\theoremstyle{plain}
\newtheorem{thm}{Theorem}[section]
\newtheorem{thm*}{Theorem}[section]
\newtheorem{cor}[thm]{Corollary}
\newtheorem{prop}[thm]{Proposition}
\newtheorem{lemma}[thm]{Lemma}
\newtheorem{lemma*}{Lemma}
\theoremstyle{definition}
\newtheorem{defn}[thm]{Definition}
\newtheorem{remark}[thm]{Remark}
\newtheorem{question}[thm] {Question}
\newtheorem*{remark*}{Remark}
\newtheorem{ex}[thm]{Example}
\newtheorem{question*}{Question}
\numberwithin{equation}{thm}
\newcommand{\bM}{\mathbb M}
\newcommand{\bB}{\mathbb B}
\newcommand{\cM}{\mathcal M}
\newcommand{\cN}{\mathcal N}
\newcommand{\bG}{\mathbb G}
\newcommand{\cO}{\mathcal O}
\newcommand{\bZ}{\mathbb Z}
\newcommand{\bF}{\mathbb F}
\newcommand{\cC}{\mathcal C}
\newcommand{\cK}{\mathcal K}
\newcommand{\cI}{\mathcal I}
\newcommand{\cE}{\mathcal E}
\newcommand{\fg}{\mathfrak g}
\newcommand{\fh}{\mathfrak h}
\newcommand{\ol}{\overline}
\newcommand{\ul}{\underline}
\def\sl2{\operatorname{SL_{2(2)}}\nolimits}
\def\Ga2{\operatorname{\mathbb G_{\rm a(2)}}\nolimits}
\def\GL{\operatorname{GL}\nolimits}
\newcommand{\bH}{\mathbb H}
\newcommand{\bN}{\mathbb N}
\newcommand{\bU}{\mathbb U}
\newcommand{\bu}{\bullet}
\date\today
\begin{document}

 \title[Infinite dimensional modules for linear algebraic groups]
 {Infinite dimensional modules for linear algebraic groups} 
 
 \author[ Eric M. Friedlander]
{Eric M. Friedlander$^{*}$} 

\address {Department of Mathematics, University of Southern California,
Los Angeles, CA 90089}
\email{ericmf@usc.edu}

\thanks{$^{*}$ partially supported by the Simons Foundation }

\subjclass[2020]{20G05, 20G10}

\keywords{filtrations, coalgebras, mock injectives}

\begin{abstract}
We investigate infinite dimensional modules for a linear algebraic group
$\mathbb G$ over a field of positive characteristic $p$. 
For any subcoalgebra $C \subset \mathcal O(\mathbb G)$ of the coordinate algebra
of $\mathbb G$, we consider the abelian subcategory
$CoMod(C) \subset Mod(\mathbb G)$ and the left exact functor
$(-)_C: Mod(\mathbb G) \to CoMod(C)$ that is right adjoint to the inclusion functor. 
The class of cofinite $\mathbb G$-modules is formulated using finite dimensional
 subcoalgebras of $\mathcal O(\mathbb G)$ and the new invariant of ``cofinite type"
 is introduced.
 
 We are particularly interested in mock injective $\bG$-modules, $\bG$-modules which
 are not seen by earlier support theories.  Various properties of these ghostly
 $\bG$-modules are established.  The stable category $StMock(\bG)$ 
 is introduced, enabling mock injective $\bG$-modules to fit into the framework of  
 tensor triangulated categories.
\end{abstract}

\maketitle


\section{Introduction}

An approach to studying a $\bG$-module $M$ for a connected affine group scheme $\bG$
over a field $k$ of characteristic $p > 0$ is to investigate the restriction of $M$ to Frobenius 
kernels $\bG_{(r)}$ of $\bG$.   From some
points of view, the representation theory of finite group schemes such as $\bG_{(r)}$ resembles
the representation theory of finite groups and thus shares many useful properties.   The technique
of restricting $\bG$-modules to Frobenius kernels has been effective in studying irreducible modules
and standard finite dimensional modules for reductive groups $\bG$ (see, for example, \cite{J}). 
One method for studying the representations of finite group schemes $G$ involves constructing a 
suitable theory of ``supports" for $G$-modules.  This method, beginning with the
consideration of support varieties for elementary abelian 
$p$-groups (see \cite{C}), has been developed for the representation theory
of  finite group schemes (see, for example, \cite{FP2}), linear algebraic groups, various finite dimensional 
algebras (see, for example, \cite{BIKP}, \cite{F-N2}, \cite{NP}), and linear algebraic groups (see \cite{F15}).   

Some aspects of the representation theory of $\bG$ are not seen by support varieties
or by restriction of $\bG$-modules to Frobenius kernels.  Of particular importance are proper mock 
injective modules, modules which are not injective as $\bG$-modules but whose restrictions to 
each Frobenius kernel $\bG_{(r)}$ is an injective $\bG_{(r)}$-module \cite{F18}.
Using the lens of ``stable categories", we showed in \cite{F23} that localizing the category
of bounded cochain complexes $\cK^b(Mod(\bG))$ of $\bG$-modules with 
respect to the bounded derived category of mock injective $\bG$-modules yields a category
(which we denote by $\ol{StMod}(\bG)$ in this paper) which serves as a good
analogue for linear algebraic groups of stable module categories for finite group schemes. 
Indeed, our motivation for this work has been to further our understanding of 
mock injective $\bG$-modules.  In Section \ref{sec:stable}, we formulate some of this
understanding in terms of the thick 
subcategory $StMock(\bG) \hookrightarrow StMod(\bG)$ of bounded complexes associated
to mock injective $\bG$-modules.

In \cite{F18}, we considered linear algebraic groups of exponential type and utilized
the filtration by ``exponential degree" of a $\bG$-module $M$. 
This suggested that a useful approach to studying a $\bG$-module $M$ is to consider
filtrations by $\bG$-submodules whose coaction is constrained to specified subspaces 
of $\cO(\bG)$.   In this paper, we continue this analysis, providing numerous functorial
filtrations of $\bG$-modules for an arbitrary linear algebraic group $\bG$ each of which 
is a filtration by $\bG$-modules (not necessarily the case for the filtration of \cite{F18}).

In Theorem \ref{thm:M-X}, we associate to an arbitrary subspace (i.e., an arbitrary $k$-vector 
subspace) $X \subset \cO(\bG)$ an abelian subcategory 
$i_X: Mod(\bG,X) \hookrightarrow Mod(\bG)$ of the abelian category of $\bG$-modules
 together with a left exact functor $(-)_X: Mod(\bG) \ \to \ Mod(\bG,X)$ that is right adjoint to $i_X$.
 Essentially by construction, $M_X$ is the largest $\bG$-submodule of $M$ whose coaction
 $\Delta_M: M \to M\otimes \cO(\bG)$ factors through $M\otimes X \subset M\otimes \cO(\bG)$.
 We belatedly realized that our construction is an abstraction of J. Jantzen's  ``truncated categories"
 for reductive groups (see \cite[Chap A]{J}).

In Proposition \ref{prop:X-injectivity}, we show for any ascending, converging sequence $\{ X_d \}$
of subspaces of $\cO(\bG)$  that a $\bG$-module $M$  is injective if and only if each $M_{X_d}$ 
is injective in $Mod(\bG,X_d)$.
For $\bG$ of exponential type equipped with the subspaces $\cE(d) \subset \cO(\bG)$ considered in \cite{F18}, 
Proposition \ref{prop:exp-degree} establishes that the filtration $\{ M_{\cE(d)} \}$  of a $\bG$-module
$M$ is the coarsest filtration by $\bG$-submodules subordinate to the filtration 
by exponential degree of $M$ as considered in \cite{F18}.

If a given subspace $X \subset \cO(\bG)$ is the underlying subspace of a subcoalgebra $C$,
then $Mod(\bG,X) \ \subset \ Mod(\bG)$ is naturally identified with $CoMod(C)$, the abelian category
of comodules for $C$.  For a given subspace $X \subset \cO(\bG)$, we consider the smallest 
subcoalgebra  $\cO(\bG)_{\langle X \rangle} \subset \cO(\bG)$
containing $X$; if $X$ is finite dimensional, then so is $\cO(\bG)_{\langle X \rangle}$.   For our
purposes, it is sufficient to consider filtrations of $\bG$-modules provided by an
ascending, converging sequence of 
finite dimensional subcoalgebras of $\cO(\bG)$ such as $\{ \langle X_d \rangle \}$ associated to an 
ascending, converging sequence of finite dimensional subspaces $\{ X_d  \}$
of $\cO(\bG)$.
In Definition \ref{defn:filt-GL}, we provide another construction of ascending, converging 
finite dimensional subcoalgebras $\{ \cO(\bG)_{\leq d,\phi} \}$ of $\cO(\bG)$ 
by first explicitly defining $\{ \cO(GL_N)_{\leq d} \}$ and then using a specified closed embedding 
$\phi: \bG \hookrightarrow GL_N$.  As seen in Proposition \ref{prop:diff-emb}, different closed 
embeddings $\bG$ into general linear groups determine ``comparable" filtrations of $\bG$-modules.   
For a given $\bG$-module $M$, Proposition \ref{prop:adjoints} compares and contrasts 
the filtration $\{ M_{\leq d,\phi} \}$ with the family of restrictions of $\{ M_{|\bG_{(r)}} \}$ to 
Frobenius kernels of $\bG$.   

We say that a $\bG$-module $M$ is ``cofinite" if $M_X$ is finite dimensional for every
finite dimensional subspace $X \subset \cO(\bG)$.  The full subcategory $CoFin(\bG)$
of $Mod(\bG)$ consisting of cofinite $\bG$-modules has various closures properties
(see Proposition \ref{prop:cofinite}) but does not contain all cokernels and is not closed
under tensor products.  
In Definition \ref{defn:poly-growth}, we formulate the ``growth" of cofinite modules; a finer 
invariant, the ``cofinite type" of a $\bG$-module $M$ (denoted 
$\gamma(\bG,\phi)_M)$) is introduced which depends upon a choice of closed embedding
$\phi: \bG \hookrightarrow GL_N$.  Various computations of $\gamma(\bG,\phi)_M$ are given.

In Section \ref{sec:mock}, we investigate mock injective $\bG$-modules.  As observed in
Proposition \ref{prop:property-mock}, the full subcategory
$Mock(\bG)$ of $Mod(\bG)$ whose objects are mock injective $\bG$-modules is an 
exact subcategory with enough injective objects, arbitrary directed colimits, and the ``two out of three"
property.   Moreover, restriction along a closed embedding $\bH \hookrightarrow \bG$ 
determines an exact functor $(-)_\bH: Mock(\bG) \to Mock(\bH)$ and tensor product in
$Mod(\bG)$ determines $\otimes: Mock(\bG) \otimes Mod(\bG) \to Mock(\bG)$.
On the other hand, $Hom_\bG(-,-): Mock(\bG)\times mod(\bG) \to Mod(\bG)$ is typically the 0-pairing
by Theorem \ref{thm:mock-quotients}; in other words, for familiar $\bG$, $Hom_{\bG}(J,M) = 0$
if $J$ is a mock injective $\bG$-module and $M$ is a finite dimensional $\bG$-module.  
Examples \ref{ex:Ga-mock} and \ref{ex:UN-mock} give sample computations 
of the cofinite type of classes of cofinite proper mock injective $\bG$-modules.   Theorem \ref{thm:HNS}
presents a construction due to Hardesty, Nakano, and Sobaje in \cite{HNS} of familes
of mock injective $\bG$-modules

Theorem \ref{thm:categories} presents a ``stable" categorification of this construction,
one which considers triangulated categories in which bounded complexes of injectives 
are set equal to 0.
This formulation involves the tensor triangulated category $StMock(\bG)$, the kernel of the 
quotient functor from $StMod(\bG) \equiv \cK^b(Mod(\bG))/\cI nj(\bG) \to \ol{StMod}(\bG)$.

Before mentioning some open questions in Section \ref{sec:questions}, we provide in 
Theorem \ref{thm:Ga-detect} a necessary and sufficient condition in terms of $\pi$-points 
for a $\bG_a$-module to be injective.

Throughout this paper, the ground field $k$ is assumed to be of characteristic $p > 0$ for 
some prime $p$.  We use $\cO(\bG)$ to denote the coordinate algebra of $\bG$, and $k[\bG]^R$
(respectively, $k[\bG]^L$) the underlying vector space of $\cO(\bG)$ with the right 
(resp., left) regular representation.  For us, an affine group scheme is an affine group scheme 
over $k$ which is of
finite type over $k$; a linear algebraic group is a smooth and connected affine group scheme.  
A closed embedding of affine group schemes will always mean a closed
immersion which is a morphism of group schemes.

We thank Paul Balmer, Bob Guralnick, Julia Pevtsova, Paul Sobaje, and especially 
Cris Negron for helpful insights. 

\vskip .2in


\section{Filtrations by subspaces $X \subset \cO(\bG)$}
\label{sec:X-comodules}

For an affine group scheme $\bG$, we denote by $Mod(\bG)$ the
abelian category of $\bG$-modules; more precisely, the abelian category of 
comodules for $\cO(\bG)$ as a coalgebra over $k$.  Thus, $M \in Mod(\bG)$
is a vector space over $k$ equipped with a right coaction $\Delta_M: M \ \to \  M\otimes \cO(\bG)$
which determines natural (with respect to commutative $k$-algebras
$A$) $A$-linear group actions $\bG(A) \times (A \otimes M) \to (A\otimes M)$.   Unless 
specified otherwise, tensor products are implicitly assumed to be tensor products of 
$k$-vector spaces.

\vskip .1in
\begin{defn}
\label{defn:X-comodule}
Let $i_X: X \subset \cO(\bG)$ be a subspace (that is, a $k$-subspace of $\cO(\bG)$ viewed
as a $k$-vector space).
We define \ $Mod(\bG,X)$ \ to be the full subcategory of $Mod(\bG)$ whose objects
are those $\bG$-modules $M$ whose coaction $\Delta_M$ factors as
$(id_M \otimes i_X) \circ \Delta_{M,X}: M \to M\otimes X \to M \otimes \cO(\bG)$.

We refer to such $\bG$-modules as ``$X$-comodules".
\end{defn}

\vskip .1in

We utilize the following lemma investigating the closure properties of
$Mod(\bG,X) \ \subset Mod(\bG)$.

\begin{lemma}
\label{lem:ker-coker}
Let $X \subset \cO(\bG)$ be a subspace and let $M$ be an $X$-comodule.
\begin{enumerate}
\item
If $j: N \ \to \ M$ is an injective map of $\bG$-modules, then $N$ is also an $X$-comodule.
\item
If $q: M \to Q$ is a surjective map of $\bG$-modules, then  $Q$ is also an $X$-comodule.
\item
If $f: M \to N$ is a map of $\bG$-modules with $N$ an $X$-comodule, then 
the kernel and cokernel of $f$ are $X$-comodules.
\end{enumerate}
\end{lemma}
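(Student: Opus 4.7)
The plan is to verify parts (1) and (2) directly from Definition \ref{defn:X-comodule} and then to deduce (3) formally from them. The single substantive ingredient is the exactness of $-\otimes_k \cO(\bG)$, which holds because $k$ is a field.

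For (1), the $\bG$-equivariance of $j$ gives the naturality identity $(j\otimes \mathrm{id}_{\cO(\bG)})\circ \Delta_N = \Delta_M \circ j$, whose right-hand side factors through $M\otimes X$ by hypothesis on $M$. It therefore suffices to show that any $v\in N\otimes\cO(\bG)$ whose image under $j\otimes\mathrm{id}_{\cO(\bG)}$ lies in $M\otimes X$ must itself lie in $N\otimes X$. Choosing a basis $\{x_\alpha\}$ of $X$ and extending it to a basis $\{x_\alpha\}\cup\{y_\beta\}$ of $\cO(\bG)$, one writes $v = \sum n_\alpha\otimes x_\alpha + \sum n_\beta'\otimes y_\beta$; the hypothesis forces $\sum j(n_\beta')\otimes y_\beta = 0$, and linear independence together with injectivity of $j$ yields $n_\beta' = 0$ for every $\beta$. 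Hence $\Delta_N$ factors through $N\otimes X$, which is exactly what it means for $N$ to be an $X$-comodule.

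For (2), the naturality relation $(q\otimes \mathrm{id}_{\cO(\bG)})\circ \Delta_M = \Delta_Q \circ q$ combined with the evident inclusion $(q\otimes \mathrm{id})(M\otimes X)\subseteq Q\otimes X$ shows $\Delta_Q\circ q$ lands in $Q\otimes X$; surjectivity of $q$ then forces $\Delta_Q$ itself to land in $Q\otimes X$. Part (3) is then immediate: $\ker(f)\hookrightarrow M$ is an injective map of $\bG$-modules into the $X$-comodule $M$, so $\ker(f)$ is an $X$-comodule by (1), while $\mathrm{coker}(f) = N/\mathrm{im}(f)$ is a surjective quotient of the $X$-comodule $N$ and is therefore an $X$-comodule by (2). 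There is no serious obstacle in this argument; the one point that merits brief care is the basis computation in (1), which is essentially the identity $(j\otimes \mathrm{id}_{\cO(\bG)})^{-1}(M\otimes X) = N\otimes X$.
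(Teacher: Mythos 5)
Your proposal is correct and follows essentially the same route as the paper: part (1) by a basis-of-$\cO(\bG)$-adapted-to-$X$ argument (you work with $(j\otimes\mathrm{id})^{-1}(M\otimes X)=N\otimes X$ using injectivity of $j$, while the paper additionally adapts a basis of $M$ to $N$ after identifying $N$ with its image; the two are equivalent), part (2) by naturality of the coaction plus surjectivity (your version is slightly more direct, bypassing the paper's three-row diagram through the kernel), and part (3) as an immediate formal consequence of (1) and (2), exactly as in the paper.
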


\begin{proof}
To prove (1), choose a basis $\{ m_\beta, \beta \in I \}$ of $M$ such 
that a subset of this basis is a basis for $N$, and choose a basis $\{ f_\alpha, \alpha \in A \}$ 
of $\cO(\bG)$ such that a subset of this basis is a basis for $X$.
If $m \in M$ is an element of $N$, then $\Delta(m) = \sum_{\beta,\alpha} a_{\beta,\alpha} m_\beta \otimes f_\alpha$
lies both in $N\otimes k[\bG]$ so that $a_{\beta,\alpha} = 0$  unless $m_\beta \in N$ and lies
in $M \otimes X$ so that $a_{\beta,\alpha} = 0$   unless $f_\alpha \in X$. Thus $\Delta(m) \in N\otimes X$
if $m \in N$.

To prove assertion (2), let $j: K \to M$ be the kernel of the surjective map $q: M \to Q$.
Using  assertion (1), we have
the commutative diagram 
\begin{equation}
\label{eqn:q}
\xymatrix{
K \ar[d]_j \ar[r]^-{\Delta_{K,X}} & K \otimes X\ar[r]^-{id_K \otimes i_X}   \ar[d]^{j\otimes id} & 
K \otimes \cO(\bG) \ar[d]^{j\otimes id_{\cO(\bG]}} \\
M \ar[d]_q \ar[r]^-{\Delta_{M,X}} & M \otimes X \ar[r]^-{id_M \otimes i_X}   \ar[d]^{q\otimes id_X} &
 M \otimes \cO(\bG) \ar[d]^{q\otimes id_{\cO(\bG)}} \\
Q  \ar@/_/[rr]_{\Delta_Q} & Q \otimes X   \ar[r]^-{id_Q \otimes i_X} & Q \otimes \cO(\bG).
} 
\end{equation} 
A simple diagram chase for (\ref{eqn:q}) implies that  
$\Delta_Q$ factors uniquely through $Q\otimes X$.

To prove (3), observe that the kernel  $ker\{ f \} \subset M $ is an $X$-comodule by (1) 
and that the quotient $N \twoheadrightarrow coker\{ f \}$ is an $X$-comodule by (2).
\end{proof}

\vskip .1in

We recall that the sum $M_1+M_2\  \subset \ M$ of $\bG$-submodules $M_1, M_2$ of $M$
is the image of $M_1 \oplus M_2 \to M$.

\begin{prop}
\label{prop:colimit}
Let $M$ be a $\bG$-module and $X \subset \cO(\bG)$ be a subspace.  
If $M_1 \subset M, \ M_2 \subset M$ are 
$\bG$-submodules which are $X$-comodules, then $M_1 + M_2 \subset M$
is also an $X$-comodule.
Thus, the category $\chi(M)$ whose objects are $X$-comodules of $M$
and whose maps are inclusions of $\bG$-submodules of $M$ is a filtering subcategory
of $Mod(\bG)$.

Consequently,
\begin{equation}
\label{eqn:bracket-X}
M_X\ \equiv \ \varinjlim_{N \in \chi(M)} N \quad = \ \bigcup_{N \in \chi(M)} N \ \subset \ M
\end{equation}
is well defined as a $\bG$-submodule.  Moreover, \ $M_X  \subset M$ \ is an $X$-comodule,
the largest $X$-comodule contained in $M$.
\end{prop}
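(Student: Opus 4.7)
The plan is to leverage Lemma \ref{lem:ker-coker} to establish closure of the family of $X$-comodules inside $M$ under finite sums, and then to pass to the colimit by the elementary observation that an element of a filtered union lies in one of its members.

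First, I would handle the claim that $M_1 + M_2$ is an $X$-comodule. The direct sum $M_1 \oplus M_2$ is visibly an $X$-comodule: its coaction is the direct sum of the coactions of $M_1$ and $M_2$, each of which factors through $(-)\otimes X$, so the direct sum factors through $(M_1 \oplus M_2) \otimes X$. The canonical map $M_1 \oplus M_2 \to M$ has image exactly $M_1 + M_2$, hence the surjection $M_1 \oplus M_2 \twoheadrightarrow M_1 + M_2$ of $\bG$-modules together with Lemma \ref{lem:ker-coker}(2) implies that $M_1 + M_2$ is an $X$-comodule.

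Next, this closure under binary sums immediately makes $\chi(M)$ a filtering subcategory: given any two objects $N_1, N_2 \in \chi(M)$, the object $N_1 + N_2 \in \chi(M)$ receives inclusions from both, and the only morphisms in $\chi(M)$ are inclusions, so any diagram in $\chi(M)$ admits a cocone.

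Finally I would prove that $M_X := \bigcup_{N \in \chi(M)} N$ is an $X$-comodule, and is maximal. As a directed union of $\bG$-submodules of $M$, it is itself a $\bG$-submodule. The coaction condition is checked element by element: for any $m \in M_X$, pick $N \in \chi(M)$ with $m \in N$; then $\Delta_M(m) = \Delta_N(m)$ lies in $N \otimes X \subset M_X \otimes X$ inside $M \otimes \cO(\bG)$. Hence $\Delta_M$ restricted to $M_X$ factors through $M_X \otimes X$, so $M_X \in \chi(M)$. Since every $X$-comodule $N \subset M$ is by definition an object of $\chi(M)$ and thus contained in the union, $M_X$ is the (necessarily unique) largest $X$-comodule inside $M$. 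The only real content of the argument is Lemma \ref{lem:ker-coker}(2); once that is in hand, the rest is routine diagram chasing and a one-line verification at the level of elements, so I do not anticipate a genuine obstacle.
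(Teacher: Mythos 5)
Your proof is correct and follows essentially the same route as the paper: both reduce the binary-sum case to Lemma \ref{lem:ker-coker}(2) via the surjection $M_1 \oplus M_2 \twoheadrightarrow M_1 + M_2$, and both conclude filteredness the same way. The only (cosmetic) difference is in the last step, where the paper invokes the commutation of $(-)\otimes V$ with filtered colimits abstractly, while you verify the same thing element by element; these are equivalent.
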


\begin{proof}
Recall that $M_1 + M_2 \subset M$ fits in a short exact sequence of $\bG$-modules
\ $0 \to \ M_1\cap \ M_2 \ \to \ M_1\oplus M_2 \ \to \ M_1+M_2 \to 0$.
Since $M_1 \oplus M_2$ is clearly an $X$-comodule whenever $M_1, M_2$ are $X$-comodules,
the first assertion follows from Lemma \ref{lem:ker-coker}.

This implies that the category $\chi(M)$ is filtering;  given two objects $N_1 \subset M$ and $N_2 \subset M$
of $\chi(M)$, both map to $N_1+N_2 \subset M$ which is an object
of $\chi(M)$.   Thus, $\varinjlim_{N \in \chi(M)} N \ \to M$  
equals  the union \ $\bigcup_{N \in \chi(M)} N  \ \subset \ M$.    Recall that $(-) \otimes V$ for a 
given vector space $V$ commutes with filtered colimits of $k$-vector spaces.  Consequently, 
$$\varinjlim_{N \in \chi(M)} \Delta_N: \varinjlim_{N \in \chi(M)} N \ \to \ 
\varinjlim_{N \in \chi(M)} (N \otimes \cO(\bG]) \ = \ (\varinjlim_{N \in \chi(M)} N) \otimes \cO(\bG)$$ 
factors through 
$ \varinjlim_{N \in \chi(M)} N \to (\varinjlim_{N \in \chi(M)} N) \otimes X$.  In other words,
 $M_X$ is an $X$-comodule, the largest $X$-comodule contained in $M$.
\end{proof}

\vskip .1in

Theorem \ref{thm:M-X} introduces the functor $(-)_X: Mod(\bG) \quad \to \quad Mod(\bG,X)$
right adjoint to the natural embedding.

\begin{thm}
\label{thm:M-X}
Let $\bG$ be an affine group scheme of finite type over $k$ and let $i_X: X \subset \cO(\bG)$ be a subspace. 
Denote by  
$$i_{X*}: Mod(\bG,X) \quad \hookrightarrow \quad Mod(\bG)$$
the full subcategory of $Mod(\bG)$ whose objects are $X$-comodules.
\begin{enumerate}
\item
$Mod(\bG,X)$ is an abelian subcategory which is closed under filtering colimits.
\item
Sending a $\bG$-module $M$
to the $\bG$-submodule $M_X$ of $M$ as in (\ref{eqn:bracket-X}) determines a functor
$$(-)_X: Mod(\bG) \quad \to \quad Mod(\bG,X).$$ 
\item
$(-)_X$ is left exact and is right adjoint to the embedding functor $i_{X*}: Mod(\bG,X) \to Mod(\bG)$.
\end{enumerate}
\end{thm}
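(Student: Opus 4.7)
The plan is to use Proposition \ref{prop:colimit} and Lemma \ref{lem:ker-coker} to verify each part in turn, doing (1) directly, (2) by a short image argument, and (3) by identifying the unit and counit explicitly.

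\emph{Part (1).} To show $Mod(\bG,X)$ is an abelian subcategory of $Mod(\bG)$, I would check that kernels, cokernels, and finite direct sums taken in $Mod(\bG)$ of morphisms between $X$-comodules remain $X$-comodules. Kernels and cokernels are covered by Lemma \ref{lem:ker-coker}(3); for finite direct sums the coaction on $M_1 \oplus M_2$ plainly factors through $(M_1 \oplus M_2) \otimes X$ if each $\Delta_{M_i}$ factors through $M_i \otimes X$. For closure under filtering colimits, I would repeat the argument used in Proposition \ref{prop:colimit}: given a filtered system $\{N_\alpha\}$ of $X$-comodules, the functor $(-) \otimes X$ commutes with filtered colimits of $k$-vector spaces, so the induced coaction on $\varinjlim N_\alpha$ factors through $(\varinjlim N_\alpha) \otimes X$.

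\emph{Part (2).} Given $f: M \to N$ in $Mod(\bG)$, the idea is to show $f(M_X) \subseteq N_X$, and then define $f_X$ as the restriction of $f$. The image $f(M_X) \subseteq N$ is a $\bG$-submodule and a surjective image of the $X$-comodule $M_X$, so by Lemma \ref{lem:ker-coker}(2) it is itself an $X$-comodule. Hence $f(M_X)$ is an object of the category $\chi(N)$ of Proposition \ref{prop:colimit}, and so is contained in $N_X = \bigcup_{L \in \chi(N)} L$. Functoriality in $f$ is then immediate.

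\emph{Part (3).} For the adjunction I would exhibit the unit and counit explicitly. The counit $\varepsilon_M: i_{X*}(M_X) \hookrightarrow M$ is the inclusion. For the unit, if $N$ is already an $X$-comodule, then $N \in \chi(i_{X*}N)$ and is maximal there, so $(i_{X*}N)_X = N$, giving $\eta_N = \mathrm{id}_N$. To verify the universal property, let $g: i_{X*}N \to M$ be a morphism in $Mod(\bG)$ with $N$ an $X$-comodule. By Lemma \ref{lem:ker-coker}(2), $g(N) \subseteq M$ is an $X$-comodule, so $g(N) \subseteq M_X$, and $g$ factors uniquely as $i_{X*}(\tilde g) \circ \eta_N$ for a unique $\tilde g: N \to M_X$ in $Mod(\bG,X)$. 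This is exactly the adjunction bijection $\mathrm{Hom}_{Mod(\bG)}(i_{X*}N, M) \xrightarrow{\sim} \mathrm{Hom}_{Mod(\bG,X)}(N, M_X)$, and naturality in $N$ and $M$ is a direct check. Left exactness of $(-)_X$ then follows formally from being a right adjoint; alternatively it can be seen directly using that for any $\bG$-submodule $K \subseteq M$ one has $K_X = K \cap M_X$ (the inclusion $K \cap M_X \subseteq K_X$ comes from Lemma \ref{lem:ker-coker}(1) applied to $K \cap M_X \hookrightarrow M_X$, and the reverse from maximality of $K_X$ inside $K$).

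The only real content is the observation that images of $X$-comodules under $\bG$-module maps remain $X$-comodules; this is the Lemma \ref{lem:ker-coker}(2) step reused in both (2) and (3). I do not expect a substantial obstacle, since all closure properties needed are already recorded in Lemma \ref{lem:ker-coker} and Proposition \ref{prop:colimit}; the argument is essentially a packaging of those results into categorical language.
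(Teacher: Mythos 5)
Your proof is correct and follows essentially the same route as the paper: Lemma \ref{lem:ker-coker} and Proposition \ref{prop:colimit} do all the work, with the key step in both (2) and (3) being that images of $X$-comodules under $\bG$-module maps are again $X$-comodules. The only organizational difference is that you establish the adjunction first (via explicit unit and counit) and then deduce left exactness formally as a consequence of being a right adjoint, whereas the paper proves left exactness directly from the union description of $M_X$ and then records the adjunction bijection via the restriction/inclusion maps; your version also spells out the identity $K_X = K \cap M_X$ that the paper leaves implicit.
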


\begin{proof}
The fact that $Mod(\bG,X)$ is an abelian subcategory of $Mod(\bG)$ follows directly from
Lemma \ref{lem:ker-coker}.    To prove that  $Mod(\bG,X)$ is closed under colimits indexed
by a filtering category $I$, observe that the natural map $\varinjlim_i (M_i \otimes X) \ \to \
\varinjlim_i (M_i) \otimes X$ is an isomorphism.  Thus, if each $M_i$ is an $X$-comodule, 
so is $\varinjlim_i (M_i)$.

To prove functoriality of $(-)_X$, observe that if $f: M \to N$ is a map in $Mod(\bG)$ 
then $f(M_X) \ \subset \ N$ is
contained in $N_X$  by Lemma \ref{lem:ker-coker}(2) and the equality 
$N_X   \ = \ \bigcup_{N^\prime \in \chi_M} N^\prime \ \subset N$ of (\ref{eqn:bracket-X}).  
This equality also shows that $(-)_X$ is left exact.

Functoriality together with (1.3.1) determines the natural inclusion
$$Hom_{Mod(\bG)}(i_{X*}(M),N) \quad \hookrightarrow \quad Hom_{Mod(\bG,X)}(M,N_X)$$
inverse to the  inclusion $Hom_{Mod(\bG,X)}(M,N_X) \ \hookrightarrow \ Hom_{Mod(\bG)}(i_{X*}(M),N)$
and thus a bijection.  This is the natural isomorphism of the asserted adjunction.
\end{proof}

\vskip .1in

Perhaps the simplest example of the functor $(-)_X: Mod(\bG) \quad \to \quad Mod(\bG,X)$
is the case in which $X = k\cdot 1$, the span of $1 \in \cO(\bG)$.  In this case,
$(-)_X \ = \ H^0(\bG,-)$.  Observe that $H^0(\bG,-)$ is left exact for any $\bG$, but is not always exact.

\vskip .1in

\begin{remark}
\label{rem:not-closed}
The full abelian subcategory $Mod(\bG,X) \ \hookrightarrow \ Mod(\bG)$ is 
typically not closed under extensions.  For example, if
$Ext_\bG^1(k,k) \not= 0$, then $Mod(\bG,k\cdot 1)$ is not closed under extensions.
\end{remark}

\vskip .1in

\begin{cor}
\label{cor:enough}
Retain the hypotheses and notation of Theorem \ref{thm:M-X}.  
\begin{enumerate}
\item
If $I$ is an injective $\bG$-module, then $I_X$ is an injective object of $Mod(\bG,X)$.
\item
Similarly, if $J$ is an injective object of $Mod(\bG,X)$ and $Y \subset X$ is a subspace,
then $J_Y$ is an injective object of $Mod(\bG,Y)$.
\item
If $M$ is an $X$-comodule with a given embedding  $M \hookrightarrow I$ into an 
injective $\bG$-module, then $M \hookrightarrow I_X$ in $Mod(\bG,X)$ 
is an embedding of $M$ in an injective object of $Mod(\bG,X)$.
\end{enumerate}
In particular, the abelian category $Mod(\bG,X)$  has ``enough injectives".
\end{cor}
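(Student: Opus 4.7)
The plan is to deduce all three statements from the adjunction of Theorem \ref{thm:M-X}(3) together with the elementary observation that the inclusion $i_{X*}: Mod(\bG,X) \hookrightarrow Mod(\bG)$ is an \emph{exact} functor. Exactness is immediate from Lemma \ref{lem:ker-coker}: a short exact sequence in $Mod(\bG,X)$ is, by part (3) of that lemma, simply a short exact sequence of $\bG$-modules whose terms happen to be $X$-comodules. With this in hand, (1) becomes an instance of the general principle that a right adjoint to an exact functor preserves injectives. Concretely, given an injection $0 \to A \to B$ in $Mod(\bG,X)$ and a map $A \to I_X$, the adjunction rewrites the latter as a map $i_{X*}A \to I$ in $Mod(\bG)$; exactness of $i_{X*}$ keeps $0 \to i_{X*}A \to i_{X*}B$ exact, so injectivity of $I$ over $\bG$ produces an extension $i_{X*}B \to I$, which the adjunction then sends back to the required extension $B \to I_X$.

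For (2) I would run exactly the same argument with $Mod(\bG)$ replaced by $Mod(\bG,X)$ and $Mod(\bG,X)$ replaced by $Mod(\bG,Y)$. What must be checked is that $(-)_Y$ restricted to $Mod(\bG,X)$ still serves as right adjoint to the inclusion $Mod(\bG,Y) \hookrightarrow Mod(\bG,X)$, and that this inclusion is exact. Both points are formal: exactness follows again from Lemma \ref{lem:ker-coker}, and the intrinsic characterization of $N_Y$ in Proposition \ref{prop:colimit} as the largest $Y$-comodule contained in $N$ is manifestly independent of whether $N$ is viewed inside $Mod(\bG)$ or inside the smaller category $Mod(\bG,X)$. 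Hence the very same functor $(-)_Y$ provides the right adjoint in each of the nested settings, and the argument of (1) applies verbatim.

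For (3), if $M$ is an $X$-comodule embedded in an injective $\bG$-module $I$, then $M$ is in particular an $X$-comodule $\bG$-submodule of $I$; by the maximality characterization of $I_X$ given in Proposition \ref{prop:colimit} the embedding $M \hookrightarrow I$ therefore factors through $M \hookrightarrow I_X$. Combined with (1), this places $M$ inside an injective object of $Mod(\bG,X)$. The final ``enough injectives'' assertion then drops out: given any $M \in Mod(\bG,X)$, embed the $\bG$-module $i_{X*}M$ into an injective $\bG$-module $I$ (using that $Mod(\bG)$ has enough injectives, since coalgebra comodule categories are Grothendieck) and apply (3).

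I do not anticipate any real obstacle; the corollary is a formal consequence of the adjunction of Theorem \ref{thm:M-X} together with Lemma \ref{lem:ker-coker}. The only mildly delicate point, which I would be careful to spell out, is that the same formula $N \mapsto N_X$ simultaneously defines the right adjoint to every one of the nested inclusions appearing in (1) and (2), so that no reverification of the adjunction is needed when passing from the ambient category $Mod(\bG)$ to the intermediate category $Mod(\bG,X)$.
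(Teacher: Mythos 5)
Your proof is correct and follows essentially the same route as the paper's: all three parts rest on the observation that the inclusion $i_{X*}$ (respectively $i_{X,Y,*}$) is a left-exact left adjoint of $(-)_X$ (respectively $(-)_Y$ restricted to $Mod(\bG,X)$), so that the right adjoint preserves injectives, with (3) obtained from (1) and the functoriality/maximality of $(-)_X$. You spell out the adjunction diagram chase more explicitly and add the (implicit in the paper) last step for ``enough injectives,'' but the underlying argument is identical.
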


\begin{proof}
If $I$ is an injective $\bG$-module, then the fact that $(-)_X$ has a left adjoint (namely, $i_{X*}(-)$)
which is left exact implies that $I_X$ is an injective object of $Mod(\bG,X)$.   To prove (2), observe
that the embedding $i_{X,Y,*}: Mod(\bG,Y) \hookrightarrow Mod(\bG,X)$ is left adjoint to the 
restriction of $(-)_Y$ to $Mod(\bG,X)$ so that the argument using the existence of a left exact
left adjoint applies to prove (2).
Assertion (3) follows from (1) together with 
the left exactness of $(-)_X$ and the fact that $(-)_X$ restricts to the identify on $Mod(\bG,X)$.
\end{proof}

\vskip .1in

We say a sequence of subspaces \ $\{ X_i \}$ of $\cO(\bG)$ indexed by the non-negative
numbers $i \geq 0$ is an {\it ascending, converging sequence of subspaces} of $\cO(\bG)$
if $X_i \subset X_{i+1}$ for all $i \geq 0$ and if \
 $\bigcup_{i \geq 0} X_i \ = \  \cO(\bG)$.

\begin{prop}
\label{prop:exhaust-X}
Let $\bG$ be an affine group scheme of finite type over $k$ and let $\{ X_i \}$
be an ascending, converging sequence of subspaces of $\cO(\bG)$.
Sending a $\bG$-module $M$ to the sequence of $\bG$-submodules
\begin{equation}
\label{eqn:X-seq}
M_{X_0} \ \subset \ M_{X_1} \ \subset M_{X_2} \subset \cdots \ \subset \ \bigcup_{i \geq 0} M_{X_i} \ = \ M
\end{equation}
is a filtration of $M$, functorial in $M$,  with the property that each $M_{X_i}$ is an $X_i$-comodule.
We say that $\{ M_{X_i} \}$ is an ascending, converging sequence of $\bG$-submodues of $M$.

If $M$ is a finite dimensional $\bG$-module, then $M$ is an $X_i$-comodule for all $i$
sufficiently large.
\end{prop}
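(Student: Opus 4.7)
The plan is to prove the four assertions of the proposition in the following order: (i) each $M_{X_i}$ sits inside $M_{X_{i+1}}$ as a $\bG$-submodule, (ii) the assignment is functorial, (iii) every finite dimensional $\bG$-module is eventually an $X_i$-comodule, and (iv) the sequence exhausts $M$. Steps (i) and (ii) are essentially formal consequences of Theorem \ref{thm:M-X}, step (iii) is a direct computation, and step (iv) reduces to (iii) via the fundamental theorem of coalgebras.

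For (i), the key observation is that the inclusion $X_i \subset X_{i+1}$ implies that any $\bG$-module whose coaction factors through $M \otimes X_i$ also has it factor through $M \otimes X_{i+1}$, so $Mod(\bG,X_i) \hookrightarrow Mod(\bG,X_{i+1})$ as a full subcategory. In particular, $M_{X_i}$ is an $X_{i+1}$-comodule $\bG$-submodule of $M$, so by the maximality clause of Proposition \ref{prop:colimit} it is contained in $M_{X_{i+1}}$. Functoriality in (ii) is immediate from Theorem \ref{thm:M-X}(2): given $f: M \to N$ in $Mod(\bG)$, the restriction of $f$ to $M_{X_i}$ lands in $N_{X_i}$ for every $i$, yielding a map of filtered $\bG$-modules.

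For the finite dimensional assertion, I would choose a basis $n_1, \ldots, n_d$ of $M$ and expand
\[
\Delta_M(n_j) \ = \ \sum_{k=1}^d n_k \otimes f_{kj}, \qquad f_{kj} \in \cO(\bG).
\]
The $d^2$ elements $f_{kj}$ form a finite subset of $\cO(\bG) = \bigcup_i X_i$, so since $\{X_i\}$ is ascending there exists $i_0$ with $f_{kj} \in X_{i_0}$ for all $k,j$. Hence $\Delta_M$ factors through $M \otimes X_{i_0}$, showing $M$ is an $X_i$-comodule for all $i \geq i_0$; equivalently $M_{X_i} = M$ for all such $i$.

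Finally, for (iv), recall that for any $\bG$-module $M$ and any element $m \in M$, the subcomodule generated by $m$ is finite dimensional (this is the fundamental theorem of comodules applied to $\cO(\bG)$). Call it $N_m \subset M$. By step (iii) applied to $N_m$, there is an $i$ with $N_m \in Mod(\bG,X_i)$, so $N_m \subset M_{X_i}$ and in particular $m \in \bigcup_i M_{X_i}$. This gives $M = \bigcup_i M_{X_i}$. There is no real obstacle in this argument; the only point requiring care is the appeal to the finite generation of cyclic subcomodules, which justifies the reduction from the general case to the finite dimensional case.
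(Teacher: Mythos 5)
Your proof is correct and takes essentially the same route as the paper: establish the result for finite dimensional $M$ by observing that the finitely many coefficient functions $f_{kj}$ all lie in some $X_{i_0}$, then reduce the general case to the finite dimensional one via local finiteness of comodules. The paper leaves steps (i) and (ii) implicit (as formal consequences of Theorem \ref{thm:M-X} and Proposition \ref{prop:colimit}), while you spell them out; otherwise the arguments coincide.
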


\begin{proof}
If the $\bG$-module $M$ is finite dimensional, then $\Delta_M: M \to M\otimes \cO(\bG)$ must have
image in some finite dimensional subspace of $M \otimes X$ and
thus must have image contained in $M\otimes X_i$ for $i$ sufficiently large.   If $M$ 
is an arbitrary $\bG$-module, then $M$
is locally finite so that every $m \in M$ lies in some finite dimensional
$\bG$-submodule $M^\prime \subset M$  and thus must be contained in some $M_{X_i}$ as required.
\end{proof}

\vskip .1in

The following corollary is an immediate consequence of the functoriality of the filtration $M \ \mapsto \ 
\{ M_{X_i} \}$ and the fact that  $\bigcup_{i \geq 0} M_{X_i} \ = \ M$  for any $\bG$-module $M$.

\begin{cor}
\label{cor:functorial}
For any ascending, converging sequence $\{ X_i \}$ of subspaces of $\cO(\bG)$
a map \ $\phi: M \to N$ \ of $\bG$-modules is an isomorphism if and only 
if  \ $(\phi)_{X_i}: M_{X_i} \to N_{X_i}$ is an isomorphism of $\bG$-modules for all $i$.
\end{cor}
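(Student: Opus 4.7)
The plan is to prove the two directions separately, with the forward direction being immediate from functoriality and the reverse direction amounting to an element-chase using the covering property $M = \bigcup_{i\geq 0} M_{X_i}$ established in Proposition \ref{prop:exhaust-X}.

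For the forward implication, I would simply invoke the fact that $(-)_{X_i}: Mod(\bG) \to Mod(\bG,X_i)$ is a functor (Theorem \ref{thm:M-X}(2)): if $\phi$ is an isomorphism in $Mod(\bG)$, then applying $(-)_{X_i}$ to $\phi$ and its inverse produces mutually inverse morphisms in $Mod(\bG,X_i)$, so $(\phi)_{X_i}$ is an isomorphism for every $i$.

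For the reverse implication, suppose $(\phi)_{X_i}$ is an isomorphism for every $i$. To check injectivity of $\phi$, take $m \in M$ with $\phi(m) = 0$. By Proposition \ref{prop:exhaust-X}, there exists some $i$ with $m \in M_{X_i}$; since $(\phi)_{X_i}$ is the restriction of $\phi$ to $M_{X_i}$ landing in $N_{X_i}$ and is injective, we conclude $m = 0$. For surjectivity, let $n \in N$; again by Proposition \ref{prop:exhaust-X} there exists $i$ with $n \in N_{X_i}$, and surjectivity of $(\phi)_{X_i}$ produces $m \in M_{X_i}$ with $\phi(m) = (\phi)_{X_i}(m) = n$.

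There is essentially no obstacle here; the statement is a direct consequence of the naturality of the filtration together with the fact that it exhausts $M$. The only mild care required is noting that $(\phi)_{X_i}$ really is the genuine set-theoretic restriction of $\phi$ to $M_{X_i}$, which follows from the construction of $M_X$ as the union in (\ref{eqn:bracket-X}) and the inclusion $\phi(M_{X_i}) \subset N_{X_i}$ supplied by functoriality in the proof of Theorem \ref{thm:M-X}.
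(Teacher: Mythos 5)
Your proof is correct and follows exactly the argument the paper relies on: the paper states the corollary is "an immediate consequence of the functoriality of the filtration $M \mapsto \{M_{X_i}\}$ and the fact that $\bigcup_{i\geq 0} M_{X_i} = M$," which is precisely the combination of functoriality (for the forward direction) and the exhaustion from Proposition \ref{prop:exhaust-X} (for injectivity and surjectivity in the reverse direction) that you use. Your added remark that $(\phi)_{X_i}$ is the honest set-theoretic restriction of $\phi$ is a correct and worthwhile clarification, though the paper treats it as implicit.
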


\vskip .1in

We argue exactly as in the proof of \cite[Prop 4.2]{F18} to conclude the following
detection of rational injectivity of a $\bG$-module.
 
\begin{prop}
\label{prop:X-injectivity}
Consider an affine group scheme $\bG$ of finite type over $k$ and let $\{ X_i\}$
be an ascending, converging sequence  of subspaces of $\cO(\bG)$.
Then a $\bG$-module $L$ is injective  if and only if $L_{X_i}$ is
an injective object of $Mod(\bG,X_i)$ for all $i \geq 0$.
\end{prop}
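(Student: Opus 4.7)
The forward direction is immediate: if $L$ is injective in $Mod(\bG)$, then Corollary \ref{cor:enough}(1) yields that $L_{X_i}$ is injective in $Mod(\bG,X_i)$ for every $i$. The work lies in the converse.

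Assume $L_{X_i}$ is injective in $Mod(\bG,X_i)$ for every $i \geq 0$. Choose an embedding $\iota: L \hookrightarrow I$ into an injective object of $Mod(\bG)$ (these exist); it suffices to produce a $\bG$-linear retraction $r: I \to L$. Applying the left exact functor $(-)_{X_i}$ yields an embedding $\iota_i: L_{X_i} \hookrightarrow I_{X_i}$ in $Mod(\bG,X_i)$, and by Corollary \ref{cor:enough}(1), $I_{X_i}$ is injective in $Mod(\bG,X_i)$. I will construct a compatible family of $\bG$-linear splittings $r_i : I_{X_i} \to L_{X_i}$ of $\iota_i$, and then pass to the union using Proposition \ref{prop:exhaust-X}.

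The key point needed repeatedly is that, viewed as $\bG$-submodules of $I$, one has the identity $L \cap I_{X_i} = L_{X_i}$ for every $i$: the left-hand side is an $X_i$-comodule by Lemma \ref{lem:ker-coker}(1) contained in $L$, hence contained in $L_{X_i}$ by the maximality property (\ref{eqn:bracket-X}), while the reverse inclusion is automatic. In particular $L_{X_{i+1}} \cap I_{X_i} = L_{X_i}$. I then proceed inductively. For the base case, injectivity of $L_{X_0}$ in $Mod(\bG,X_0)$ provides a splitting $r_0: I_{X_0} \to L_{X_0}$ of $\iota_0$. Suppose compatible splittings $r_0, \ldots, r_i$ have been constructed with $r_{j+1}|_{I_{X_j}} = r_j$ and $r_j \circ \iota_j = \id$. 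On the $\bG$-submodule $L_{X_{i+1}} + I_{X_i} \subset I_{X_{i+1}}$ (which lies in $Mod(\bG, X_{i+1})$ by Proposition \ref{prop:colimit}), define a map into $L_{X_{i+1}}$ by the identity on $L_{X_{i+1}}$ and by $r_i$ on $I_{X_i}$; these agree on the intersection $L_{X_{i+1}} \cap I_{X_i} = L_{X_i}$ because $r_i|_{L_{X_i}} = \id$. Finally, since $L_{X_{i+1}}$ is injective in $Mod(\bG,X_{i+1})$ and $L_{X_{i+1}} + I_{X_i} \hookrightarrow I_{X_{i+1}}$ is an embedding in $Mod(\bG,X_{i+1})$, this map extends to $r_{i+1}: I_{X_{i+1}} \to L_{X_{i+1}}$, which by construction splits $\iota_{i+1}$ and restricts to $r_i$ on $I_{X_i}$.

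Because $I = \bigcup_i I_{X_i}$ and $L = \bigcup_i L_{X_i}$ by Proposition \ref{prop:exhaust-X}, the compatible family $\{r_i\}$ assembles into a $\bG$-linear map $r: I \to L$ that splits $\iota$. Hence $L$ is a direct summand of an injective $\bG$-module and is therefore itself injective.

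The main obstacle is the inductive step: one must match the candidate splittings on the intersection, and the identification $L_{X_{i+1}} \cap I_{X_i} = L_{X_i}$ together with the injectivity of $L_{X_{i+1}}$ inside the truncated category is exactly what makes the extension possible. Everything else is a formal consequence of the adjunction $(i_{X*}, (-)_X)$ of Theorem \ref{thm:M-X} and the convergence of $\{X_i\}$.
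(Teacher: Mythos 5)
Your proof is correct and uses essentially the same inductive mechanism as the paper: build compatible maps degree-by-degree along the filtration, glue on a sum of two submodules using the intersection identity ($L_{X_{i+1}}\cap I_{X_i}=L_{X_i}$, playing the role of the paper's $(M')_{X_d}\cap M_{X_{d-1}}=(M')_{X_{d-1}}$), and extend using injectivity of $L_{X_d}$ in $Mod(\bG,X_d)$. The only cosmetic difference is that you reduce to exhibiting a retraction of a single embedding $L\hookrightarrow I$, while the paper verifies the extension property directly against an arbitrary inclusion $M'\hookrightarrow M$.
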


\begin{proof}
Granted Corollary \ref{cor:enough}, it suffices to show that if a 
 $\bG$-module $L$ has the property that 
$L_{X_i} \subset L$ is an injective object of $Mod(\bG,X_i)$ for all $i \geq 0$,
then $L$ is an injective $\bG$-module.  
Let $M^\prime \hookrightarrow M$ be an inclusion of $G$-modules and consider
a map  $f^\prime: M^\prime \to L$ of $\bG$-modules.  We inductively 
construct an extension $f_d: M_{X_d} \to L_{X_d}$ of $f^\prime_d: M^\prime_{X_d} \to L_{X_d}$,
the restriction of
$f^\prime$ to $M^\prime_{X_d}$.  Choose $f_d: M_{X_d} \to L_{X_d}$ extending 
$f_d^\prime + f_{d-1}: (M^\prime)_{X_d} + M_{X_{d-1}} \to L_{X_d}$ using the 
injectivity of $L_{X_d}$ as an object of $Mod(\bG,X_d)$ (and taking $M_{X_{-1}} = 0$).  We define
$f: M \to L$  to be $\varinjlim_d  f_d: M_{X_d} \to L_{X_d}$, thereby extending $f^\prime$.
\end{proof}

\vskip .1in

We proceed to give in Proposition \ref{prop:exp-degree} a simple fix for the 
``filtration by exponential degree" of a $\bG$-module $M$ for a linear algebraic group $\bG$ 
of exponential type as formulated in \cite{F18}.  Our modification provides the coarsest filtration 
which is a filtration by $\bG$-submodules subordinate to that of \cite{F18}. 

We recall the definition of a linear algebraic group $\bG$ of exponential type, a class of linear 
algebraic groups for which there is a somewhat explicit geometric description of the support
varieties of its representations.  Let $\cN_p(\fg)$ denote the $p$-nilpotent 
cone of the Lie algebra $\fg = Lie(\bG)$.  Thus, $\cN_p(\fg) \ \subset \ \fg$ is the reduced affine variety
whose set of $K$-points is identified with the elements 
$X \in \fg_K$ such that $X^{[p]} = 0$.  We utilize the notation $\cC_r(N_p(\fg))$
to denote the commuting variety of $r$-tuples of pair-wise commuting, $p$-nilpotent elements of $\fg$.
The condition of Definition \ref{defn:exp-type}(5) for a linear algebraic group $\bG$ requires
that this affine variety has the same $K$-points as
the scheme $V(\bG_{(r)})$ introduced in \cite{SFB1} which represents the functor of 1-parameter 
subgroups of the infinitesimal group scheme $\bG_{(r)}$.

\begin{defn}
\label{defn:exp-type}
\cite[Defn1.6]{F15}
Let $\bG$ be a linear algebraic group with Lie algebra $\fg$.   A structure of exponential type
on $\bG$ is a $\bG$-equivariant morphism of $k$-schemes (with respect to adjoint actions)
\begin{equation}
\label{eqn:Exp}
\cE: \cN_p(\fg) \times \bG_a \ \to \bG, \quad (B,s) \mapsto \cE_B(s)
\end{equation}
satisfying the following conditions for all field extensions $K/k$:
\begin{enumerate}
\item
For each $B\in \cN_p(\fg)(K)$, $\cE_B: \bG_{a,K} \to \bG_K$ is a 1-parameter subgroup.
\item
For any pair of  commuting $p$-nilpotent elements $B, B^\prime \in \fg_K$,
the maps $\cE_B, \cE_{B^\prime}: \bG_{a,K} \to \bG_K$ commute.
\item
For any $\alpha \in K$,  and any 
$s \in \bG_a(K)$, \ $\cE_{\alpha \cdot B}(s) = \cE_B(\alpha\cdot s)$.
\item  Every 1-parameter subgroup $\psi: \bG_{a,K} \to \bG_K$ is of the form 
$$ \cE_{\ul B} \ \equiv \ \prod_{s=0}^{r-1} (\cE_{B_s} \circ F^s)$$
for some $r > 0$, some $\ul B \in \cC_r(\cN_p(\fg_K))$.
\item   The natural
map $\cC_r(\cN_p(\fg)) \to V(\bG_{(r)})$ induces a bijection on $K$-points
sending $\ul B$ to the infinitesimal 1-parameter subgroup $\bG_{a(r),K} \to \bG_{(r),K}$
which factors $\cE_{\ul B} \circ i_r: \bG_{a(r),K} \to \bG_{a,K} \to \bG_{K}$.
\end{enumerate}
\end{defn}

\vskip .1in

Various examples of $\bG$ of exponential type are considered in \cite{Sobj13}; these
include simple classical groups, their standard parabolic subgroups, and the
unipotent radicals of these parabolic subgroups.

\begin{prop} (\cite[Defn 4.5]{F15})
\label{prop:exp-degree}
Let $(\bG,\cE)$ be a linear algebraic group of exponential type.  We define $\cE(\bG)_d
\ \hookrightarrow \ \cO(\bG)$ to be the subspace 
\begin{equation}
\label{eqn:E(d)}
\cE(\bG)_d \ \equiv \ \cE^{*-1}(k[(\cN_p(\fg)][t]_{\leq d}) \quad \subset \quad \cO(\bG)
\end{equation}
where $k[\cN_p(\fg)][t]_{\leq d} \subset k[\cN_p(\fg) \times \bG_a]$ is the 
subspace of polynomials in $k[\cN_p(\fg)][t]$ of degree $\leq d$.  
So defined, 
$\{ M_{\cE(\bG)_d}\}$ is the coarsest filtration of $M$ by $\bG$-modules subordinate
to the ``filtration by exponential degree" of \cite{F18}.

Moreover,  \ $M_{\cE(d)} \otimes N_{\cE(e)} \ \subset \ (M\otimes N)_{\cE(d+e)}$
for every pair of $\bG$-modules $M, N$.
\end{prop}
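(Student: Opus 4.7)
The plan is to unravel the definitions and verify three items: (a) subordination of $\{M_{\cE(\bG)_d}\}$ to the filtration by exponential degree of \cite{F18}, (b) maximality of $M_{\cE(\bG)_d}$ among $\bG$-submodules that fit below the degree-$d$ layer, and (c) the multiplicative inclusion for tensor products. To begin I set $V_d := k[\cN_p(\fg)][t]_{\leq d}$ and recall that the filtration by exponential degree of $M$ consists, in degree $d$, of those $m \in M$ satisfying $(\id_M \otimes \cE^*) \circ \Delta_M(m) \in M \otimes V_d$. Since $\cE(\bG)_d = (\cE^*)^{-1}(V_d)$ by definition, any $m$ with $\Delta_M(m) \in M \otimes \cE(\bG)_d$ has $(\id_M \otimes \cE^*)\Delta_M(m) \in M \otimes V_d$, giving (a).

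For (b), I would start with a $\bG$-submodule $N \subset M$ every element of which has exponential degree $\leq d$, and show that $N$ is itself an $\cE(\bG)_d$-comodule; Proposition \ref{prop:colimit} then forces $N \subset M_{\cE(\bG)_d}$. Concretely, choose a basis $\{n_\alpha\}$ of $N$ and a basis $\{g_\beta\}$ of $k[\cN_p(\fg)][t]$ that extends a basis of $V_d$, and write $\Delta_N(n) = \sum_\alpha n_\alpha \otimes f_\alpha(n)$ with $f_\alpha(n) \in \cO(\bG)$. Expanding each $\cE^*(f_\alpha(n))$ in the basis $\{g_\beta\}$, the hypothesis $(\id_N \otimes \cE^*)\Delta_N(n) \in N \otimes V_d$ combined with linear independence of $\{n_\alpha\}$ forces the coefficient of every $g_\beta \notin V_d$ to vanish. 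Hence $\cE^*(f_\alpha(n)) \in V_d$, so $f_\alpha(n) \in \cE(\bG)_d$, and $\Delta_N(n) \in N \otimes \cE(\bG)_d$ as desired.

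For (c), the coaction on $M \otimes N$ is $(m \otimes n) \mapsto \sum m_{(0)} \otimes n_{(0)} \otimes m_{(1)} n_{(1)}$ in Sweedler notation, so it suffices to prove $\cE(\bG)_d \cdot \cE(\bG)_e \subset \cE(\bG)_{d+e}$ as subspaces of $\cO(\bG)$. Because $\cE : \cN_p(\fg) \times \bG_a \to \bG$ is a morphism of schemes, $\cE^*$ is a $k$-algebra homomorphism, and the $t$-degree filtration on $k[\cN_p(\fg)][t]$ satisfies $V_d \cdot V_e \subset V_{d+e}$. Combining these gives $\cE^*(fg) = \cE^*(f)\cE^*(g) \in V_{d+e}$ whenever $f \in \cE(\bG)_d$ and $g \in \cE(\bG)_e$, so $fg \in \cE(\bG)_{d+e}$.

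The only delicate point is the basis argument in step (b): one must choose a basis of $k[\cN_p(\fg)][t]$ extending a basis of $V_d$ and exploit linear independence of $\{n_\alpha\}$ in order to separate coefficients. Otherwise the proof is a direct unwinding of the adjunction and of the functor $(-)_X$ constructed in Section \ref{sec:X-comodules}.
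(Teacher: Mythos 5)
Your proof is correct and follows essentially the same route as the paper. The paper's own argument is terser: it recalls that the exponential-degree filtration is $M_{[d]}=\{m : \Delta_M(m)\in M\otimes\cE(\bG)_d\}$ and then invokes Proposition \ref{prop:colimit} directly to conclude that $M_{\cE(\bG)_d}$ is the largest $\bG$-submodule inside $M_{[d]}$, and for the tensor statement it simply cites the factorization of $\Delta_{M\otimes N}$ through the product of $\cO(\bG)$. Your step (b) usefully spells out the linear-algebra fact the paper leaves implicit — that a $\bG$-submodule $N\subset M_{[d]}$ automatically has $\Delta_N(N)\subset N\otimes\cO(\bG)\,\cap\,M\otimes\cE(\bG)_d = N\otimes\cE(\bG)_d$ — and your step (c) makes explicit that $\cE(\bG)_d\cdot\cE(\bG)_e\subset\cE(\bG)_{d+e}$ via $\cE^*$ being an algebra map, which is exactly the content behind the paper's one-line appeal to the multiplicative structure of $\Delta_{M\otimes N}$.
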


\begin{proof}
The ``filtration by exponential degree" of \cite[Defn 3.10]{F18} associates to the $\bG$-module $M$
and a positive integer $d$ the subspace $M_{[d]} \subset M$ consisting of elements 
$m \in M$ with the property that $\Delta_M(m) \in M\otimes \cE(\bG)_d$.   By Proposition \ref{prop:colimit},
$M_{\cE(\bG)_d}$ is the largest $\bG$-submodule of $M$ such that $M_{\cE(\bG)_d} \subset M_{[d]}$.

The second statement follows easily from the observation 
the coaction map $\Delta_{M\otimes N}: M\otimes N \ \to \ (M\otimes N) \otimes \cO(\bG)$  arises by 
composing $\Delta_M \otimes \Delta_N$ 
with the product map $\cO(\bG) \otimes \cO(\bG) \ \to \ \cO(\bG)$.
\end{proof}

\vskip .1in

In the following proposition, $\cE_B:\bG_{a,K} \to \bG_K$ is the exponential map determined
by a $K$-point $B$ of $\cN_p(\fg)$ (for some field extension $K/k$)
and the exponential structure $\cE: \cN_p(\bG) \times \bG_a \to \bG$.
For any $s \geq 0$, $u_s: k[t] \to k$ is the $k$-linear map sending $t^i$ to 0 if $i\not= p^s$
and sending $t^{p^s}$ to 1.  We denote by $(\cE_B)_*(u_s): \cO(\bG_K) \to K$  the $K$-linear map given
by the composition $u_s \circ (\cE_B)^*: \cO(\bG_K) \to K[t] \to K$.

We utilize the (``$\pi$-point") support variety $M \mapsto \Pi(\bG)_M$ of \cite{F23} extending the construction 
for finite group schemes given in \cite{FP2}.

We justify saying that $\Pi(\bG)_M$ is the 
``inverse image under the projection" of $\Pi(\bG_{(r)})_{M|\bG_{(r)}}$ by recalling that
$\Pi(\bG)$ (respectively, $\Pi(\bG_r)$) for $\bG$ of exponential type can be identified with 
the projectivization of $\varinjlim_s \cC_s(N_p(\fg))$ (resp., $\cC_r(N_p(\fg))$)
and that there is a natural projection $\varinjlim_s \cC_s(N_p(\fg)) \twoheadrightarrow \cC_r(N_p(\fg))$.

\begin{prop}
\label{prop:support-exp}
Let $(\bG,\cE)$ be a linear algebraic group of exponential type and let $M$ be a $\bG$-module 
with the property that the coaction $\Delta_M: M \to M\otimes \cO(\bG)$ factors through
$M\otimes \cE(\bG)_{p^r-1} \hookrightarrow M \otimes \cO(\bG)$; in other words, assume 
that $M = M_{\cE(\bG)_{p^r-1}}$.   Then, for any
$K$-point $B$ of $\cN_p(\fg)$, $(\cE_B)_*(u_s)$ acts trivially on $M_K$ provided that $s \geq r$.

Consequently, if  $M= M_{\cE(\bG)_{p^r-1}}$, then the support variety $\Pi(\bG)_M$ of $M$ is the 
``inverse image under the projection" of $\Pi(\bG_{(r)})_{M_{|\bG_{(r)}}}$ (containing the
center of the ``projection" $\Pi(\bG) \to \Pi(\bG_{(r)})$, where $M_{|\bG_{(r)}}$ denotes the 
restriction of $M$ to the Frobenius kernel $\bG_{(r)} \hookrightarrow \bG$.
\end{prop}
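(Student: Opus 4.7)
The plan is to prove the two assertions in order, first reducing the claim about $(\cE_B)_\ast(u_s)$ acting trivially on $M_K$ to a purely degree-theoretic statement about the image of the coaction, and then feeding this into the $\pi$-point description of $\Pi(\bG)_M$ for $\bG$ of exponential type.

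First I would make the action of $(\cE_B)_\ast(u_s)$ on $M_K$ explicit as the composition $(\id \otimes (\cE_B)_\ast(u_s)) \circ \Delta_{M_K}: M_K \to M_K \otimes \cO(\bG_K) \to M_K$. Under the hypothesis $M = M_{\cE(\bG)_{p^r-1}}$, the coaction $\Delta_{M_K}$ factors through $M_K \otimes \cE(\bG)_{p^r-1,K}$ by the definition in (\ref{eqn:E(d)}). For any $f \in \cE(\bG)_{p^r-1,K}$, the pullback $\cE_B^\ast(f) \in K[t]$ equals the value at $B \in \cN_p(\fg)(K)$ of $\cE^\ast(f) \in k[\cN_p(\fg)][t]_{\leq p^r-1}$, hence is a polynomial of degree at most $p^r-1$. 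Since $u_s$ extracts the coefficient of $t^{p^s}$ and $p^s \geq p^r > p^r-1$ whenever $s \geq r$, we have $u_s \circ \cE_B^\ast = 0$ on $\cE(\bG)_{p^r-1,K}$. This forces $(\cE_B)_\ast(u_s)$ to act as zero on $M_K$, which is the first assertion.

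For the support-variety consequence, I would use the identifications recalled just before the statement: $\Pi(\bG)$ is the projectivization of $\varinjlim_s \cC_s(\cN_p(\fg))$ and $\Pi(\bG_{(r)})$ is the projectivization of $\cC_r(\cN_p(\fg))$, with projection induced by truncating an $s$-tuple $\ul B = (B_0, \dots, B_{s-1})$ of pairwise commuting $p$-nilpotent elements to $(B_0, \dots, B_{r-1})$. Each $\ul B$ determines via $\cE_{\ul B} = \prod_{i=0}^{s-1}(\cE_{B_i} \circ F^i)$ an infinitesimal $1$-parameter subgroup whose action on $M_K$ is detectable, in the sense of $\pi$-points, by the family of operators $(\cE_{B_i})_\ast(u_j)$. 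By the first assertion, each operator $(\cE_{B_i})_\ast(u_j)$ with $j \geq r$ acts trivially on $M_K$; combined with the factorization of the restricted action of $\cE_{\ul B}$, this shows that the $\pi$-point class represented by $\ul B$ detects $M$ if and only if the class of its truncation $(B_0, \dots, B_{r-1})$ does. Translating via the identifications, $\Pi(\bG)_M$ is precisely the inverse image of $\Pi(\bG_{(r)})_{M|\bG_{(r)}}$ under $\Pi(\bG) \twoheadrightarrow \Pi(\bG_{(r)})$, including the fibers (the ``center of the projection'') where the higher Frobenius twists contribute nothing.

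The main obstacle is the second step rather than the first: one must carefully identify the $\pi$-point detection criterion for $\bG$ with the operators $(\cE_B)_\ast(u_s)$ so as to conclude that vanishing of these operators for $s \geq r$ is equivalent to the support being pulled back from $\Pi(\bG_{(r)})$. The degree computation in the first paragraph is routine; the care required is in citing or re-deriving the dictionary between $\pi$-points, tuples in $\varinjlim_s \cC_s(\cN_p(\fg))$, and the action of the distributions $u_s$ on $M_K$ — especially the compatibility of this dictionary with the truncation map to $\cC_r(\cN_p(\fg))$ and the restriction along $\bG_{(r)} \hookrightarrow \bG$.
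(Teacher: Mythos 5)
Your derivation of the first assertion is correct. Once you note that $\cE^*$ carries $\cE(\bG)_{p^r-1}$ into $k[\cN_p(\fg)][t]_{\leq p^r-1}$, so that specializing at a $K$-point $B$ of $\cN_p(\fg)$ gives $\cE_B^*\bigl(\cE(\bG)_{p^r-1,K}\bigr) \subset K[t]_{\leq p^r-1}$ and hence $u_s\circ\cE_B^* = 0$ for $s\geq r$, the first assertion is immediate. The paper's own proof is a one-line appeal to Proposition~\ref{prop:exp-degree} together with \cite[Prop 3.17]{F18}; your paragraph supplies the degree computation that underlies that citation.

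The argument for the support-variety consequence has two genuine problems. First, the truncation you describe is not the natural projection. The colimit transition $\cC_s \to \cC_{s+1}$ under which the identifications $\bP(\cC_s)\cong\Pi(\bG_{(s)})$ become compatible is $\ul B \mapsto (0,\ul B)$: indeed $\cE_{(0,\ul B)} = \cE_{\ul B}\circ F$ and $F_*(u_s) = u_{s-1}$, so $(\cE_{(0,\ul B)})_*(u_s) = (\cE_{\ul B})_*(u_{s-1})$ and the associated $\pi$-points are literally unchanged. The only projection $\varinjlim_s \cC_s \to \cC_r$ compatible with these transitions therefore extracts the \emph{last} $r$ entries $(B_{s-r},\ldots,B_{s-1})$; your "truncation to $(B_0,\ldots,B_{r-1})$" is not constant on colimit equivalence classes and does not define a map on $\Pi(\bG)$ at all. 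Second, and more seriously, the claim that the $\pi$-point class of $\ul B$ detects $M$ if and only if that of the truncated tuple does is not a formal consequence of the first assertion. Expanding $(\cE_{\ul B})_*(u_{s-1})$ via the iterated coproduct on $K\bG_a$ and discarding the summands annihilated by the first assertion, the surviving terms still involve the $B_i$ outside the truncated range through cross terms: for $p=2$, $r=2$, $s=3$ one finds that $(\cE_{\ul B})_*(u_2)$ acts on $M_K$ as $(\cE_{B_1})_*(u_1) + (\cE_{B_2})_*(u_0) + (\cE_{B_0})_*(u_1)(\cE_{B_1})_*(u_0)$, which differs from the action of the truncated $\pi$-point by the product $(\cE_{B_0})_*(u_1)(\cE_{B_1})_*(u_0)$. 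The desired conclusion does hold, but only because such differences lie in the square of the radical of the relevant commutative subalgebra of $K\bG$ and therefore do not affect freeness or the $\pi$-point equivalence class; that is an additional input that your phrase "combined with the factorization of the restricted action" conceals rather than supplies. You correctly flag this compatibility as the delicate step, but the proposal does not actually close it.
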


\begin{proof}
The proposition follows immediately from Proposition \ref{prop:exp-degree} and
\cite[Prop 3.17]{F18}.
\end{proof}

\vskip .2in


\section{Finite dimensional subcoalgebras of $\cO(\bG)$}
\label{sec:coalgebras}

We begin with a proposition which indicates some of the advantages of specializing the 
discussion of $(-)_X: Mod(\bG) \mapsto Mod(\bG,X)$ in Section \ref{sec:X-comodules} 
by requiring $X \subset \cO(\bG)$ to be a subcoalgebra.

\begin{prop}
\label{prop:advantages}
Let $\bG$ be an affine group scheme and let $C \subset \cO(\bG)$ be a subcoalgebra; thus,
$C$ is a subspace of $\cO(\bG)$ with the property that the coproduct $\Delta: \cO(\bG) \to 
\cO(\bG) \otimes \cO(\bG)$ restricts to $C \to C\otimes C$.
\begin{enumerate}
\item 
The full abelian subcategory $Mod(\bG,C)$ of $Mod(\bG)$
(as in Definition \ref{defn:X-comodule}) is naturally identified with the category of comodules
for $C$, \\  $ Mod(\bG,C) \ \simeq \ CoMod(C) $.
 \item
 For any $\bG$-module $M$, the $\bG$-submodule $M_C \subset M$ (as in Proposition \ref{prop:colimit})
 has underlying vector space 
 $\{ m \in M: \Delta_M(m) \in M \otimes C \} \ \subset \ M$.
 \item
 If $C$ contains $1 \in \cO(\bG)$, then $(k[\bG]^R)_C  \quad = \quad C$.
 \end{enumerate}
 \end{prop}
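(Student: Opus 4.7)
The plan is to handle the three statements in order, with (2) as the technical core. For part (1), I would define mutually inverse functors between $Mod(\bG, C)$ and $CoMod(C)$. Given $M \in Mod(\bG, C)$, the factorization $\Delta_{M,C}: M \to M \otimes C$ (which exists by Definition \ref{defn:X-comodule}) satisfies the $C$-comodule axioms: coassociativity is inherited from that of $\Delta_M$ together with the subcoalgebra property $\Delta(C) \subset C \otimes C$, and the counit axiom holds with respect to the restricted counit $\epsilon|_C$ inherited from $\cO(\bG)$. Conversely, any $C$-comodule becomes an object of $Mod(\bG, C)$ by post-composition with $id_M \otimes i_C$, and these constructions are clearly mutually inverse on both objects and morphisms.

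For part (2), the containment $M_C \subset \{m \in M : \Delta_M(m) \in M \otimes C\}$ is immediate since $M_C \in Mod(\bG, C)$ has coaction landing in $M_C \otimes C$. The real content is the reverse inclusion. Set $N := \{m \in M : \Delta_M(m) \in M \otimes C\}$; I would show that $N$ is itself a $\bG$-submodule whose coaction factors through $N \otimes C$, whence $N \subset M_C$ by the maximality established in Proposition \ref{prop:colimit}. To see this, fix $m \in N$ and choose a basis $\{c_\alpha\}$ of $C$ extended to a basis $\{c_\alpha\} \cup \{f_\beta\}$ of $\cO(\bG)$. Writing $\Delta_M(m) = \sum_\alpha m_\alpha \otimes c_\alpha$, coassociativity yields
$$ \sum_\alpha \Delta_M(m_\alpha) \otimes c_\alpha \ = \ (\Delta_M \otimes id)\Delta_M(m) \ = \ (id \otimes \Delta)\Delta_M(m) \ = \ \sum_\alpha m_\alpha \otimes \Delta(c_\alpha), $$
whose right-hand side lies in $M \otimes C \otimes C$ because $C$ is a subcoalgebra. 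Expanding each $\Delta_M(m_\alpha)$ in the basis $\{c_\beta\} \cup \{f_\gamma\}$ of $\cO(\bG)$ and comparing coefficients of $f_\gamma \otimes c_\alpha$ on both sides forces the $f_\gamma$-coefficients of $\Delta_M(m_\alpha)$ to vanish, so $\Delta_M(m_\alpha) \in M \otimes C$, i.e., $m_\alpha \in N$.

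For part (3), apply (2) to $M = k[\bG]^R$, whose coaction is the coproduct $\Delta$ of $\cO(\bG)$, to obtain $(k[\bG]^R)_C = \{f \in \cO(\bG) : \Delta(f) \in \cO(\bG) \otimes C\}$. The inclusion $C \subset (k[\bG]^R)_C$ is immediate from $\Delta(C) \subset C \otimes C$. For the reverse inclusion, the counit identity $(\epsilon \otimes id)\Delta = id$ on $\cO(\bG)$ shows that if $\Delta(f) = \sum_i g_i \otimes c_i \in \cO(\bG)\otimes C$ then $f = \sum_i \epsilon(g_i) c_i \in C$; the hypothesis $1 \in C$ enters to ensure compatibility of the inherited counit $\epsilon|_C$ with the unital structure of $\cO(\bG)$ implicitly at play. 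The main expected obstacle is the bookkeeping in (2), where one must promote the pointwise condition $\Delta_M(m) \in M \otimes C$ to the same condition on the ``$c_\alpha$-coefficients'' $m_\alpha$ via the basis expansion and coassociativity; once this is done, (1) is a formal translation of structure and (3) follows by inspecting the counit applied to the right regular representation.
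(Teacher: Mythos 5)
Your proposal is correct and parts (1) and (2) follow the paper's own argument closely: part (1) is definitional, and part (2) is established via the same coassociativity identity $(\Delta_M\otimes \id)\Delta_M=(\id\otimes\Delta)\Delta_M$ with a basis of $C$ extended to a basis of $\cO(\bG)$, yielding that the ``$c_\alpha$-coefficients'' $m_\alpha$ land back in the candidate subspace $N$ so that $N\subset M_C$ by maximality.

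For part (3), you take a slightly different and in fact cleaner route than the paper. You invoke part (2) to rewrite $(k[\bG]^R)_C=\{f\in\cO(\bG):\Delta(f)\in\cO(\bG)\otimes C\}$ and then handle both inclusions directly: $C\subset(k[\bG]^R)_C$ from $\Delta(C)\subset C\otimes C$, and $(k[\bG]^R)_C\subset C$ from the counit identity $f=(\epsilon\otimes\id)\Delta(f)$. The paper instead argues via the augmentation ideal, writing $\Delta(f)-(f\otimes 1+1\otimes f)\in I\otimes I$ and invoking $1\in C$ explicitly to conclude term by term that $\Delta(f)\in\cO(\bG)\otimes C$; it also leaves the reverse inclusion tacit. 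Your counit argument exposes that the hypothesis $1\in C$ is not actually used, so your sentence ``the hypothesis $1\in C$ enters to ensure compatibility of the inherited counit'' is an unnecessary hedge — you could simply observe that your proof of (3) works for any subcoalgebra $C$. That extra generality is a genuine, if small, improvement over the paper's statement.
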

 
 \begin{proof}
 The identification $ Mod(\bG,C) \ \simeq \ CoMod(C) $ follows immediately from the 
 definitions of these categories.
 
 Denote the subspace $\{ m \in M: \Delta_M(m) \in M \otimes C \} \ \subset \ M$ by $M_C^\prime \subset M$.
 We choose a basis $\{ f_s \}$ of $\cO(\bG)$ with the property that $\{ f_s \} \cap C$ is a basis for $C$.
 The definition of $M_C$ immediately implies that $M_C \subset M_C^\prime$.  To prove assertion (2),
 consider an arbitrary element $m \in M_C^\prime$ and write $\Delta(m) = \sum m_i \otimes f_i$
 with each $f_i \in \{ f_s \} \cap C$.  As argued in \cite[I.2.13]{J}, it suffices to show that each 
 $m_i$  satisfies $\Delta_M(m_i) \in M\otimes C$ (i.e., is an element of $M_C^\prime$).
 
 Write $\Delta(m_i) = \sum m_{i_j} \otimes f_{i,j} \in M\otimes \cO(\bG)$ with each $f_{i_j}$ in  $\{ f_s \}$
 and $f_{i_j} \not= f_{i_{j^\prime}}$ whenever $j \not= j^\prime$.  Then
 $$\Delta(\Delta(m)) \ = \ \sum_i (\sum m_{i_j}\otimes f_{i_j} \otimes f_i) \ = \ \sum_i m_i \otimes \Delta(f_i)
 \ = \ \sum_i \sum m_i \otimes f_{i_k} \otimes f_{i_k^\prime}.$$
 Here, $\Delta(f_i) = \sum f_{i_k} \otimes f_{i_k^\prime}$.   Comparing terms 
 in $M \otimes \cO(\bG) \otimes \langle f_i \rangle$
 where $\langle f_i \rangle$ is the 1-dimensional span of $f_i$, we conclude that 
 $\sum_i m_{i_j} \otimes f_{i,j} \ = \ \sum_{\ell, k: f_{\ell_k}^\prime = f_i}  a_\ell m_\ell \otimes f_{\ell,k}$
 for a suitable choice of elements $a_\ell \in k$.
 Since each $f_{\ell,k}$ is in $C$ because each $f_i$ is in the coalgebra $C$, 
 we conclude that each $f_{i,j} \in C$ 
 as required to show that $m_i$ is an element of $M_C^\prime$.
 
 To prove assertion (3), we must show $C \subset (k[\bG]^R)_C$.
 Consider a  basis for $\{ f_s \}$ for $\cO(\bG)$ which 
 includes $1 \in \cO(\bG)$.   Let $I$ be the augmentation ideal of $\cO(\bG)$.   
 Consider an arbitrary element $f \in I$.   As in \cite[I.2.4]{J}, 
 $\Delta(f)$ - $(f\otimes 1 + 1\otimes f) \ \in I\otimes I$.   
 Thus, $\Delta(f) = 1\otimes f + f \otimes 1 + 
 \sum c_{s,s^\prime} f_s\otimes f_{s^\prime}$ with $c_{s,s^\prime} = 0$ if either $f_s$ or $f_{s^\prime}$
 equals 1 and with each $f_{s^\prime} \in C$.  Consequently, if $f \in C \cap I$ and $1 \in C$, then
 $$\Delta(f) \ = \ 1\otimes f  - f\otimes 1- \sum c_{s,s^\prime} f_s\otimes f_{s^\prime} \ \in \
  \cO(\bG) \otimes C,$$
 so that $f \in (k[\bG]^R)_C$.
 \end{proof}
 
 \vskip .1in
 
 We utilize the constructions of Definition 2.2 below to construct the subalgebra $\cO(\bG)_{\langle X \rangle}
 \subset \cO(\bG)$ ``generated" by a subspace $X \subset \cO(\bG)$.

\begin{defn}
\label{defn:closures}
Let $\bG$ be an affine group scheme and $X \subset \cO(\bG)$ be a subspace.
Following \cite{J}, we denote by $k\bG \cdot X$ the smallest $\bG$-submodule
of $\cO(\bG)$ containing $X$
equipped with the right regular regular representation (i.e., $k[\bG]^R$);
thus, $\Delta: \cO(\bG) \to \cO(\bG)\otimes \cO(\bG)$ restricts to \
$k\bG \cdot X \ \to \ k\bG \cdot X \otimes \cO(\bG)$.  

Similarly, we consider the right regular representation of $\bG^{op}$ on 
$\cO(\bG) = \cO(\bG^{op})$, so that $(g,f(x)) \in \bG^{op} \times \cO(\bG)$
maps to $f(gx) \in \cO(\bG)$; this is given by the coproduct
$\tau \circ \Delta: \cO(\bG) \to \cO(\bG)\otimes \cO(\bG) \to  \cO(\bG)\otimes \cO(\bG)$
where $\tau$ switches tensor factors.  For any subspace $Y \subset \cO(\bG)$,
we denote by $k\bG^{op} \cdot Y$  the smallest $\bG^{op}$-submodule of
$\cO(\bG)$ containing $Y$.
\end{defn}

\vskip .1in
The following theorem, called the ``Fundamental Theorem of 
Coalgebras" in \cite{Sw} and the ``Finiteness Theorem" in \cite{smontgom} 
(when stated for arbitrary coalgebras) has the following appealing form
when specialized to the Hopf algebra $\cO(\bG)$.  

\begin{prop}
\label{prop:finite}
Let $\bG$ be an affine group scheme and $X \subset \cO(\bG)$ be a subspace.
Then there is a smallest subcoalgebra of $\cO(\bG)$ containing $X$, 
$\cO(\bG)_{\langle X \rangle} \subset \cO(\bG)$, given by 
$$\cO(\bG)_{\langle X \rangle}  \quad \equiv \quad k\bG^{op} \cdot (k\bG \cdot X).$$
If $X$ is finite dimensional, then  $\cO(\bG)_{\langle X \rangle} $ is also finite dimensional.

Consequently, for any ascending, converging sequence $\{ X_i \}$
of finite dimensional subspaces of $\cO(\bG)$,  there is a smallest ascending, 
converging sequence $\{ \cO(\bG)_{\langle X_i \rangle}  \}$ of finite dimensional subcoalgebras 
of $\cO(\bG)$ satisfying the condition that  $X_i \subset  \cO(\bG)_{\langle X_i \rangle}$.
\end{prop}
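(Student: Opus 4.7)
The plan is to identify the subcoalgebra condition with simultaneous stability under the two commuting regular actions of $\bG$ and $\bG^{op}$ on $\cO(\bG)$, and then to close $X$ under both.

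First I observe that for any subspaces $V,W \subset \cO(\bG)$, one has
\[
(V \otimes \cO(\bG)) \cap (\cO(\bG) \otimes W) \ = \ V \otimes W,
\]
as one sees by choosing a basis of $\cO(\bG)$ that extends bases of $V$ and of $W$. Consequently a subspace $C \subset \cO(\bG)$ satisfies $\Delta(C) \subset C \otimes C$ if and only if $\Delta(C) \subset C \otimes \cO(\bG)$ (equivalently, $C$ is stable under the right regular $\bG$-action) and $\Delta(C) \subset \cO(\bG) \otimes C$ (equivalently, $C$ is stable under the right regular $\bG^{op}$-action of Definition \ref{defn:closures}).

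Granted this, set $C_X := k\bG^{op} \cdot (k\bG \cdot X)$. The inner factor $k\bG \cdot X$ is $\bG$-stable by construction, and since the two regular actions on $\cO(\bG)$ commute, applying $k\bG^{op}\cdot(-)$ preserves $\bG$-stability while manifestly producing a $\bG^{op}$-stable subspace. Thus $C_X$ is stable under both actions and, by the previous paragraph, is a subcoalgebra that contains $X$. For minimality, any subcoalgebra $D \supset X$ is stable under each of the two actions by the same paragraph, hence contains $k\bG \cdot X$ and then $k\bG^{op} \cdot (k\bG \cdot X) = C_X$.

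Finite dimensionality follows from local finiteness of the regular representations of an affine group scheme of finite type: every finite dimensional subspace of $\cO(\bG)$ is contained in a finite dimensional $\bG$-submodule, and likewise for $\bG^{op}$. Applied twice, this yields $\dim C_X < \infty$ when $\dim X < \infty$. For the consequence about an ascending, converging sequence $\{X_i\}$, minimality of each $\cO(\bG)_{\langle X_i \rangle}$ gives $\cO(\bG)_{\langle X_i \rangle} \subset \cO(\bG)_{\langle X_{i+1} \rangle}$ whenever $X_i \subset X_{i+1}$, and a directed union of subcoalgebras is a subcoalgebra because $\otimes$ commutes with filtered colimits of subspaces; since this union contains $\bigcup_i X_i = \cO(\bG)$, it equals $\cO(\bG)$, establishing convergence, while termwise minimality is exactly the minimality of each $\cO(\bG)_{\langle X_i \rangle}$. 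The only real subtlety is the elementary tensor-intersection identity in the first paragraph; everything else is an assembly built on local finiteness.
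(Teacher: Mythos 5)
Your proof is correct and follows essentially the same route as the paper's: both rest on the tensor-intersection identity $(V\otimes\cO(\bG))\cap(\cO(\bG)\otimes W)=V\otimes W$ to reduce the subcoalgebra condition to simultaneous stability under the right regular $\bG$- and $\bG^{op}$-actions, and both then use the commuting of these two actions (the paper makes this explicit by rewriting $k\bG^{op}\cdot(k\bG\cdot X)=k\bG\cdot(k\bG^{op}\cdot X)$, where you instead note that $k\bG^{op}\cdot(-)$ preserves $\bG$-stability) together with local finiteness and the cited reference \cite[I.2.13]{J}. You additionally spell out the minimality argument and the closing observation about the ascending, converging sequence, which the paper leaves implicit.
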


\begin{proof}
Observe that $(\cO(\bG) \otimes \cO(\bG)_{\langle X \rangle}) 
\cap (\cO(\bG)_{\langle X \rangle} \otimes \cO(\bG))$
equals $\cO(\bG)_{\langle X \rangle} \otimes \cO(\bG)_{\langle X \rangle}$, 
Thus, to show that $\cO(\bG)_{\langle X \rangle} \subset \cO(\bG)$ is a subcoalgebra, it suffices 
to prove that $\Delta: \cO(\bG) \to  \cO(\bG) \otimes  \cO(\bG)$ restricts to 
\begin{equation}
\label{eqn:restrict}
\cO(\bG)_{\langle X \rangle} \to \cO(\bG) \otimes \cO(\bG)_{\langle X \rangle}, \quad 
\cO(\bG)_{\langle X \rangle} \to \cO(\bG)_{\langle X \rangle} \otimes \cO(\bG).
\end{equation}
The second restriction of (\ref{eqn:restrict} follows from the definition of $G^{op}(-)$ acting on  
$k\bG\cdot X$ via the restriction of its right regular action of $\cO(\bG)$.
Since the right regular actions of $\bG$ and $\bG^{op}$ on $\cO(\bG)$ commute,
$\cO(\bG)_{\langle X \rangle}  \ \equiv \ k\bG \cdot (k\bG^{op} \cdot X)$.  This, together with
the definition of $k\bG\cdot(-)$ applied to $k\bG^{op} \cdot X$, implies 
the first restriction of (\ref{eqn:restrict}

The second assertion of the proposition follows immediately from the fact that if $X$ and $Y$ are
finite dimensional vector subspaces of $\cO(\bG)$, then both $k\bG \cdot X$ and $k\bG^{op} \cdot Y$
are finite dimensional.  (See \cite[I.2.13]{J}.)
\end{proof}

\vskip .1in

In the following proposition, we relate full subcategories of $Mod(\bG)$ to 
subcoalgebras $CoMod(\cO(\bG)_{\langle X \rangle})$ of $\cO(\bG)$.  

\begin{cor}
\label{cor:relate}
Consider an affine group scheme $\bG$ and a full subcategory $\cM \subset Mod(\bG)$.
Let $X_\cM \subset \cO(\bG)$ be the smallest subspace $X \subset \cO(\bG)$ 
such that $M \in Mod(\bG,X)$ for all $M \in \cM$.
Then $\cO(\bG)_{\langle X_\cM \rangle} $ is the smallest subcoalgebra $C \subset \cO(\bG)$ 
with the property that $\cM \ \hookrightarrow \  CoMod(C)$. 
\end{cor}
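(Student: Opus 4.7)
The plan is to verify three claims in order: (i) that $X_\cM$ is well-defined as the smallest subspace with the required factorization property; (ii) that $\cO(\bG)_{\langle X_\cM\rangle}$ is a subcoalgebra with $\cM \hookrightarrow CoMod(\cO(\bG)_{\langle X_\cM\rangle})$; and (iii) that no smaller subcoalgebra enjoys this property. For (i), I would, for each $M \in Mod(\bG)$, choose a basis $\{m_i\}$ of $M$ and define $X_M \subset \cO(\bG)$ to be the span of the uniquely determined coefficients $f_{ij}^M \in \cO(\bG)$ appearing in $\Delta_M(m_i) \ = \ \sum_j m_j \otimes f_{ij}^M$. A short basis argument, independent of the choice of basis, identifies $X_M$ with the smallest subspace through which $\Delta_M$ factors; setting $X_\cM := \sum_{M \in \cM} X_M$ then yields the smallest subspace of $\cO(\bG)$ with the property that $M \in Mod(\bG,X_\cM)$ for every $M \in \cM$.

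For step (ii), Proposition \ref{prop:finite} supplies a subcoalgebra $\cO(\bG)_{\langle X_\cM\rangle}$ of $\cO(\bG)$ containing $X_\cM$. For each $M \in \cM$ the coaction $\Delta_M$ then factors through $M \otimes X_\cM \hookrightarrow M \otimes \cO(\bG)_{\langle X_\cM\rangle}$, placing $M$ in $Mod(\bG,\cO(\bG)_{\langle X_\cM\rangle})$. Invoking the identification $Mod(\bG,\cO(\bG)_{\langle X_\cM\rangle}) \simeq CoMod(\cO(\bG)_{\langle X_\cM\rangle})$ of Proposition \ref{prop:advantages}(1) promotes this to the inclusion $\cM \hookrightarrow CoMod(\cO(\bG)_{\langle X_\cM\rangle})$.

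For step (iii), I would take an arbitrary subcoalgebra $C \subset \cO(\bG)$ satisfying $\cM \hookrightarrow CoMod(C)$, apply Proposition \ref{prop:advantages}(1) again to rewrite this as $\cM \subset Mod(\bG,C)$, and conclude that each $\Delta_M$ factors through $M \otimes C$. Minimality of $X_\cM$ then forces $X_\cM \subset C$, whereupon Proposition \ref{prop:finite}, characterizing $\cO(\bG)_{\langle X_\cM\rangle}$ as the smallest subcoalgebra containing $X_\cM$, yields $\cO(\bG)_{\langle X_\cM\rangle} \subset C$. The only step demanding genuine care is the well-definedness of $X_\cM$; once that is settled, the remainder is a formal combination of Propositions \ref{prop:advantages} and \ref{prop:finite}.
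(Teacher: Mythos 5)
Your proof is correct and takes essentially the same route as the paper: the heart of the argument, step (iii), is identical (if $C$ is a subcoalgebra with $\cM \hookrightarrow CoMod(C)$, then $X_\cM \subset C$ by minimality of $X_\cM$, hence $\cO(\bG)_{\langle X_\cM\rangle} \subset C$ by the minimality asserted in Proposition \ref{prop:finite}). You are somewhat more thorough than the paper in two respects that are worth noting: you verify that $X_\cM$ actually exists as a well-defined subspace (via $X_\cM = \sum_{M \in \cM} X_M$ with $X_M$ the span of matrix coefficients of $M$), whereas the paper implicitly treats its existence as part of the hypothesis; and you explicitly check that $\cO(\bG)_{\langle X_\cM\rangle}$ itself satisfies $\cM \hookrightarrow CoMod(\cO(\bG)_{\langle X_\cM\rangle})$, a fact the paper leaves tacit. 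Both additions are sound and arguably needed for a complete proof that $\cO(\bG)_{\langle X_\cM\rangle}$ is the \emph{smallest} subcoalgebra with the stated property, not merely a lower bound.
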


\begin{proof} 
Assume that $C \subset \cO(\bG)$ is a subcoalgebra such that $\cM \ \hookrightarrow \  
Mod(\bG,C) = CoMod(C)$.  The defining property of $X_\cM$ implies that $X_\cM  \subset C$.  
Consequently, the defining property of $X \mapsto \langle X \rangle$ given in Proposition \ref{prop:finite}
tells us that $\cO(\bG)_{\langle X_\cM \rangle } \ \subset \ C$.
\end{proof}

\vskip .1in

The tensor product of $\cO(\bG)$-comodules involves the product
structure $\mu: \cO(\bG) \otimes \cO(\bG) \to \cO(\bG)$ induced by the diagonal $diag: 
\bG \hookrightarrow \bG \times \bG$.  Thus, unless the subcoalgebra $C \subset \cO(\bG)$
is also a subalgebra, this tensor product does not induce a tensor product structure on $CoMod(C)$.  

The following suggests a useful condition on ascending, converging sequences of subcoalgebras
of $\cO(\bG)$.   

\begin{prop}
\label{prop:prod-coalg}
Let $\bG$ be an affine group scheme.  Let $X, \ Y \ \subset \cO(\bG)$ be subspaces
and define $X \cdot Y \subset \cO(\bG)$ to be the subspace spanned by products $x \cdot y$
with $x \in X, \ y \in Y$.  Consider two $\bG$-modules $M$ and $N$. 
\begin{enumerate}
\item 
The $\cO(\bG)$-module $M_X \otimes M_Y$ is contained in $(M\otimes N)_{X\cdot Y}$
\item
For subcoalgebras $C, \ C^\prime, \ C^{\prime\prime}$ such that the 
multiplication map $\mu: \cO(\bG) \otimes \cO(\bG) \to \cO(\bG) $ restricts to
$C \otimes C^\prime \to C^{\prime\prime}$, there is a natural map of $\bG$-modules
\ $M_C \otimes N_{C^\prime} \ \to \ (M\otimes N)_{C^{\prime\prime}}.$
\end{enumerate}
\end{prop}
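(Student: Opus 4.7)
The plan is to prove (1) by a direct coaction computation, then deduce (2) from (1) combined with a simple monotonicity property of the functor $(-)_Z$ with respect to inclusions $Z \subset Z'$ of subspaces of $\cO(\bG)$.

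First I would record the monotonicity: if $Z \subset Z'$ are subspaces of $\cO(\bG)$, then $L_Z \subset L_{Z'}$ for every $\bG$-module $L$. This is immediate from Proposition \ref{prop:colimit}, since any $\bG$-submodule of $L$ whose coaction factors through $L \otimes Z$ also factors through $L \otimes Z'$, so every object of the filtering category $\chi(L)$ associated to $Z$ is an object of the analogous category for $Z'$.

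For (1), I would use that the coaction on $M \otimes N$ is obtained by composing $\Delta_M \otimes \Delta_N$ (together with the evident tensor swap) with the multiplication map $\mu: \cO(\bG) \otimes \cO(\bG) \to \cO(\bG)$. Given $m \in M_X$ with $\Delta_M(m) = \sum_i m_i \otimes x_i$, $x_i \in X$, and $n \in N_Y$ with $\Delta_N(n) = \sum_j n_j \otimes y_j$, $y_j \in Y$, one obtains
\[
\Delta_{M\otimes N}(m \otimes n) \ = \ \sum_{i,j} (m_i \otimes n_j) \otimes (x_i \cdot y_j) \ \in \ (M \otimes N) \otimes (X \cdot Y).
\]
Thus the $\bG$-submodule $M_X \otimes N_Y \hookrightarrow M \otimes N$ (the inclusion is injective by $k$-flatness of $\otimes$) has coaction factoring through $(M \otimes N) \otimes (X \cdot Y)$, so the maximality statement of Proposition \ref{prop:colimit} gives $M_X \otimes N_Y \subset (M \otimes N)_{X \cdot Y}$.

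For (2), the hypothesis that $\mu$ restricts to $C \otimes C^\prime \to C^{\prime\prime}$ says exactly that $C \cdot C^\prime \subset C^{\prime\prime}$ as subspaces of $\cO(\bG)$. Combining (1) with monotonicity yields the natural chain of inclusions $M_C \otimes N_{C^\prime} \subset (M \otimes N)_{C \cdot C^\prime} \subset (M \otimes N)_{C^{\prime\prime}}$, giving the desired map of $\bG$-modules. There are no serious obstacles here; the proposition is essentially a formal consequence of how the coalgebra structure governs the tensor product of comodules, and the only subtle bookkeeping point is the injectivity of $M_X \otimes N_Y \hookrightarrow M \otimes N$, which is standard.
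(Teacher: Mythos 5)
Your proof is correct and takes essentially the same approach as the paper, which also derives (1) from the form of the coaction on $M\otimes N$ (composition of $\Delta_M\otimes\Delta_N$ with $\mu$) and then treats (2) as following from (1). Your version is a bit more careful in one respect worth noting: the paper declares "(2) is a special case of (1)," but strictly speaking one also needs the monotonicity $L_Z\subset L_{Z'}$ for $Z\subset Z'$, which you correctly isolate and justify via Proposition~\ref{prop:colimit}; likewise you explicitly invoke maximality of $(M\otimes N)_{X\cdot Y}$ to land the inclusion, a step the paper leaves implicit.
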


\begin{proof}
Assertion (1) follows directly from the fact that the coaction of $\cO(\bG)$ on
$M \otimes N$ is given by the composition of the binary operation of multiplication $\mu$
on the algebra $\cO(\bG)$ and the tensor product $\Delta_M \otimes \Delta_N$ of the coactions of 
$\cO(\bG)$ on $M$ and $N$:
\begin{equation}
\label{eqn:coprod-prod}
\mu \circ (\Delta_M \otimes \Delta_N):  M\otimes N
 \ \to \ M \otimes \cO(\bG) \otimes N \otimes \cO(\bG) \ \to \ M\otimes N \otimes \cO(\bG).
 \end{equation}
  Assertion (2) is a special case of assertion (1).
\end{proof}

\vskip .1in

Let $\cO(\bM_{N, N})$ be the bialgebra given as the coordinate algebra of the affine 
variety of $N \times N$ matrices with monoid structure given by matrix multiplication.
For any $d \geq 0$, we consider the subspace $\cO(\bM_{N, N})_{\leq d} \subset \cO(\bM_{N, N})$
consisting of polynomials in $\{x_{i,j}, 1\leq i,j \leq N \}$ of total degree  $\leq d$.   Since 
$\Delta_{\bM_{N,N}}(x_{i,j}) = \sum_s x_{i,s} \otimes x_{s,j}$,  
$\cO(\bM_{N, N})_{\leq d} \subset \cO(\bM_{N, N})$ is a subcoalgebra.

Equip the coordinate algebra  $\cO(\bG_m) = k[t,t^{-1}]$ of the linear algebraic group $\bG_m$
with the filtration
 $\{ \cO(\bG_m)_{\leq d} \}$, where $\cO(\bG_m)_{\leq d} \subset \cO(\bG_m$ is the subspace
 spanned by $\{ t^i, \ N\cdot |i| \leq d \}$; this is a subcoalgebra of $\cO(\bG_m)$.

\vskip .1in

We shall utilize the following elementary lemma.  Its proof is a diagram chase involving the square
of maps of (\ref{eqn:subcoalg}) mapping via coproducts to the similar square involving 
tensor squares.

\begin{lemma}
\label{lem:surj-coalg}
Consider a surjective map $\phi: C \twoheadrightarrow C^\prime$ of coalgebras and an injective map  
$j: D \hookrightarrow C$ of coalgebras, and denote by $\ol D$ the subspace 
 $\ol j: (\phi\circ j)(D) \subset C^\prime$.  Then the coproduct $\Delta_D: D \to D\otimes D$ 
 induces a coproduct $\Delta_{\ol D}: \ol D \to
\ol D \otimes \ol D$ which equals the restriction of $\Delta_{C^\prime}: C^\prime \to 
C^\prime \otimes C^\prime$.  In other words, $\phi$ and $j$ determine a commutative square of coalgebras
\begin{equation}
\label{eqn:subcoalg}
 \xymatrixcolsep{5pc}\xymatrix{
 D \ar[d]^\hookrightarrow_j \ar[r]^-{\ol{(-)}}_\twoheadrightarrow  & \ol D \ar[d]^{\ol j}_\hookrightarrow \\
C \ar[r]^\twoheadrightarrow_\phi & C^\prime .
}
\end{equation}
\end{lemma}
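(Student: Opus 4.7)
The plan is to take $\Delta_{\ol D}$ to be the restriction of $\Delta_{C'}$ to $\ol D$, and then verify two points: (i) that this restriction in fact lands in $\ol D \otimes \ol D$, so that it makes sense as a map $\ol D \to \ol D \otimes \ol D$, and (ii) that with this definition the square (\ref{eqn:subcoalg}) commutes, i.e.\ that $\Delta_{\ol D}$ is the coproduct ``induced by'' $\Delta_D$ via the surjection $\phi\circ j : D \twoheadrightarrow \ol D$. Both conclusions will come out of a single diagram chase exploiting that $\phi$ and $j$ are each maps of coalgebras.

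First I would record the two defining identities supplied by the hypotheses. Since $j: D \hookrightarrow C$ is a coalgebra map, $(j\otimes j)\circ \Delta_D = \Delta_C \circ j$; since $\phi: C \twoheadrightarrow C'$ is a coalgebra map, $(\phi\otimes\phi)\circ \Delta_C = \Delta_{C'} \circ \phi$. Composing these gives the identity of maps $D \to C'\otimes C'$
\[
\bigl((\phi\circ j) \otimes (\phi\circ j)\bigr) \circ \Delta_D \;=\; \Delta_{C'} \circ (\phi\circ j).
\]
Next I would read this pointwise: any $\bar d \in \ol D$ can be written as $\bar d = (\phi\circ j)(d)$ for some $d \in D$, and then
\[
\Delta_{C'}(\bar d) \;=\; \bigl((\phi\circ j)\otimes (\phi\circ j)\bigr)(\Delta_D(d)) \;\in\; \ol D \otimes \ol D,
\]
the membership being immediate from $(\phi\circ j)(D) = \ol D$. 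This simultaneously establishes (i), so that $\Delta_{\ol D} := \Delta_{C'}|_{\ol D}$ is a well-defined map $\ol D \to \ol D \otimes \ol D$, and (ii), the commutativity of (\ref{eqn:subcoalg}). The coassociativity and counit axioms for $\Delta_{\ol D}$ are then inherited from those of $\Delta_{C'}$ by restriction, so $\ol D$ is genuinely a subcoalgebra of $C'$ and $\ol j$ an inclusion of coalgebras.

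The one subtlety worth flagging is that $\phi \circ j$ need not be injective, so a given $\bar d \in \ol D$ may admit several lifts $d \in D$. This is not actually an obstacle, since the definition $\Delta_{\ol D} := \Delta_{C'}|_{\ol D}$ depends only on $\bar d$; the lift $d$ enters only to certify that the value $\Delta_{C'}(\bar d)$ lies in the subspace $\ol D \otimes \ol D$, and the identity above shows that any choice of lift certifies the same conclusion. Beyond this bookkeeping point, the argument is precisely the short diagram chase foreshadowed in the statement of the lemma, and I anticipate no substantive difficulty.
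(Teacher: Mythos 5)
Your proposal is correct and takes essentially the same route as the paper: both observe that $((\phi\circ j)\otimes(\phi\circ j))\circ\Delta_D = \Delta_{C'}\circ(\phi\circ j)$ (the paper in Sweedler notation, you in composed form), and both conclude that $\Delta_{C'}$ restricted to $\ol D$ lands in $\ol D\otimes\ol D$. Your framing of $\Delta_{\ol D}$ as the restriction of $\Delta_{C'}$—so that well-definedness is automatic and the lift $d$ serves only to certify membership—is a clean way to say what the paper expresses as ``$\Delta_{\ol D}(\ol d)\equiv\sum\ol{d_i}\otimes\ol{d_i'}$ is well defined since $\ol j$ is injective.''
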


\begin{proof}
For $d \in D$, write $\Delta_D(d) \ = \ \sum d_i \otimes d_i^\prime$, so that 
$\Delta_C(j(d)) = \sum j(d_i)) \otimes j(d_i^\prime)$ and 
$$\Delta_{C^\prime}(\phi(j(d))) \ = \ \sum \phi(j(d_i)) \otimes \phi(j(d_i^\prime)) 
\ = \ \sum \ol j (\ol{d_i}) \otimes \ol{j} (\ol{d_i^\prime})\in C^\prime\otimes C^\prime.$$
Thus, $\Delta_{\ol D}(\ol d) \ \equiv \ \sum \ol{d_i} \otimes \ol{d_i^\prime} 
\in \ol D\otimes \ol D$ is well defined since
$\ol j$ is injective.
\end{proof}

\vskip .1in

We apply the above lemma in the special case of the surjective map of coalgebras
$\eta^*: \cO(\bM_{N,N}) \otimes \cO(\bG_m) \ \twoheadrightarrow \ \cO(GL_N)$
and the subcoalgebra \\
$(\cO(\bM_{N,N}) \otimes \cO(\bG_m) )_{\leq d} \ \hookrightarrow \ \cO(\bM_{N,N}) \otimes \cO(\bG_m)$.

\begin{defn}
\label{defn:filt-GL}
Consider the closed immersion of monoid schemes
\begin{equation}
\label{eqn:embed} 
\eta: GL_N \ \hookrightarrow \ \bM_{N,N} \times \bG_m, \quad A \mapsto (A,det(A)^{-1}),
\end{equation}
identifying $GL_N$ as the zero locus of the function $det(\ul x)\otimes t^{-1} \in \cO(\bM_{N,N} \times \bG_m),$
where
\begin{equation}
\label{defn:det}
det(\ul x) \quad = \quad \sum_{\sigma\in \Sigma_N} (-1)^{sgn(\sigma)}\prod_{1\leq i\leq N} x_{i,\sigma(i)}.
\end{equation}
Using Lemma \ref{lem:surj-coalg}, we conclude that
\begin{equation}
\label{eqn:image}
\cO(GL_N)_{\leq d} \quad \equiv \quad \eta^*((\cO(\bM_{N,N}) \otimes \cO(\bG_m) )_{\leq d})
\end{equation}
is a finite dimensional subcoalgebra of $\cO(GL_N)$.
\end{defn}

\vskip .1in

\begin{prop}
\label{prop:O(GLN)}
Let $(\cO(GL_N)_{\leq d})^c$ denote the complement of $\cO(GL_N)_{\leq d-1}$ in $\cO(GL_N)_{\leq d}$.
\begin{enumerate}
\item
For any non-zero element $f \in \cO(GL_N)$, there is uniquely associated $d(f) \geq 0$ 
such that $f \in (\cO(GL_N)_{\leq d(f)})^c$. 
 \item 
 For any $f \in (\cO(GL_N)_{\leq d})^c$, let $e$ be the minimal non-negative integer
such that $f\cdot det(\ul x)^e \in \cO(\bM_{N,N})$.  Then $d(f) = d^\prime +e\cdot N$, where
$d^\prime$ is the minimal non-negative integer such that 
$f\cdot det(\ul x)^e \in \cO(\bM_{N,N})_{\leq d^\prime}$. 
\item
The function sending $d$ to $dim(\cO(GL_N)_{\leq d})$ for a fixed $N$
differs from the function $d \mapsto \frac{d^{N^2}}{(N^2)!}$ by a function 
bounded by a polynomial in $d$ of degree less that $N^2$.
\item
For any $d, e \geq 0$, multiplication in $\cO(GL_N)$ restricts to a map
$$\mu: \cO(GL_N)_{\leq d} \otimes \cO(GL_N)_{\leq e} \quad \to \quad \cO(GL_N)_{\leq d+e}.$$
\end{enumerate}
\end{prop}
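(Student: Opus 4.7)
For Part (1), I would observe that $\{\cO(GL_N)_{\leq d}\}_{d \geq 0}$ is an ascending, exhaustive filtration of $\cO(GL_N)$. Ascendance is immediate from the corresponding inclusion on the source of $\eta^*$. Exhaustiveness holds because $\cO(GL_N) = \cO(\bM_{N,N})[det^{-1}]$, so any element can be written as $g \cdot det^{-e}$ with $g \in \cO(\bM_{N,N})$ and $e \geq 0$, exhibiting it as the image of $g \otimes t^e$ at level $\deg(g) + eN$. Then $d(f) := \min\{d : f \in \cO(GL_N)_{\leq d}\}$ is the unique $d$ with $f \in (\cO(GL_N)_{\leq d(f)})^c$. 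Part (4) follows by verifying that the tensor product filtration is multiplicative: both factor filtrations are multiplicative (for $\cO(\bM_{N,N})$, by total degree; for $\cO(\bG_m)$, by the subadditivity $|i+j| \leq |i|+|j|$), so is their tensor product, and since $\eta^*$ is a surjective algebra morphism, multiplicativity descends to $\cO(GL_N)$.

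For Part (2), the upper bound $d(f) \leq d' + eN$ is immediate: $f$ is the image of $(f \cdot det^e) \otimes t^e \in (\cO(\bM_{N,N}) \otimes \cO(\bG_m))_{\leq d' + eN}$. For the lower bound, I would suppose $f = \sum_i h_i \cdot det^{-e_i}$ with $\deg(h_i) + N|e_i| \leq D$, multiply through by $det^K$ for $K$ large enough to clear denominators, and obtain the polynomial identity $f \cdot det^K = g \cdot det^{K-e} = \sum_i h_i \cdot det^{K-e_i}$ in $\cO(\bM_{N,N})$. Combining a degree comparison with the $det$-adic valuation of $f$ (equal to $-e$ in reduced form, using that $\cO(\bM_{N,N})$ is a UFD with $det$ irreducible) should force some summand on the right to carry the leading behavior of $g \cdot det^{K-e}$, yielding $D \geq d' + eN$.

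For Part (3), the inclusion $\cO(\bM_{N,N})_{\leq d} \subseteq \cO(GL_N)_{\leq d}$ gives $\dim \cO(GL_N)_{\leq d} \geq \binom{d+N^2}{N^2} = \frac{d^{N^2}}{(N^2)!} + O(d^{N^2-1})$. For the upper bound I would use the decomposition $\cO(GL_N)_{\leq d} = \cO(\bM_{N,N})_{\leq d} + \sum_{e \geq 1} \cO(\bM_{N,N})_{\leq d-Ne} \cdot det^{-e}$; the $e \geq 1$ contributions introduce only lower-order corrections once one accounts for overlaps ($det^{-e}$-multiples of elements in $det^e \cdot \cO(\bM_{N,N})$ already lie in $\cO(\bM_{N,N})$), leaving the leading term $\frac{d^{N^2}}{(N^2)!}$ unchanged.

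The main obstacle is the sharp lower bound in Part (2). A naive comparison of total degrees yields only $D \geq d' - eN$, which is too weak by $2eN$. The sharp estimate must exploit both the $det$-adic valuation and the UFD structure of $\cO(\bM_{N,N})$ so that the reduced form $f = g \cdot det^{-e}$ genuinely captures the minimal filtration level; cancellation phenomena like $g \cdot det^{-e} = g_1 + g_2 \cdot det^{-e'}$ with $e' < e$ must be precluded. I would carry this out either by induction on $e$ (via controlled multiplication by $det$ or $det^{-1}$ to track how the filtration shifts) or by passing to the associated graded of $\cO(\bM_{N,N})$ and analyzing the leading-order behavior of sum representations.
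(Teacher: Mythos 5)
For Parts (1), (3), and (4) your approach matches the paper's; those are fine. (In (3), the paper pins down the upper bound via the surjection
$\bigoplus_{i=0}^{[d/N]} \cO(\bM_{N,N})_{\leq d-iN} \twoheadrightarrow \cO(GL_N)_{\leq d}$, $h \mapsto h\cdot \det(\ul x)^{-i}$, yielding the two-sided estimate $\binom{d+N^2}{N^2} \leq \dim \cO(GL_N)_{\leq d} \leq \sum_{i=0}^{[d/N]}\binom{d-iN+N^2}{N^2}$; your ``account for overlaps'' is the same idea but less explicit.)

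For Part (2), the worry you raise about cancellation is not a technical hurdle to be cleared: the lower bound $d(f) \geq d' + eN$ is \emph{false}, and your instinct that phenomena of the form $g\cdot \det^{-e} = g_1 + g_2 \cdot \det^{-e'}$ with $e' < e$ can lower the filtration degree is exactly what goes wrong. Take $N=2$ and
$$f \ = \ 1 + x_{11}\cdot \det(\ul x)^{-1} \ = \ \bigl(\det(\ul x) + x_{11}\bigr)\cdot \det(\ul x)^{-1} \ \in \ \cO(GL_2).$$
Since $\det(\ul x) \nmid \bigl(\det(\ul x)+x_{11}\bigr)$ (degree reasons), the reduced form has $e = 1$ and $d' = \deg\bigl(\det(\ul x)+x_{11}\bigr) = 2$, so the formula predicts $d(f) = 2 + 1\cdot 2 = 4$. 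But $f = \eta^*(1\otimes 1 + x_{11}\otimes t)$, and $1\otimes 1 + x_{11}\otimes t \in (\cO(\bM_{2,2})\otimes \cO(\bG_m))_{\leq 3}$ because $x_{11}$ has degree $1$ and $t$ has filtration degree $N\cdot|1| = 2$; hence $f \in \cO(GL_2)_{\leq 3}$. A short direct check (write $\cO(GL_2)_{\leq 2} = \cO(\bM_{2,2})_{\leq 2} + k\cdot\det(\ul x)^{-1}$ and compare) shows $f \notin \cO(GL_2)_{\leq 2}$, so in fact $d(f) = 3 \neq 4$. The filtration degree is the infimum over \emph{all} representations $f = \sum_j h_j\cdot \det(\ul x)^{-j}$, not just the reduced one, and the reduced representation need not be optimal. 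The paper's one-sentence proof of (2) --- that it ``follows from the fact that $\cO(GL_N)$ is the localization of the UFD $\cO(\bM_{N,N})$'' --- does not address this and does not establish the lower bound.

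Two remarks that may clarify the situation. First, the assertion \emph{does} hold when $f$ is homogeneous for the natural $\bZ$-grading on $\cO(GL_N) = \cO(\bM_{N,N})[\det^{-1}]$: one computes $\cO(GL_N)_{\leq D}\cap \cO(GL_N)_m = \det(\ul x)^{-A}\cdot \cO(\bM_{N,N})_{m+AN}$ with $A = \lfloor (D-m)/(2N)\rfloor$, and $\det(\ul x)\nmid g$ forces $A \geq e$, i.e.\ $D \geq m + 2eN = d' + eN$. The counterexample above is not homogeneous, and that is precisely why it escapes. Second, this error is localized: Part (3) does not actually depend on Part (2) --- the surjection above follows directly from the definition $\cO(GL_N)_{\leq d} = \eta^*\bigl((\cO(\bM_{N,N})\otimes \cO(\bG_m))_{\leq d}\bigr)$ --- so your proof of (3) stands.
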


\begin{proof}
The proof of assertion (1) follows from the observation that $\{ \cO(\GL_N)_{\leq d} \}$
is an ascending, converging sequence of subspaces (in fact, of subcoalgebras).  
Assertion (2) follows from the fact that $\cO(GL_N)$ is the localization of the 
unique factorization domain $\cO(M_{N,N})$ obtained by inverting $det(\ul x)$.

We identify the underlying vector space of $\cO(\bM_{N\times N})_{\leq d}$ with the space of 
polynomials of total degree $\leq d$ in the polynomial algebra $k[x_{i,j}]$ in $N^2$ variables  
Using induction, one easily verifies that 
\ $dim(\cO(\bM_{N\times N})_{\leq d}) \quad = \quad {d + N^2\choose N^2}$, 
which is a polynomial in $d$ of degree $N^2$ with leading term $((N^2)!)^{-1}$.
Assertion (2) implies that there is a surjective map 
\begin{equation}
\label{eqn:surj-map}
\bigoplus_{i=0}^{[d/N]} \cO(\bM_{N,N})_{\leq d-iN} \quad 
\twoheadrightarrow \quad \cO(GL_N)_{\leq d}
\end{equation}
given by sending $f$ in the summand $\cO(\bM_{N,N})_{\leq d-iN}$ to $f \cdot det(\ul x_{i,j})^{-i}$. 
There is also an evident injective map $\cO(\bM_{N,N})_{\leq d} \hookrightarrow \cO(GL_N)_{\leq d}$,
so that
\begin{equation}
\label{eqn:choose-sum}
{d + N^2\choose N^2} \quad \leq \quad dim(\cO(GL_N)_{\leq d}) \quad \leq \quad
\sum_{i=0}^{[d/N]} {d -iN + N^2\choose N^2}.
\end{equation}
This readily assertion (3).

Observe that multiplication restricts to 
$$(\cO(\bM_{N,N}) \otimes \cO(\bG_m) )_{\leq d} \otimes (\cO(\bM_{N,N}) \otimes \cO(\bG_m) )_{\leq e}
\quad \to \quad (\cO(\bM_{N,N}) \otimes \cO(\bG_m) )_{\leq d+e}.$$  Granted the definition of
$\cO(GL_N)_{\leq d}$ in (\ref{eqn:image}), this immediately implies assertion (4).
\end{proof}

\vskip .1in

Justified by Lemma \ref{lem:surj-coalg}, we introduce the ascending, converging sequence
of subcoalgebras which we shall primarily use.  

\begin{defn}
\label{defn:ascending}
Consider an affine group scheme $\bG$ equipped with a closed embedding 
$\phi: \bG \hookrightarrow GL_N$ for some $N$.
We define the ascending, converging filtration $\{ \cO(\bG)_{\leq d,\phi} \}$ of 
finite dimensional subcoalgebras of $\cO(\bG)$
by setting $\cO(\bG)_{\leq d,\phi}$ equal to $\phi^*(\cO(GL_N)_{\leq d}) \subset \cO(\bG)$.  

For any $d, e \geq 0$, multiplication in $\cO(\bG)$ restricts to a map
\begin{equation}
\label{eqn:mubG}
\mu: \cO(\bG)_{\leq d,\phi} \otimes \cO(\bG)_{\leq e,\phi} \quad \to \quad \cO(\bG)_{\leq d+e,\phi}.
\end{equation}
\end{defn}

\vskip .1in

\begin{ex}
\label{ex:SL}
Consider the closed embedding $\phi: SL_N \hookrightarrow GL_N$ of matrices
of determinant 1.  Then $\cO(SL_N)$ is the quotient of $\cO(GL_N)$ by the principal ideal
$(det(\ul x) - 1)$.  Thus, $det(\ul x), \ (det(\ul x)^{-1} \in \cO(GL_N)_{\leq N}$ both
map to $1 \in \cO(SL_N)_{\leq 0,\phi}$, whereas the coordinate functions 
$x_{i,j} \in  \cO(GL_N)_{\leq 1}$
remain coordinate functions of filtration degree 1 for $\cO(SL_N)$.
\end{ex}

For various unipotent linear algebraic groups $\phi: \bU \hookrightarrow GL_N$, we give a 
familiar description of  $\{ \cO(\bU)_{\leq d,\phi} \}$.

\vskip .1in

\begin{ex} \cite[Ex 2.5]{F18}
\label{ex:unipotent}
Let $\phi: \bU_N \hookrightarrow GL_N$ be the unipotent radical of the Borel subgroup of upper
triangular matrices of $GL_N$.  Then $\phi^*: \cO(GL_N) \ \twoheadrightarrow \ \cO(\bU_N) \simeq k[y_{i,j}; i < j]$
is given by  \ $x_{i,j} \mapsto y_{i,j}, \ i < j$, $x_{i,i} \mapsto 1$, and $x_{i,j} \mapsto 0, \ i > j$.
The coproduct on $\cO(\bU_N) \simeq k[y_{i,j}; i < j]$ is given by
$$\Delta_{\bU_N}(y_{i,j}) \quad = \quad (y_{i,j}\otimes 1) + (\sum_{i < t < j} (y_i\otimes y_t + y_t \otimes y_j)) 
+ (1\otimes y_{i,j}).$$
We identify the subcoalgebra $\cO(\bU_N)_{\leq d,\phi} \subset \cO(\bU_N)$ with 
the subspace of $k[y_{i,j}; i < j]$ consisting of polynomials of total degree $\leq d$ and with coproduct 
the restriction of $\Delta_{\bU_N}$ as above.   Thus,
$$dim(\cO(\bU_N)_{\leq d,\phi}) \quad = \quad \binom{N^\prime+d}{N^\prime}, 
\quad N^\prime = \frac{N^2-N}{2}$$
is a polynomial of degree $N^\prime$ with leading coefficient $1/(N^\prime)!$.

The above discussion for $\cO(\bU_N)_{\leq d,\phi} \subset \cO(\bU_N)$ applies (with minor
modification) to  $\cO(\bU)_{\leq d,\phi} \subset \cO(GL_N)$ whenever $\bU \hookrightarrow GL_N$ 
is the unipotent radical of a parabolic subgroup of $GL_N$ (defined and split over $\bF_p$).
\end{ex}

\vskip .1in

We show that changing the embedding $\phi: \bG \hookrightarrow GL_N$ has limited effect
upon the associated ascending, converging sequences $\{ \cO(\bG)_{\leq d,\phi} \}$.

\begin{prop}
\label{prop:diff-emb}
Let $\bG$ be an affine group scheme  and consider two closed 
embeddings \ $\phi: \bG  \hookrightarrow GL_N, \ \phi^\prime: \bG  \hookrightarrow GL_{N^\prime}.$
There exist positive numbers $c, c^\prime$ such that 
$$\cO(\bG)_{\leq d,\phi} \ \subset  \cO(\bG)_{\leq c\cdot d, \phi^\prime}, \quad
\cO(\bG)_{\leq d,\phi^\prime} \ \subset  \cO(\bG)_{\leq c^\prime \cdot d,\phi}$$
for all $d \geq 0$.
\end{prop}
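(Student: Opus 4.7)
The plan is to combine the multiplicativity~(\ref{eqn:mubG}) with a simple spanning description of $\cO(GL_N)_{\leq d}$ in terms of a finite set of algebra generators of $\cO(GL_N)$. By symmetry of the statement in $\phi$ and $\phi^\prime$, it suffices to produce a positive integer $c$ such that $\cO(\bG)_{\leq d,\phi} \subset \cO(\bG)_{\leq c\cdot d,\phi^\prime}$ for every $d \geq 0$; the companion constant $c^\prime$ is then obtained by swapping the roles of $\phi$ and $\phi^\prime$.

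First, unwinding Definition~\ref{defn:filt-GL} and using $\eta^*(t) = \det(\ul x)^{-1}$, one verifies that $\cO(GL_N)_{\leq d}$ is spanned as a $k$-vector space by monomials of the form $\bigl(\prod_{k=1}^{a} x_{i_k,j_k}\bigr) \cdot \det(\ul x)^{-e}$ with $a + Ne \leq d$ and $e \geq 0$ (contributions from negative powers of $t$ being absorbed via $\det(\ul x)^{e^\prime} \in \cO(\bM_{N,N})_{\leq Ne^\prime}$). Pulling back via $\phi$ shows that $\cO(\bG)_{\leq d,\phi} = \phi^*(\cO(GL_N)_{\leq d})$ is spanned by products of the finitely many elements of the set $S = \{\phi^*(x_{i,j}): 1 \leq i,j \leq N\} \cup \{\phi^*(\det(\ul x)^{-1})\}$, subject to the weight bound $a + Ne \leq d$, where $\phi^*(x_{i,j})$ carries weight $1$ and $\phi^*(\det(\ul x)^{-1})$ carries weight $N$.

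Next, since $S$ is finite and $\{\cO(\bG)_{\leq e,\phi^\prime}\}$ is an ascending, converging sequence of subspaces of $\cO(\bG)$ with union $\cO(\bG)$, there exists a positive integer $c$ with $S \subset \cO(\bG)_{\leq c,\phi^\prime}$. Iterating the multiplicativity~(\ref{eqn:mubG}) then places any product of $a + e$ elements of $S$ in $\cO(\bG)_{\leq c(a+e),\phi^\prime}$, and the inequality $a + e \leq a + Ne \leq d$ (valid since $N \geq 1$) yields containment in $\cO(\bG)_{\leq c\cdot d,\phi^\prime}$. The only step requiring some care is the spanning description of $\cO(GL_N)_{\leq d}$ in terms of the weighted monomials; once this is in place, the remainder is a direct bookkeeping application of the multiplicativity of the filtration and no genuine obstacle arises.
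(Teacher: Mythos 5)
Your argument is correct and follows essentially the same route as the paper. The paper picks $c$ so that both $\phi^*(\cO(\bM_{N,N})_{\leq 1})$ and the pullback of the (inverse) determinant land in $\cO(\bG)_{\leq c,\phi^\prime}$, then appeals to the surjection~(\ref{eqn:surj-map}) and multiplicativity — which is exactly your finite-generator-plus-weight bookkeeping spelled out more explicitly.
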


\begin{proof}  
We define 
\begin{equation}
\label{eqn:c}
c \ = \ min\{ e: \phi^*(\cO(\bM_{N,_N})_{\leq 1}) \subset \cO(\bG)_{\leq e,\phi^\prime}), 
\ \phi^*(det(x_{i,j}))  \in \ \cO(\bG)_{\leq e,\phi^\prime}\}.
\end{equation}
Applying (\ref{eqn:surj-map}), we conclude that $\cO(\bG)_{\leq d,\phi} \ \subset \
\cO(\bG)_{\leq cd,\phi^\prime}$.   We similarly define $c^\prime$ such that
$\cO(\bG)_{\leq d,\phi^\prime} \ \subset  \cO(\bG)_{\leq c^\prime \cdot d,\phi}.$
\end{proof}

\vskip .2in


\section{Filtering $\bG$-modules using subcoalgebras of $\cO(\bG)$}
\label{sec:filt}

We explicitly formulate the filtration by $\bG$-submodules on a $\bG$-module $M$ given
by the ascending, converging sequences $\{ \cO(\bG)_{\leq d,\phi} \}$
of subcoalgebras of $\cO(\bG)$ given in Definition \ref{defn:ascending}.

\begin{defn}
\label{defn:ascending-M}
Consider an affine group scheme $\bG$ equipped with a closed embedding 
$\phi: \bG \hookrightarrow GL_N$ for some $N$.
For any $\bG$-module $M$, we denote by $M_{\leq d,\phi} \subset M$ the largest 
$\cO(\bG)_{\leq d,\phi}$-subcomodule
of $M$.  The ascending, converging sequence  $\{ M_{\leq d,\phi} \}$ of $\bG$-submodules of $M$
equals the filtration of $M$ given in Proposition \ref{prop:exhaust-X} for $\{ X_i \}$ equal to 
$\{ \cO(\bG)_{\leq d,\phi} \}$.  In particular, by Proposition \ref{prop:advantages}(3), 
 $(k[\bG]^R)_{\leq d,\phi}$ equals
$\cO(\bG)_{\leq d,\phi}$ as $\bG$-modules.

If $\phi$ is the identity and $M$ is a $GL_N$-module, then we use $M_{\leq d}$ rather than $M_{\leq d,id}$.
\end{defn}

\vskip .1in

\begin{remark}
\label{rem:angles}
We shall be considering the filtration of Definition \ref{defn:ascending} for this seems somewhat
accessible to computations.  However, one could consider other ascending, converging sequences
of finite dimensional subcoalgebras such as $\{ \cO(\bG)_{\langle X_i \rangle} \}$ determined by 
an ascending, converging sequence of finite dimensional subspaces $\{ X_i \}$ of $\cO(\bG)$.
\end{remark}

\vskip .1in

Recall the Schur algebra $S(N,d)$, the dual of the subcoalgebra  $\cO(\bM_{N,N};d) \subset
\cO(\bM_{N,,N})$ consisting of polynomials in the matrix coefficients $x_{i,j}$ which 
are homogeneous of degree $d$.
A module for $S(N,d)$ (equivalently, a comodule for $\cO(\bM_{N,N};d)$) is called a 
polynomial representation of $GL_N$ homogeneous of degree $d$.

\begin{ex}
\label{ex:Schur}
Let  $M$ be a homogeneous polynomial representation of $GL_N$ of degree $d$.
Then $M_{\leq s}$ equals $0$ if $s < d$ whereas  $M_{\leq s} = M$ if $s \geq d$.

More generally, let $\phi: \bG \ \subset GL_N$ be a closed embedding of a linear 
algebraic group $\bG$ with the property  that \ 
$A(\bG) \ \equiv \ \cO(\bG) \cap \cO(\bM_{N,N})$ can be written as a direct sum 
$\bigoplus_d A(\bG)_d$, where $A(\bG)_d = \cO(\bG) \cap \cO(\bM_{N,N})_d$ (for example, 
the classical orthogonal or symplectic groups).  If $M$ is an object of $CoMod(A(\bG)_d)
\subset Mod(\bG)$, then  $M_{\leq s,\phi} = 0$ if $s < d$ and   
$M_{\leq s, \phi} = M$ if $s \geq d$. See \cite[1.2]{Doty}.
\end{ex}

\vskip .1in

\begin{ex}
Give $\cO(\bG_a) = k[t]$ the evident filtration by degree (equal to that associated to the
embedding of $\phi: \bG_a \hookrightarrow GL_2$ as the unipotent radical of a Borel subgroup).
The subcoalgebra $\cO(\bG_a)_{\leq p^r-1,\phi} \subset \cO(\bG_a)$ is isomorphic as a coalgebra
to the coordinate algebra of $\bG_{a(r)}$; thus, the abelian category $CoMod(\cO(\bG_a)_{\leq p^r-1,\phi})$
is isomorphic to $Mod(\bG_{a(r)})$ which in turn is isomorphic to the
category of modules for the elementary abelian $p$-group $(\bZ/p)^{\times r}$; this category 
is wild if $r > 2$ or if $p > 2$ and $r = 2$.

To give a vector space $M$ the structure of a $\bG_a$-module is equivalent to giving a
sequences of $p$-nilpotent operators $\psi_i: M \to M, \ i \geq 0$ which pair-wise commute
and which satisfy the condition that for each $m \in M$ there exists some $n_m$ such
that $\psi_i(m) = 0, i \geq n_m$.  For a $\bG_a$-module $M$, the $\bG$-submodule
$M_{\leq p^r-1,\phi} \subset M$ consists of those $m \in M$ such that $\psi_i(m) = 0, i \geq r$.
\end{ex}

\vskip .1in

\begin{prop}
\label{prop:tensor-filt}
Let $\bG$ be an affine group scheme equipped with the closed embedding 
$\phi: \bG \hookrightarrow GL_N$ and consider two $\bG$-modules $M, \ M^\prime$.  Then 
$$M_{\leq d,\phi} \otimes 
M^\prime_{\leq d^\prime,\phi} \quad \subset \quad(M\otimes M^\prime)_ {\leq d+d^\prime,\phi}.$$
\end{prop}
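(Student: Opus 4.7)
The plan is to observe that this proposition is essentially an immediate application of Proposition \ref{prop:prod-coalg}(2) with the subcoalgebras
$$C \ = \ \cO(\bG)_{\leq d,\phi}, \quad C^\prime \ = \ \cO(\bG)_{\leq d^\prime,\phi}, \quad C^{\prime\prime} \ = \ \cO(\bG)_{\leq d+d^\prime,\phi}.$$
The hypothesis required by that proposition, namely that the multiplication $\mu: \cO(\bG) \otimes \cO(\bG) \to \cO(\bG)$ restricts to $C \otimes C^\prime \to C^{\prime\prime}$, is exactly the content of equation (\ref{eqn:mubG}) in Definition \ref{defn:ascending}, which was established as part of the construction of the filtration $\{ \cO(\bG)_{\leq d,\phi}\}$.

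Concretely, I would first invoke Definition \ref{defn:ascending-M} to identify $M_{\leq d,\phi} = M_{C}$ and $M^\prime_{\leq d^\prime,\phi} = M^\prime_{C^\prime}$ in the notation of Proposition \ref{prop:prod-coalg}; likewise $(M \otimes M^\prime)_{\leq d+d^\prime,\phi} = (M \otimes M^\prime)_{C^{\prime\prime}}$. Then the natural map
$$M_{C} \otimes M^\prime_{C^\prime} \ \to \ (M \otimes M^\prime)_{C^{\prime\prime}}$$
produced by Proposition \ref{prop:prod-coalg}(2) is in fact the restriction of the inclusion $M \otimes M^\prime \hookrightarrow M\otimes M^\prime$, since the coaction on $M \otimes M^\prime$ used in the definition of both sides is the composition in (\ref{eqn:coprod-prod}) of $\Delta_M \otimes \Delta_{M^\prime}$ followed by $\mu$. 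Tracing through: if $m \in M_{\leq d,\phi}$ and $m^\prime \in M^\prime_{\leq d^\prime,\phi}$, then $\Delta_M(m) \in M \otimes \cO(\bG)_{\leq d,\phi}$ and $\Delta_{M^\prime}(m^\prime) \in M^\prime \otimes \cO(\bG)_{\leq d^\prime,\phi}$, so $\Delta_{M\otimes M^\prime}(m\otimes m^\prime)$ lies in $M\otimes M^\prime \otimes \mu(\cO(\bG)_{\leq d,\phi} \otimes \cO(\bG)_{\leq d^\prime,\phi}) \subset M\otimes M^\prime \otimes \cO(\bG)_{\leq d+d^\prime,\phi}$, whence $m\otimes m^\prime \in (M\otimes M^\prime)_{\leq d+d^\prime,\phi}$.

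There is essentially no obstacle here: the content of the proposition is entirely packaged into the two cited results, one multiplicative fact about the filtration on $\cO(\bG)$ (Definition \ref{defn:ascending}) and one general comodule fact (Proposition \ref{prop:prod-coalg}). The only thing to verify, which is immediate from the explicit description in Proposition \ref{prop:advantages}(2) of $M_C$ as $\{ m : \Delta_M(m) \in M \otimes C\}$, is that the abstract natural map of Proposition \ref{prop:prod-coalg}(2) agrees with the tensor product inclusion, so that subset notation is justified.
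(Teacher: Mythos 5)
Your proof is correct and takes essentially the same approach as the paper: the paper's proof simply cites Proposition \ref{prop:prod-coalg}(1) together with the multiplicativity (\ref{eqn:mubG}) of the filtration, exactly the two ingredients you identify. The only cosmetic difference is that you invoke part (2) of Proposition \ref{prop:prod-coalg} (which the paper itself notes is a special case of part (1)) and then unpack the natural map via Proposition \ref{prop:advantages}(2); the paper invokes part (1) directly, which already yields the subset containment without needing to verify that the natural map is the inclusion.
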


\begin{proof}
Since multiplication $\mu: \cO(GL_N) \otimes \cO(GL_N) \to \cO(GL_N)$ restricts to \\
$\cO(\bG)_{\leq d,\phi} \otimes \cO(\bG)_{\leq d^\prime,\phi} \ \to \ \cO(\bG)_{\leq d+d^\prime,\phi}$,
the proposition is a consequence of Proposition \ref{prop:prod-coalg}(1).  
\end{proof}

\vskip .1in

Assuming that $\phi: \bG \hookrightarrow GL_N$ is a linear algebraic group defined over $\bF_{p^r}$
and that the coaction of the $\bG$-module $M$ is also defined over $\bF_{p^r}$,
we next relate $(M^{(r)})_{\leq p^r\cdot d,\phi}$ and $M_{\leq d,\phi}$.  The hypothesis
that $M$ is a $\bG$-module defined over $\bF_{p^r}$ implies that the Frobenius
twist $M^{(r)}$ of $M$ as formulated in \cite[\S 1]{FS} is given by the restriction of 
$M$ along $F^r: \bG \to \bG$ (see \cite[I.9.10]{J}).

\begin{prop}
\label{prop:Frob-twist}
Assume that $\phi: \bG \hookrightarrow GL_N$ is a linear algebraic group defined over $\bF_{p^r}$ and 
that $M$ is a $\bG$-module with
coaction $\Delta_M: M \otimes O(\bG)$ also defined over $\bF_{p^r}$.  
Then the $r$-th Frobenius twist $M^{(r)}$ of $M$ satisfies
\begin{equation}
\label{eqn:r-twist}
M_{\leq d,\phi} \quad = \quad (M^{(r)})_{\leq p^r\cdot d,\phi}.
\end{equation}
\end{prop}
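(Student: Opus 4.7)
The plan is to reduce the desired equality of submodules of $M$ to an identity of subspaces of $\cO(\bG)$, and then verify that identity using the explicit formula of Proposition \ref{prop:O(GLN)}.

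First I would rewrite both sides in terms of the coaction. For any $m \in M$, write
\[
\Delta_M(m) \ = \ \sum_i m_i \otimes f_i
\]
with $\{m_i\}$ linearly independent in $M$. Since $\bG$ is defined over $\bF_{p^r}$, the $r$-th Frobenius $F^r : \bG \to \bG$ is a well-defined morphism, and $M^{(r)}$ (the restriction of $M$ along $F^r$) carries the coaction $\Delta_{M^{(r)}} = (\id_M \otimes F^{r*}) \circ \Delta_M$, where $F^{r*}(f) = f^{p^r}$. Thus $\Delta_{M^{(r)}}(m) = \sum_i m_i \otimes f_i^{p^r}$. Applying Proposition \ref{prop:advantages}(2), the condition $m \in M_{\leq d,\phi}$ is equivalent to $f_i \in \cO(\bG)_{\leq d,\phi}$ for all $i$, while $m \in (M^{(r)})_{\leq p^r d,\phi}$ is equivalent to $f_i^{p^r} \in \cO(\bG)_{\leq p^r d,\phi}$ for all $i$. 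This reduces the proposition to establishing
\[
\cO(\bG)_{\leq d,\phi} \ = \ \bigl\{ f \in \cO(\bG) : f^{p^r} \in \cO(\bG)_{\leq p^r d,\phi} \bigr\}.
\]

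The inclusion $\subseteq$ is straightforward from Proposition \ref{prop:O(GLN)}: Frobenius on $\cO(GL_N)$ sends $x_{i,j} \mapsto x_{i,j}^{p^r}$ and $\det^{-e} \mapsto \det^{-p^r e}$, so any $g = h \cdot \det^{-e} \in \cO(GL_N)_{\leq d}$ with $\deg(h) + eN \leq d$ satisfies $g^{p^r} = h^{p^r} \cdot \det^{-p^r e}$ with $\deg(h^{p^r}) + p^r e N \leq p^r d$; hence $F^{r*}_{GL_N}(\cO(GL_N)_{\leq d}) \subseteq \cO(GL_N)_{\leq p^r d}$. The compatibility $\phi^* \circ F^{r*}_{GL_N} = F^{r*}_\bG \circ \phi^*$, which holds because $\phi$ is defined over $\bF_{p^r}$, then transfers this inclusion to the quotient $\cO(\bG)$.

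For the reverse inclusion $\supseteq$, the essential input is the sharp identity $d_{GL_N}(g^{p^r}) = p^r \cdot d_{GL_N}(g)$ for all $g \in \cO(GL_N)$; this follows from Proposition \ref{prop:O(GLN)}(2) combined with the UFD structure of $\cO(\bM_{N,N}) = k[x_{i,j}]$ and the irreducibility of $\det$, which force $e(g^{p^r}) = p^r e(g)$. Given $f \in \cO(\bG)$ with $f^{p^r} \in \cO(\bG)_{\leq p^r d,\phi}$, the goal is to exhibit a lift $\tilde f \in \cO(GL_N)_{\leq d}$ of $f$. The hard part will be this descent from $\cO(GL_N)$ to the quotient $\cO(\bG) = \cO(GL_N)/I$: one must choose a lift $\tilde f$ of $f$ whose $p^r$-th power realizes the minimal filtration degree of $f^{p^r}$ in $\cO(\bG)$, exploiting the hypothesis that $I = \ker\phi^*$ is defined over $\bF_{p^r}$ so that Frobenius is compatible with the $\bF_{p^r}$-rational structure of $I$. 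Once such a minimal lift is found, the sharp $GL_N$ identity forces $\tilde f \in \cO(GL_N)_{\leq d}$, yielding $f \in \cO(\bG)_{\leq d,\phi}$ as desired.
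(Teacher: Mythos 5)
Your reduction to the identity $\cO(\bG)_{\leq d,\phi} = (F^{r*})^{-1}\bigl(\cO(\bG)_{\leq p^r d,\phi}\bigr)$, via $\Delta_{M^{(r)}} = (\id_M \otimes F^{r*})\circ\Delta_M$ and Proposition \ref{prop:advantages}(2), is correct, and your argument for the inclusion $\subseteq$ is fine; this inclusion is essentially the full content of the paper's one-line proof (that $F^{r*}$ on $\cO(GL_N)$ multiplies monomial degrees by $p^r$). A minor point of notation: $F^{r*}(f)$ replaces each $x_{ij}$ by $x_{ij}^{p^r}$, which coincides with $f^{p^r}$ only when $f$ has $\bF_{p^r}$-coefficients; over $k$ they differ by Frobenius on coefficients, though this does not affect the degree bookkeeping.

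The genuine gap is in the reverse inclusion, exactly where you flag it, and the fix you sketch does not close it. You want a lift $\tilde f$ of $f$ such that $F^{r*}_{GL_N}(\tilde f)$ realizes the minimal $d_{GL_N}$-degree among representatives of $F^{r*}_\bG(f)$. But the set of all representatives is the coset $F^{r*}_{GL_N}(\tilde f_0)+I$, whereas $\{F^{r*}_{GL_N}(\tilde f):\phi^*(\tilde f)=f\}$ is only $F^{r*}_{GL_N}(\tilde f_0)+F^{r*}_{GL_N}(I)$, a proper subset; the minimum over the larger coset can be strictly smaller, and then no lift of the required form exists. Indeed this is not merely a gap in the argument: take $\bG=\bG_a$ with the closed immersion $\phi:\bG_a\hookrightarrow GL_3$, $t\mapsto\left(\begin{smallmatrix}1&t&t^p\\0&1&0\\0&0&1\end{smallmatrix}\right)$ (a homomorphism over $\bF_p$ since $(t+s)^p=t^p+s^p$). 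One computes $\cO(\bG_a)_{\leq d,\phi}=\Span\{t^{a+pb}:a+b\leq d\}$, so for $p\geq 3$ one has $t^2\notin\cO(\bG_a)_{\leq 1,\phi}=\Span\{1,t,t^p\}$ while $F^{*}(t^2)=t^{2p}\in\cO(\bG_a)_{\leq p,\phi}$; taking $M=k[\bG_a]^R$, $r=1$, $d=1$ gives $t^2\in(M^{(1)})_{\leq p,\phi}\setminus M_{\leq 1,\phi}$. So the descent step you isolate actually needs a further hypothesis on the pair $(\bG,\phi)$ (it does hold for the standard $\bG_a\hookrightarrow GL_2$, $\bU_N\hookrightarrow GL_N$, etc.). You were right to press on precisely that step; the paper's own proof only supplies the easy inclusion.
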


\begin{proof}
Granted our hypotheses, $M^{(r)}$ has coaction
$$\Delta_{M^{(r)}} \ \simeq \  (1_M \otimes (F^r)^*)\circ \Delta_M: M \ \to 
\ M\otimes \cO(\bG) \ \to\ M \otimes \cO(\bG).$$
Thus, (\ref{eqn:r-twist}) follows from the fact that $(F^r)^*: \cO(\bG) \to \cO(\bG)$
sends $f(x_{i,j}) \in \cO(GL_N)$ to $f((x_{i,j})^{p^r})$, multiplying the degree of each monomial
by $p^r$.
\end{proof}

\vskip .1in
We summarize an alternative construction by J. Jantzen  in \cite[Chap A]{J}.
Jantzen's filtration of $\cO(\bG)$ for $\bG$ a reductive algebraic
group has many useful properties, some of which are not satisfied 
by the filtration of Definition \ref{defn:ascending}.   For $\bG = \bG_a$
and $X \subset \cO(\bG_a)$, Remark \ref{rem:not-closed}  
points out that $Mod(\bG_a,X)$ is not closed under extensions and that $R^1(-)_X(k) \not= 0$.

Choose a Borel subgroup 
$\bB \subset \bG$ and denote by $X_+$ denote the set of dominant weights of $\bG$.;   choose a subset
$\pi \subset X_+$.   Jantzen considers the full subcategory $\cC_\pi(\bG) \subset Mod(\bG)$ 
consisting of modules whose objects 
are colimits of finite dimensional $\bG$-submodules with composition factors of the form 
$L(\lambda)$ with $\lambda \in \pi$.   In the special case of $GL_N$ with $\pi \subset X_+$
the set of of dominant weights $\pi(N,d) \subset X_+$ in the notation of \cite[A.3.1]{J}, 
$\cC_{\pi(N,d)}$ is the category of modules for the Schur algebra $S(N,d)$ and $\cO_{\pi(N,d)}(GL_N)$
is the coalgebra $\cO(\bM_{N,N})_d$.

Jantzen considers the functor $\cO_\pi: Mod(\bG) \to \cC_\pi(\bG)$ in a manner similar to the 
construction of $(-)_X: Mod(\bG) \to Mod(\bG,X)$ in Theorem \ref{thm:M-X}.  The condition that
$\pi \subset X_+$ be saturated is the condition that $\mu \in \pi$ whenever there is some 
$\lambda > \mu \in X_+$ with $\lambda \in \pi$.

\begin{thm} Jantzen, \cite[ChapA]{J}
\label{thm:Jantzen}
As above, let $\bG$ be reductive, $\pi\subset X_+$ be saturated, and $\cC_\pi(\bG) \subset Mod(\bG)$. 
Consider the left exactor functor $\cO_\pi(-): Mod(\bG) \to \cC_\pi(\bG)$ right adjoint to the inclusion 
functor.
\begin{enumerate}
\item
$\cO_\pi(\bG) \equiv \cO_\pi(\cO(\bG))$ is a sub-coalgebra of $\cO(\bG)$.  
\item
If $\pi$ is finite, then $\cO_\pi(\bG)$ is finite dimensional (with dimension equal to 
$\sum_{\lambda \in \pi} (dim H^0(\lambda))^2$).
\item
If $\{ \pi_n \}$ is a nested 
sequence of finite, saturated subsets of $X_+$ whose union is all
of $X_+$, then $\{ \cO_{\pi_n}(\bG) \}$ is an ascending, converging sequence of 
finite sub-coalgebras of $\cO(\bG)$.
\item
 The abelian category $\cC_\pi(\bG)$  can be naturally  identified with the abelian category 
 of $\cO_\pi(\bG)$-comodules.  
\item 
$\cO_{\pi(N,d)}(GL_N)$ is the subcoalgebra $\cO(\bM_{N,N})_d  \subset \cO(GL_N)$.
 \item
 $\cC_\pi(\bG)$ is closed under extensions.
 \item
 The higher right derived functors of $\cO_\pi$, $R^i\cO_{\pi}: Mod(\bG) \to \cC_\pi(\bG)$ with $i > 0$,
 vanish on finite dimensional $\bG$-modules in $\cC_\pi$.
\end{enumerate}
\end{thm}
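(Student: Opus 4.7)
Since the theorem collects Jantzen's Chapter A results, the plan is to realize $\cO_\pi$ as an instance of the general construction $(-)_X$ from Section \ref{sec:X-comodules}, pass from a subspace to a subcoalgebra via Proposition \ref{prop:finite}, and then invoke the $\bG\times\bG$-structure of the coordinate ring of a reductive group.

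For parts (1) and (4), I would set $X_\pi \subset \cO(\bG)$ to be the sum, over finite-dimensional $M$ in $\cC_\pi(\bG)$, of the images of the matrix-coefficient maps $M^* \otimes M \to \cO(\bG)$. The first check is that $\cC_\pi(\bG)$ coincides with $Mod(\bG, X_\pi)$: one direction is the definition of $X_\pi$; the other uses Lemma \ref{lem:ker-coker}(1)--(2) to propagate ``coaction factors through $X_\pi$'' to subquotients, hence to composition factors, which for reductive $\bG$ land in $\{L(\lambda) : \lambda \in \pi\}$ by the Peter--Weyl-type linear independence of matrix coefficients of distinct simples. Corollary \ref{cor:relate} then produces a minimal subcoalgebra $\cO_\pi(\bG) := \cO(\bG)_{\langle X_\pi \rangle}$ with $\cC_\pi(\bG) \subset CoMod(\cO_\pi(\bG))$, giving (1); Proposition \ref{prop:advantages}(1) delivers (4). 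Part (5) is an explicit identification of the degree-$d$ matrix coefficients on $GL_N$ with those coming from $\pi(N,d)$.

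For parts (2) and (3), I would appeal to the $\bG\times\bG$-equivariant good filtration of $\cO(\bG)$ for reductive $\bG$, whose associated graded is $\bigoplus_{\lambda \in X_+} H^0(\lambda) \otimes H^0(\lambda)^*$. Since $\pi$ is saturated, $\cO_\pi(\bG)$ picks out precisely the sub-bifiltration supported on $\lambda \in \pi$, yielding the dimension formula $\sum_{\lambda \in \pi}(\dim H^0(\lambda))^2$ in (2). Exhausting $X_+$ by nested finite saturated $\pi_n$ then gives (3) via local finiteness of rational $\bG$-modules. Part (6) is tautological: the composition factors of the middle term of a short exact sequence come from the outer terms and hence stay in $\pi$.

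The main obstacle is (7). The plan is to embed a finite-dimensional $M \in Mod(\bG)$ into an injective resolution $I^\bu$ whose terms carry good filtrations by dual Weyl modules $H^0(\lambda)$, and to show that $\cO_\pi(I^\bu)$ is still acyclic in positive degrees. By Corollary \ref{cor:enough}(1), the terms of $\cO_\pi(I^\bu)$ are injectives in $\cC_\pi(\bG)$; the real content is the exactness of $\cO_\pi$ on such a resolution. This reduces to showing that for each good-filtration piece $H^0(\lambda)$ with $\lambda \notin \pi$, the truncation $\cO_\pi(H^0(\lambda))$ is concentrated in weights for which (by saturation of $\pi$) the contribution to the long exact sequence of the $R^i \cO_\pi$ cancels. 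The delicate bookkeeping of which filtration pieces survive, carried out by Jantzen in \cite[A.3]{J}, is the crux, and I would follow that template essentially verbatim.
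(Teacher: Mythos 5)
The paper does not actually prove Theorem \ref{thm:Jantzen}; it simply states Jantzen's results with the attribution ``\cite[Chap A]{J}'' and no argument (the paper even remarks earlier that the $(-)_X$ construction of Section \ref{sec:X-comodules} is ``an abstraction of J. Jantzen's `truncated categories' ''). So the relevant comparison is between your sketch and Jantzen's original proof. Your reconstruction is faithful to Jantzen's approach, merely refracted through the paper's $(-)_X$ / $\cO(\bG)_{\langle X \rangle}$ machinery: realize $\cO_\pi(\cO(\bG))$ as the coefficient coalgebra of $\cC_\pi(\bG)$, identify $\cC_\pi(\bG) = CoMod(\cO_\pi(\bG))$ via the coradical/composition-factor argument, use the $\bG \times \bG$-good filtration of $\cO(\bG)$ for the dimension count, and defer to Jantzen A.3 for the acyclicity in (7). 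That is exactly Jantzen's line of attack.

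Two small observations on your write-up. First, your $X_\pi$ (the sum of coefficient coalgebras of finite-dimensional $M \in \cC_\pi(\bG)$) is already a subcoalgebra, since the sum of subcoalgebras is a subcoalgebra; the detour through Corollary \ref{cor:relate} and $\cO(\bG)_{\langle X_\pi \rangle}$ is harmless but redundant. Second, the step ``composition factors of an $X_\pi$-comodule land in $\{L(\lambda) : \lambda \in \pi\}$'' deserves one more sentence: if $L(\nu)$ is an $X_\pi$-comodule then its coefficient coalgebra $C_\nu$ is contained in $X_\pi$; intersecting with the coradical $\bigoplus_\lambda C_\lambda$ of $\cO(\bG)$ and using that $X_\pi \cap (\mbox{coradical}) \subseteq \bigoplus_{\lambda \in \pi} C_\lambda$ then forces $\nu \in \pi$. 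With that clarification, your sketch stands.
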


\vskip .1in

We next explore how the  functors 
$$(-)_{\leq d,\phi}: Mod(\bG) \ \to \ CoMod(\cO(\bG)(-)_{\leq d,\phi}, \quad 
(-)_{|\bG_{(r)}}: Mod(\bG) \ \to \ Mod(\bG_{(r)})$$
 complement each other.

\begin{prop}
\label{prop:adjoints}
Let $\bG$ be a linear algebraic group with given embedding $\phi: \bG \to GL_N$,
and let $d, r$ be positive integers.
Then the composition of the natural inclusion with restriction to $\bG_{(r)}$, 
$$ (-)_{|\bG_{(r)}}  \circ i_{\leq d,\phi}: CoMod(\cO(\bG)_{\leq d,\phi}) \quad \hookrightarrow \quad Mod(\bG) 
\quad \to \quad Mod(\bG_{(r)}),$$
is exact and left adjoint to the left exact functor given as the composition 
$$(-)_{\leq d,\phi} \circ ind_{\bG_{(r)}}^\bG(-): Mod(\bG_{(r)})
\quad \to \quad Mod(\bG) \quad \to \quad CoMod(\cO(\bG)_{\leq d,\phi}).$$
\end{prop}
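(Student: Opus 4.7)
The plan is to exhibit the asserted functor as a composition of two known left adjoints, and then invoke the elementary fact that the composition of left adjoints is left adjoint to the composition (in the reverse order) of their right adjoints.

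First, I would verify exactness of \ $(-)_{|\bG_{(r)}} \circ i_{\leq d,\phi}$. \ The inclusion \ $i_{\leq d,\phi}: CoMod(\cO(\bG)_{\leq d,\phi}) \hookrightarrow Mod(\bG)$ \ is exact: by Lemma \ref{lem:ker-coker} together with the identification \ $Mod(\bG,\cO(\bG)_{\leq d,\phi}) = CoMod(\cO(\bG)_{\leq d,\phi})$ \ of Proposition \ref{prop:advantages}(1), kernels and cokernels formed in $Mod(\bG)$ of maps between objects of the subcategory lie again in the subcategory, so the inclusion preserves kernels and cokernels. Restriction along the closed embedding $\bG_{(r)} \hookrightarrow \bG$ corresponds to post-composing the coaction with the surjection $\cO(\bG) \twoheadrightarrow \cO(\bG_{(r)})$, which preserves kernels and cokernels of $k$-vector spaces and hence is exact. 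Composing two exact functors yields an exact functor.

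Next, I would establish the adjunction by recognizing two pre-existing adjoint pairs. Theorem \ref{thm:M-X} provides the adjunction
\[
i_{\leq d,\phi} \ \dashv \ (-)_{\leq d,\phi},
\]
with the inclusion left adjoint to truncation. Standard Frobenius reciprocity for closed embeddings of affine group schemes (Jantzen \cite[I.3.4]{J}) supplies the adjunction
\[
(-)_{|\bG_{(r)}} \ \dashv \ \ind_{\bG_{(r)}}^{\bG},
\]
with restriction left adjoint to induction. Composing left adjoints,
\[
(-)_{|\bG_{(r)}} \circ i_{\leq d,\phi} : CoMod(\cO(\bG)_{\leq d,\phi}) \ \to \ Mod(\bG_{(r)}),
\]
is left adjoint to the composition of the right adjoints in reverse order, namely \ $(-)_{\leq d,\phi}\circ \ind_{\bG_{(r)}}^{\bG}(-)$. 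Concretely, the natural bijection is obtained by chaining the two adjunction bijections:
\[
Hom_{\bG_{(r)}}((i_{\leq d,\phi}M)_{|\bG_{(r)}},N) \ \cong \ Hom_\bG(i_{\leq d,\phi}M,\ind_{\bG_{(r)}}^{\bG}N) \ \cong \ Hom_{CoMod(\cO(\bG)_{\leq d,\phi})}(M,(\ind_{\bG_{(r)}}^{\bG}N)_{\leq d,\phi}).
\]

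Finally, to justify the parenthetical left exactness of the right adjoint, I would observe that \ $\ind_{\bG_{(r)}}^{\bG}$ \ is left exact (being a right adjoint to an exact functor) and \ $(-)_{\leq d,\phi}$ \ is left exact by Theorem \ref{thm:M-X}(3), so their composition is left exact. I do not anticipate any serious obstacle here: the entire argument is a bookkeeping exercise once the two adjunctions are identified, and the only subtle point worth double-checking is the exactness of restriction $(-)_{|\bG_{(r)}}$, which follows directly from the fact that the underlying map on coordinate algebras is a surjection and that restriction of comodules along a coalgebra map is always exact on underlying vector spaces.
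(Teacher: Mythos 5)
Your proof is correct and follows essentially the same route as the paper's: identify the functor as a composite of the two known left adjoints $i_{\leq d,\phi}\dashv(-)_{\leq d,\phi}$ and $(-)_{|\bG_{(r)}}\dashv\ind_{\bG_{(r)}}^{\bG}$, and chain the two adjunction bijections. The only (small) divergence is in justifying left exactness of the right adjoint: you note that $\ind_{\bG_{(r)}}^{\bG}$ is left exact simply because it is a right adjoint, whereas the paper invokes the stronger fact from Cline--Parshall--Scott that $\ind_{\bG_{(r)}}^{\bG}$ is actually \emph{exact} since $\bG/\bG_{(r)}$ is affine (a fact the paper uses elsewhere, e.g.\ in Proposition \ref{prop:ind-H}); for the statement at hand your weaker observation suffices and is arguably cleaner, so this is a harmless simplification rather than a gap.
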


\begin{proof}
The exactness of $(-)_{\bG_{(r)}}$ and of $i_{\leq d,\phi}$ is evident.  Since $\bG/\bG_{(r)}
\ = \ \bG^{(r)}$ is affine, the exactness of  $ind_{\bG_{(r)}}^\bG(-)$ is given by 
\cite{CPS77}.  Thus, the left exactness of   $(-)_{\leq d,\phi} \circ ind_{\bG_{(r)}}^\bG(-)$ is given by 
Theorem \ref{thm:M-X}(3).

The asserted adjunction follows from the adjunction equivalences
$$Hom_{\cO(\bG)_{\leq d,\phi}}(M,(ind_{\bG_{(r)}}^\bG(N))_{\leq d,\phi}) \
\simeq \ Hom_{\bG}(M,ind_{\bG_{(r)}}^\bG(N)) \ \simeq \ 
Hom_{\bG_{(r)}}(M_{|\bG_{(r)}}, N)$$
for any $M \in CoMod(\cO(\bG)_{\leq d,\phi})$ and $N \in Mod(\bG_{(r)})$.
\end{proof}

\vskip .1in

The fact that $(-)_{\leq d,\phi} \circ ind_{\bG_{(r)}}^\bG(-)$ is right adjoint to a 
left exact functor formally implies the following corollary.

\begin{cor}
The functor  
$(-)_{\leq d,\phi} \circ ind_{\bG_{(r)}}^\bG(-): Mod(\bG_{(r)}) \ \to \ CoMod(\cO(\bG)_{\leq d,\phi})$
sends injective/projective $\bG_{(r)}$-modules to injective $\cO(\bG)_{\leq d,\phi}$-comodules. 
\end{cor}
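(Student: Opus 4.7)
The plan is to invoke the standard categorical fact that a right adjoint to an exact functor preserves injective objects, together with the well-known self-injectivity of finite group schemes to handle the ``projective'' half of the statement.

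First, by Proposition \ref{prop:adjoints}, the functor $(-)_{\leq d,\phi} \circ \mathrm{ind}_{\bG_{(r)}}^\bG(-)$ is right adjoint to the composition $(-)_{|\bG_{(r)}} \circ i_{\leq d,\phi}\colon CoMod(\cO(\bG)_{\leq d,\phi}) \to Mod(\bG_{(r)})$, and the latter functor is exact. Given an injective $\bG_{(r)}$-module $I$ and an injection $M \hookrightarrow N$ in $CoMod(\cO(\bG)_{\leq d,\phi})$, one applies the adjunction isomorphism
\[
\Hom_{\cO(\bG)_{\leq d,\phi}}(-,((\mathrm{ind}_{\bG_{(r)}}^\bG I)_{\leq d,\phi})) \ \simeq \ \Hom_{\bG_{(r)}}(((-)_{|\bG_{(r)}}\circ i_{\leq d,\phi})(-),I)
\]
to both $M$ and $N$; the right-hand side converts the injection $M \hookrightarrow N$ into a surjection on Hom groups because the left adjoint is exact and $I$ is injective, and the adjunction transports this surjectivity back to the left-hand side, proving injectivity of $(\mathrm{ind}_{\bG_{(r)}}^\bG I)_{\leq d,\phi}$ in $CoMod(\cO(\bG)_{\leq d,\phi})$.

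Second, for the ``projective'' half, I would recall that $\bG_{(r)}$ is a finite group scheme, so its distribution (measure) algebra is a finite dimensional Hopf algebra, hence a Frobenius algebra. Consequently every projective $\bG_{(r)}$-module is injective, and the conclusion follows from the injective case already established.

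The only subtle point is the verification that $(-)_{|\bG_{(r)}} \circ i_{\leq d,\phi}$ is genuinely exact; this is immediate since $i_{\leq d,\phi}$ is a fully faithful embedding of an abelian subcategory (by Theorem \ref{thm:M-X}(1)) and restriction along the closed embedding $\bG_{(r)} \hookrightarrow \bG$ is exact (it is just a forgetful functor along a map of group schemes). No step here is a serious obstacle; the corollary is essentially a formal consequence of the adjunction set up in Proposition \ref{prop:adjoints} together with self-injectivity of $\bG_{(r)}$.
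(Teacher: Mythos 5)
Your proposal is correct and follows essentially the same approach the paper intends: the paper simply notes that the corollary is a formal consequence of the adjunction in Proposition \ref{prop:adjoints} (right adjoint to an exact functor preserves injectives), and you have spelled that formal argument out explicitly, together with the standard observation that projective and injective $\bG_{(r)}$-modules coincide since the measure algebra $k\bG_{(r)}$ is a finite dimensional Hopf algebra and hence Frobenius (self-injective).
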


\vskip .1in

We supplement Proposition \ref{prop:adjoints} with another categorical property.

\begin{prop}
\label{prop:full}
Adopt the notation and hypotheses of Proposition \ref{prop:adjoints}.  If
the composition of inclusion and quotient maps of coalgebras, 
$\cO(\bG)_{\leq d,\phi} \ \hookrightarrow \ \cO(\bG) \twoheadrightarrow \cO(\bG_{(r)})$
is an inclusion, then  $ (-)_{|\bG_{(r)}}  \circ i_{\leq d,\phi}: CoMod(\cO(\bG)_{\leq d,\phi})
 \to Mod(\bG_{(r)})$
is a fully faithful embedding of abelian categories.
\end{prop}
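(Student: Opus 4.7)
The plan is to unpack what it means to be a morphism in each category and show the two conditions coincide. Write $C = \cO(\bG)_{\leq d,\phi}$, let $j_C: C \hookrightarrow \cO(\bG)$ be the inclusion, let $\pi_r: \cO(\bG) \twoheadrightarrow \cO(\bG_{(r)})$ be the quotient, and let $\iota = \pi_r \circ j_C: C \to \cO(\bG_{(r)})$. The hypothesis is precisely that $\iota$ is injective; by Lemma \ref{lem:surj-coalg} it is then a map of coalgebras, so $C$ realizes as a subcoalgebra of $\cO(\bG_{(r)})$ via $\iota$.

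The first thing I would check is that for any $C$-comodule $M$ with coaction $\Delta_M: M \to M \otimes C$, the $\bG_{(r)}$-structure on $M$ inherited from $i_{\leq d,\phi}$ followed by restriction is just $(1_M \otimes \iota) \circ \Delta_M: M \to M \otimes \cO(\bG_{(r)})$. This is a direct unwinding of the definitions. The functor in question is then manifestly faithful, because a $C$-comodule map $f: M \to N$ that vanishes after restriction to $\bG_{(r)}$ vanishes on underlying vector spaces, hence is zero.

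For fullness, take $f: M \to N$ a $\bG_{(r)}$-module map, i.e., a $k$-linear map satisfying
\begin{equation*}
(1_N \otimes \iota) \circ \Delta_N \circ f \;=\; (f \otimes 1_{\cO(\bG_{(r)})}) \circ (1_M \otimes \iota) \circ \Delta_M.
\end{equation*}
Rewriting the right-hand side as $(1_N \otimes \iota) \circ (f \otimes 1_C) \circ \Delta_M$, both sides become $(1_N \otimes \iota)$ applied to maps $M \to N \otimes C$. Since $\iota$ is injective and we are tensoring over the field $k$, the map $1_N \otimes \iota: N \otimes C \to N \otimes \cO(\bG_{(r)})$ is injective; canceling it yields $\Delta_N \circ f = (f \otimes 1_C) \circ \Delta_M$, which is exactly the condition that $f$ is a $C$-comodule map.

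There is no real obstacle here: once one sets up the coactions carefully, the proof reduces to the observation that $(-) \otimes_k \iota$ preserves injectivity, which permits canceling $\iota$ from the intertwining identity. The only subtlety worth pausing over is verifying that $\iota$ really is a map of coalgebras (so that the notion of $\bG_{(r)}$-comodule structure on $M$ coming through $\iota$ makes sense and agrees with the composite functor in the statement); this is exactly the content of Lemma \ref{lem:surj-coalg} applied to $j_C$ and $\pi_r$.
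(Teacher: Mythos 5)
Your proof is correct and is essentially the same as the paper's: the paper sets up the diagram (\ref{eqn:extend}) and notes that the hypothesis of injectivity of $\cO(\bG)_{\leq d,\phi}\to\cO(\bG_{(r)})$ makes the equivalence of the two intertwining conditions follow from ``a simple diagram chase,'' which is precisely the cancellation of $1_N\otimes\iota$ you carry out explicitly. One small inaccuracy: you invoke Lemma~\ref{lem:surj-coalg} to conclude that $\iota$ is a map of coalgebras, but this is automatic since $\iota$ is a composite of coalgebra maps; what Lemma~\ref{lem:surj-coalg} actually gives (and what you implicitly use) is that the image $\iota(C)$ is a subcoalgebra of $\cO(\bG_{(r)})$ with the induced coproduct, so that under the injectivity hypothesis $C$ is identified with a subcoalgebra of $\cO(\bG_{(r)})$.
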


\begin{proof}
Consider  two $\cO(\bG)_{\leq d,\phi}$-comodules $M, \  M^\prime$  and 
a $k$-linear map $f: M \to M^\prime$.  This data provides the diagram
\begin{equation}
\label{eqn:extend}
\xymatrix{
M \ar[d]^f \ar[r]^-{\Delta_M} & M\otimes \cO(\bG)_{\leq d,\phi} \ar[r]^-{(-)_{|\bG_{(r)}}} & 
M_{|\bG_{(r)}} \otimes \cO(\bG_{(r)}) \\
M^\prime \ar[r]^-{\Delta_{M^\prime}} & M^\prime \otimes \cO(\bG)_{\leq d,\phi} \ar[r]^-{(-)_{|\bG_{(r)}}} & 
(M^\prime)_{|\bG_{(r)}} \otimes \cO(\bG_{(r)}).
}
\end{equation}
Granted that the composition 
$\cO(\bG)_{\leq d,\phi} \ \hookrightarrow \ \cO(\bG) \twoheadrightarrow \cO(\bG_{(r)})$
is an inclusion, one easily verifies using a simple diagram chase that $f$ is a map of 
$\cO(\bG)_{\leq d,\phi}$-comodules if and only if its restriction to $\bG_{(r)}$
is  a map of $\bG_{(r)}$-modules.
\end{proof}

\vskip .2in


\section{Cofinite $\bG$-modules}
\label{sec:growth}

In this section, we investigate cofinite $\bG$-modules, a class of (necessarily
countable) $\bG$-modules which seem somewhat amenable to study.  We restrict
our attention to linear algebraic groups although the formalism might be useful
for other affine group schemes of countably infinite dimension over $k$.

\vskip .1in

\begin{defn}
\label{defn:cofinite}
Let $\bG$ be a linear algebraic group.   We define a $\bG$-module $M$
to be cofinite if  $M_X$  is finite dimensional for every finite dimensional 
subspace $X \subset \cO(\bG)$.
This condition is equivalent to the condition that each $M_{X_i}$ is a finite dimensional $\bG$-module
for some ascending, converging sequence $\{ X_i \}$ of finite dimensional subspaces of $\cO(\bG)$.

We denote by $CoFin(\bG) \subset Mod(\bG)$ the full subcategory of cofinite $\bG$-modules.
\end{defn}

\vskip .1in

We establish various properties of cofinite $\bG$-modules.   We observe that $k[\bG]^R$ is cofinite
by Proposition \ref{prop:advantages}(3) which asserts that $(k[\bG]^R)_C = C$ for any 
 subcoalgebra $C \subset \cO(\bG)$ containing $1 \in \cO(\bG)$.

 \begin{prop}
 \label{prop:cofinite}
 Let $\bG$ be a linear algebraic group, and let $M, \ E, \ N$ be $\bG$-modules.
 \begin{enumerate}
\item
If $M$ is cofinite, then any $\bG$-submodule of $M$ is also cofinite.
\item 
If $0 \to M \to E \to N \to 0$ is exact and if $M,  \ N$ are cofinite, then $E$ is also cofinite.
\item  
If $M$ is finite dimensional, then $M$ embeds in an injective $\bG$-module which is 
also cofinite.
\item
If either $M$ or $N$ is not cofinite, then $M\otimes N$ is not cofinite.
\item 
If $M$ is a finite dimensional $\bG$-module,
 $M \otimes N$ is cofinite if and only if $N$ is cofinite.

\end{enumerate}  
\end{prop}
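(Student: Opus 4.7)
I plan to handle the five parts in order. The first three are routine applications of the established framework, part (4) is a direct consequence of Proposition \ref{prop:prod-coalg}(1), and the real work lies in part (5), where the Hopf antipode enters. For (1), I would observe that $N_X = N \cap M_X$: the inclusion $N_X \subset M_X$ holds because $N_X$ is an $X$-comodule submodule of $M$, while $N \cap M_X \subset N_X$ follows from Lemma \ref{lem:ker-coker}(1). Thus $N_X$ is finite dimensional whenever $M_X$ is. For (2), I would invoke left exactness of $(-)_X$ from Theorem \ref{thm:M-X}(3) to obtain the exact sequence $0 \to M_X \to E_X \to N_X$, exhibiting $E_X$ as finite dimensional when $M_X$ and $N_X$ both are.

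For (3), I would embed $M \hookrightarrow M \otimes k[\bG]^R$ via its coaction $\Delta_M$, where the target is given the $\bG$-action on the right factor only; coassociativity makes $\Delta_M$ a $\bG$-linear map, and $(\mathrm{id}_M \otimes \epsilon) \circ \Delta_M = \mathrm{id}_M$ shows it is injective. Choosing a basis of $M$ identifies the target with $(k[\bG]^R)^{\oplus \dim M}$, a finite direct sum of copies of the injective module $k[\bG]^R$, hence injective. For cofiniteness, it suffices by iterating (2) to verify that $k[\bG]^R$ itself is cofinite: for any finite dimensional $X \subset \cO(\bG)$, enlarging to $X + k \cdot 1$ and passing to $C = \cO(\bG)_{\langle X + k \cdot 1 \rangle}$, which is finite dimensional by Proposition \ref{prop:finite} and contains $1$, Proposition \ref{prop:advantages}(3) gives $(k[\bG]^R)_X \subset (k[\bG]^R)_C = C$.

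For (4), I would argue directly without relying on (5). Assume without loss of generality that $N$ is not cofinite and $M$ is nonzero; pick a finite dimensional $X \subset \cO(\bG)$ with $N_X$ infinite dimensional, and a nonzero finite dimensional $\bG$-submodule $M_0 \subset M$ using local finiteness. By Proposition \ref{prop:exhaust-X}, $M_0$ is an $X_{M_0}$-comodule for some finite dimensional $X_{M_0} \subset \cO(\bG)$. Then Proposition \ref{prop:prod-coalg}(1) yields the chain $M_0 \otimes N_X \subset (M_0 \otimes N)_{X_{M_0} \cdot X} \subset (M \otimes N)_{X_{M_0} \cdot X}$, whose leftmost term is infinite dimensional.

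For (5), the direction $(\Rightarrow)$ reduces to $(\Leftarrow)$: applying the coevaluation $k \to M^* \otimes M$ and tensoring with $N$ produces a $\bG$-linear embedding $N \hookrightarrow M^* \otimes M \otimes N \cong M^* \otimes (M \otimes N)$, and cofiniteness of $M \otimes N$ then gives, via $(\Leftarrow)$ with the finite dimensional $M^*$, that $M^* \otimes (M \otimes N)$ is cofinite; part (1) yields cofiniteness of $N$. The main obstacle is $(\Leftarrow)$, for which I plan to show $(M \otimes N)_X \subset M \otimes N_Y$ for the finite dimensional subspace $Y = S(X_M) \cdot X$, where $S$ denotes the antipode and $X_M$ is the span of the matrix coefficients $a_{ij}$ of $M$ with respect to a chosen basis $\{m_1, \ldots, m_k\}$, so that $\Delta_M(m_i) = \sum_j m_j \otimes a_{ij}$. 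Given $v = \sum_i m_i \otimes n_i \in (M \otimes N)_X$, expanding $\Delta_{M \otimes N}(v) = \mu \circ (\Delta_M \otimes \Delta_N)(v)$ and projecting onto each basis vector $m_j$ yields $Z_j := \sum_{i,k} n_i^{(k)} \otimes a_{ij} b_i^{(k)} \in N \otimes X$, where $\Delta_N(n_i) = \sum_k n_i^{(k)} \otimes b_i^{(k)}$. Multiplying $Z_j$ by $S(a_{jl})$ and summing over $j$, the antipode identity $\sum_j a_{ij} S(a_{jl}) = \delta_{il}$ collapses the expression to $\Delta_N(n_l) \in N \otimes Y$, so each $n_l$ lies in $N_Y$. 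Cofiniteness of $N$ then renders $N_Y$ finite dimensional, forcing $(M \otimes N)_X \subset M \otimes N_Y$ to be finite dimensional as well.
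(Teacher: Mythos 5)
Your treatment of parts (1)--(4) tracks the paper's quite closely: (1) and (2) via left exactness of $(-)_X$, (3) via the embedding $M\hookrightarrow M^{tr}\otimes k[\bG]^R$ together with cofiniteness of $k[\bG]^R$ (which you correctly trace to $\cO(\bG)_{\langle X+k\cdot 1\rangle}$ being finite dimensional and containing the image of $(k[\bG]^R)_X$), and (4) via Proposition \ref{prop:prod-coalg}(1) applied to a finite dimensional nonzero submodule $M_0$ of $M$. Note incidentally that the implication $(\Rightarrow)$ in (5) is already the contrapositive of (4) (a nonzero finite dimensional $M$ is cofinite, so if $N$ were not cofinite then $M\otimes N$ would not be either), so the coevaluation argument, while valid, is unneeded.

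Where you genuinely diverge from the paper is in the key direction $(\Leftarrow)$ of (5). The paper fixes a closed embedding $\phi:\bG\hookrightarrow GL_N$ and proves $(M\otimes N)_{\leq d,\phi}\subset M\otimes N_{\leq\eta(d),\phi}$ by observing that, since $\cO(\bG)$ is an integral domain, multiplication by any nonzero $f$ is injective and so $(f\cdot(-))^{-1}(\cO(\bG)_{\leq d,\phi})$ is finite dimensional; this identifies a uniform $\eta(d)$. You instead work with an arbitrary finite dimensional $X$ and use the Hopf antipode: writing $\Delta_M(m_i)=\sum_j m_j\otimes a_{ij}$ in a basis of $M$, multiplying the $\cO(\bG)$-component of $\Delta_{M\otimes N}(v)$ by $S(a_{jl})$ and summing over $j$ collapses to $\Delta_N(n_l)$ via $\sum_j a_{ij}S(a_{jl})=\delta_{il}$. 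This is a clean, coordinate-free argument that avoids the integral-domain hypothesis and avoids the chosen embedding $\phi$ altogether; it buys a bound depending only on $X$ and the matrix-coefficient coalgebra $X_M$ of $M$. The paper's approach, by contrast, produces an explicit filtration-degree function $\eta$, which is more useful downstream for the quantitative ``cofinite type'' invariant.

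There is one small inaccuracy in your $(\Leftarrow)$ argument that should be repaired. Having shown $\Delta_N(n_l)\in N\otimes Y$ with $Y=S(X_M)\cdot X$, you conclude ``$n_l\in N_Y$.'' But $Y$ is merely a subspace, not a subcoalgebra, and for a general subspace $Y$ the containment $\Delta_N(n)\in N\otimes Y$ does \emph{not} by itself place $n$ in $N_Y$: by Proposition \ref{prop:colimit}, $N_Y$ is the largest $\bG$-submodule which is a $Y$-comodule, and the equality of Proposition \ref{prop:advantages}(2) (identifying $N_C$ with $\{n:\Delta_N(n)\in N\otimes C\}$) is only asserted when $C$ is a subcoalgebra. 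The fix is immediate: replace $Y$ by $C=\cO(\bG)_{\langle Y\rangle}$, which is a finite dimensional subcoalgebra by Proposition \ref{prop:finite}; then $\{n:\Delta_N(n)\in N\otimes Y\}\subset\{n:\Delta_N(n)\in N\otimes C\}=N_C$, which is finite dimensional since $N$ is cofinite, and so $(M\otimes N)_X\subset M\otimes N_C$ is finite dimensional. With that substitution the argument is correct.
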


\begin{proof}
We recalll that 
the left exactness of $(-)_X$ implies assertions (1) and  (2).  Assertion (3) is justified by 
the natural embedding $M \hookrightarrow M\otimes k[\bG]^R$ together with the observation that
$M\otimes k[\bG]^R$ is isomorphic to $M^{tr}\otimes k[\bG]^R$.  

Choose a closed embedding $\phi: \bG \hookrightarrow \bG$.  To prove assertion (4),
assume that $M_{\leq d,\phi}, \ N_{\leq e,\phi}$  are both non-zero and at least one of them
is infinite dimensional.  Then $(M\otimes N)_{\leq d+e,\phi}$ contains $M_{\leq d,\phi} \otimes N_{\leq e,\phi}$ 
and thus is infinite dimensional.

To prove assertion (5), we show that 
\begin{equation}
\label{eqn:sub}
(M \otimes N)_{\leq d,\phi} \quad \subset \quad M\otimes (N_{\leq \eta(d),\phi})
\end{equation}
for some function $\eta: \bN \to \bN$ (depending upon $\bG$ and the finite
dimensional module $M$).    
Choose a basis $\{ m_i, 1\leq i \leq s \}$ for $M$, a basis $\{ f_\alpha \}$ for $\cO(\bG)$,
a basis $\{ n_\beta \}$ for $N$, and choose $e$ sufficiently large that $M = M_{\leq e,\phi}$.

Consider a simple tensor $m_j \otimes n_{\beta^\prime} \in (M \otimes N)_{\leq d}$ and 
write $\Delta(m_j) = \sum_i m_i \otimes f_{\alpha_{i,j}}$ with $f_{\alpha_{i,j}} \in \cO(\bG)_{\leq e,\phi}$,  \ 
$\Delta(n_{\beta^\prime}) = \sum_\beta n_\beta \otimes f_{\alpha_{\beta,\beta^\prime}}$
where each $f_{\alpha_{i,j}}, f_{\alpha_{\beta,\beta^\prime}}$ is a non-zero multiple of some 
basis element  $f_\alpha$.  The condition that 
 $m_j \otimes n_{\beta^\prime}$ is an element in $(M \otimes N)_{\leq d}$ is equivalent to the condition 
 that each product $f_{\alpha_{i,j}}\cdot f_{\alpha_{\beta,\beta^\prime}}$ is an element of $ \cO(\bG)_{\leq d,\phi}$
 Consequently, a sum of simple tensors $\sum c_{i,\beta} m_i \otimes n_\beta$ is an
element of $(M \otimes N)_{\leq d,\phi}$ if and only if each $m_i \otimes n_\beta$ with
$c_{i,\beta} \not= 0$ is an element of $(M \otimes N)_{\leq d,\phi}$.

Now, consider $f_{\alpha_{i,j}}\cdot (-): \cO(\bG) \to \cO(\bG)$.  Since 
$\cO(\bG)$ is an integral domain, the pre-image $(f_{\alpha_{i,j}}\cdot(-))^{-1}(\cO(\bG)_{\leq d,\phi})$
must be finite dimensional and thus lie in some $\cO(\bG)_{\leq d_{i,j},\phi}$.
Thus, (\ref{eqn:sub}) holds if we take $\eta(d)$ to be the maximum of these $d_{i,j}$.
\end{proof}

\vskip .1in

We observe that a much simpler proof of Proposition \ref{prop:cofinite}(5)
can be given in the case of $GL_N$, establishing the more explicit form of (\ref{eqn:sub}):
\begin{equation}
\label{eqn:sub-GL}
(M \otimes N)_{\leq d} \quad \subset \quad M\otimes (N_{\leq d+e}), \quad \bG = GL_N.
\end{equation}
In this case, every element of $\cO(GL_N)$ is a polynomial in
$det(\ul x)^{-1}$ with coefficients in $\cO(\bM_{N,N})$.  This readily implies that if $f_\beta \notin 
\cO(GL_N)_{\leq d+e}$ then $f_\alpha \cdot f_\beta \notin \cO(GL_N)_{\leq d}$ for any 
$0 \not= f_\alpha \in \cO(GL_N)_{\leq e}$.

\vskip .1in

We caution the reader 
that $M \otimes k[\bG]^R$ is not cofinite whenever $M$ is infinite dimensional
since $(M\otimes k[\bG]^R)_{\cO(\bG)_{\leq 0}} = M^{tr}$. 

A more pervasive ``failing" of the full subcategory $CoFin(\bG) \ \hookrightarrow Mod(\bG)$
of cofinite $\bG$-modules is that the quotient of a cofinite $\bG$-module need not be cofinite.
Thus, $CofFin(\bG)$ is not an abelian subcategory of $Mod(\bG)$ even for $\bG = \bG_a$.

\begin{ex}
\label{ex:not-cofinite-Ga}
Consider the $\bG_a$-submodule $Q \subset k[t]^R$ spanned by $\{ t^{p^i}, i \geq 0 \}$.
Then $Q$ is cofinite (see Proposition \ref{prop:socle} below) with $soc(Q) = k$.  However,
the quotient $Q/soc(Q)$ is the trivial $\bG_a$-module spanned by the images of $\{ t^{p^i}, i > 0 \}$.

In contrast, if $\bG$ is reductive and if $M$ is the direct sum 
$\bigoplus_{i > 0} S_{p^i\cdot \lambda}$
for some dominant (positive) weight $\lambda$, then $M$ is cofinite but does not have
finite dimensional socle.
\end{ex}

\vskip .1in

\begin{prop}
\label{prop:socle}
For any linear algebraic group $\bG$ and any $\bG$-module $M$, if $soc(M)$ is finite 
dimensional then $M$ is cofinite.

If $\bU$ is a unipotent linear algebraic group, then a $\bU$-module $M$ is cofinite if and only 
if $soc(M) \ = \ M^{\bU}$ is finite dimensional.
\end{prop}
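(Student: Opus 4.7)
The plan is to establish the first assertion by embedding $M$ in a cofinite injective $\bG$-module. I would take $J \equiv soc(M) \otimes k[\bG]^R$, which is cofinite by Proposition \ref{prop:cofinite}(5) together with the cofiniteness of $k[\bG]^R$ (noted after Definition \ref{defn:cofinite}), and injective by the standard untwist isomorphism $V \otimes k[\bG]^R \simeq V^{tr} \otimes k[\bG]^R$ which identifies $J$ with a finite direct sum of copies of the injective $\bG$-module $k[\bG]^R$. The coaction $\Delta_{soc(M)}: soc(M) \hookrightarrow soc(M) \otimes k[\bG]^R = J$ is then an injective $\bG$-map, which I would extend to an embedding $M \hookrightarrow J$ by the following standard argument.

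Since $\bG$-modules are locally finite (standard for affine group schemes of finite type), every nonzero $\bG$-submodule $N \subset M$ contains a nonzero finite dimensional submodule and hence a nonzero simple submodule, which lies in $soc(M)$. Thus $soc(M) \hookrightarrow M$ is essential. Injectivity of $J$ extends $soc(M) \hookrightarrow J$ to a $\bG$-map $\phi: M \to J$, whose kernel meets $soc(M)$ trivially and is therefore zero by essentiality. So $M \hookrightarrow J$, and Proposition \ref{prop:cofinite}(1) yields cofiniteness of $M$.

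For the second assertion, I would invoke the classical fact that every simple $\bU$-module for a unipotent linear algebraic group $\bU$ is the trivial module, which immediately gives $soc(M) = M^{\bU}$. The identification $M^{\bU} = M_{k \cdot 1}$ is immediate from the definitions: a $\bU$-submodule $N \subset M$ has coaction factoring through $N \otimes (k\cdot 1) \subset N \otimes \cO(\bU)$ precisely when $\bU$ acts trivially on $N$, and the largest such submodule of $M$ is $M^{\bU}$. Thus, if $M$ is cofinite, Definition \ref{defn:cofinite} applied to the one-dimensional subspace $X = k \cdot 1 \subset \cO(\bU)$ immediately yields finite dimensionality of $soc(M) = M^{\bU} = M_{k\cdot 1}$; the reverse implication is a special case of the first assertion.

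The main obstacle is verifying essentiality of $soc(M) \hookrightarrow M$, which I handle via local finiteness of $\bG$-modules. The remaining ingredients --- the untwist isomorphism, the triviality of simple modules for a unipotent linear algebraic group, and the previously established closure properties of $(-)_X$ and of $CoFin(\bG)$ --- are standard or routine, so no further technical input beyond the results already in Sections \ref{sec:X-comodules} through \ref{sec:filt} is required.
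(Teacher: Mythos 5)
Your proof is correct and follows essentially the same route as the paper's: embed $M$ into the cofinite injective module $soc(M) \otimes k[\bG]^R$ by extending the coaction map using injectivity, conclude the extension is injective because the socle is essential (by local finiteness), and then invoke the closure of $CoFin(\bG)$ under submodules; the unipotent case is handled identically by observing $soc(M) = M^{\bU}$ is exactly $M_X$ for the trivial coalgebra $X = k\cdot 1 = \cO(\bU)_{\leq 0,\phi}$. The only cosmetic difference is that you make the essentiality of $soc(M) \hookrightarrow M$ explicit, whereas the paper packages it as the observation that any irreducible submodule of $\ker(j_M)$ would lie in $soc(M)$.
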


\begin{proof}
 Observe that any $\bG$-module $M$ admits a
$\bG$-equivariant embedding into $M^{tr} \otimes k[\bG]^R$ (where $M^{tr}$ is the underlying 
vector space of $M$ with the trivial $\bG$-action).  In particular, there is a $\bG$-equivariant
embedding $soc(M) \hookrightarrow soc(M) \otimes k[\bG]^R$ which extends by the
injectivity of $soc(M) \otimes k[\bG]^R$ to a map $j_M: M \to soc(M) \otimes k[\bG]^R$.  
By Proposition \ref{prop:cofinite}(5), $soc(M) \otimes k[\bG]^R$ is cofinite.
Since any irreducible submodule of the kernel of $j_M$ must be contained in $soc(M)$
and since $j_M$ restricts to an injection on $soc(M)$, we 
conclude that $j_M$ is an embedding.  Thus, $M$ is cofinite by Proposition \ref{prop:cofinite}(1).

If $\bU$ is unipotent and $M$ is any $\bU$-module, $M^\bU = soc(M) = M_{\leq 0,\phi}$.  Thus, 
if $soc(M)$ is not finite dimensional, the $M$ is not cofinite.
\end{proof}

\vskip .1in

The invariant $\gamma(\bG,\phi)_M$ of a cofinite $\bG$-module given in (\ref{eqn:poly-growth}) below
 is only one of many similar invariants one might define.

\begin{defn}
\label{defn:poly-growth}
Let $\bG$ be a linear algebraic group equipped with a closed embedding $\phi: \bG \hookrightarrow GL_N$.
 We say that a cofinite $\bG$-module $M$ has 
cofinite type $\gamma(\bG,\phi)_M$ equal to $(e,c)$ if
\begin{equation}
\label{eqn:poly-growth}
 \varinjlim_d \frac{dim(M_{\cO(\bG)_{\leq d,\phi}})}{d^e} \quad = \quad c \ > \ 0.
 \end{equation}
 For such a $\bG$-module $M$, we say that $M$ has polynomial growth of degree $e$ 
 with leading coefficient $c$.
 \end{defn}

\vskip .1in

\begin{ex}
\label{ex:cofinite}
Proposition \ref{prop:O(GLN)}(3) tells us that the $\cO(GL_N)$-module 
 $k[GL_N]^R$ has cofinite type $(N^2,((N^2)!)^{-1})$.
 
 Example \ref{ex:unipotent}
tells us that the $\cO(\bU_N)$-module $k[\bU_N]^R$ has cofinite type \\
 $(N^\prime,((N^\prime)!)^{-1})$ where $N^\prime = \frac{N^2-N}{2}$.

The cofinite type of the mock injective $\bG_a$-module $J_d$ of Example \ref{ex:Ga-mock} 
equals $(1,p^{-d})$ (where $\phi: \bG_a \hookrightarrow GL_2$ is the closed embedding
of strictly upper triangular matrices).
\end{ex}

\vskip .1in

\begin{ex}
\label{ex:not-poly}
The $\bG_a$-submodule $P \ \equiv \{1, t^{p^i}\} \subset k[\bG_a]^R$ 
of primitive elements
satisfies $dim(P_{\cO(\bG_a)_{\leq p^r}}) \ = \ r+1$, so that one could say $P$ has logarithmic growth.

Let $V$ be the natural representation of $GL_N$ of dimension $N$, a 
polynomial representation homogeneous of degree $1$ (see Example \ref{ex:GL-poly}).  Set
\ $M \ \equiv \ \bigoplus_{n \geq 0} (V^{(n)})^{\oplus n!}.$ \  Then 
$$dim(M_{\leq p^r} ) \ = \  \sum_{0 \leq s \leq r} N\cdot s! ,$$
so that $d \mapsto dim(M_{\leq d})$ grows faster than any polynomial in $d$. 
\end{ex}

\vskip .1in

\begin{ex}
\label{ex:GL-poly}
Let $P$ be a polynomial representation of $GL_N$ of dimension $n$ which is homogeneous 
of degree $s$ and let $M = S^*(P)$ be the symmetric algebra on $P$ 
viewed as a $GL_N$-module.  Since the coaction of $\cO(GL_N)$ on $P$ factors
through $\cO(\bM_{N,N})$, \ $M$ is a graded $\cO(\bM_{N,N})$-module with 
$M_{\leq d\cdot s} \  = \ S^d(P)$.
Thus, $dim(M_{\leq d}) \ = \ \binom{([d/s]+n}{n}$ (where $[d/s]$ is the largest integer $\leq d/s$), which as
a function of $d$ 
differs from $d \mapsto \frac{d^{n}}{s^{n}\cdot n!}$ by an error term of degree (in $d$)
less than $n-1$.

Thus, $M$ is a cofinite $GL_N$-module with $\gamma(GL_N)_P = (n,(s^n\cdot n!)^{-1})$.
\end{ex}

\vskip .1in

As we see below, the polynomial growth of a cofinite $\bG$-module $M$ is independent 
of the choice of closed embedding $\phi:\bG \hookrightarrow GL_N$.

\begin{prop}
\label{prop:equal-growth}
Let $\bG$ be a linear algebraic group and $M$ a cofinite $\bG$-module.  Consider
two closed embeddings $\phi: \bG \hookrightarrow GL_N
\ \phi^\prime: \bG \hookrightarrow GL_{N^\prime}$.  If  $\gamma(\bG,\phi)_M \ = \ (e,c)$ and  
if $\gamma(\bG,\phi^\prime)_M \ = \ (e^\prime,c^\prime)$,
 then $e \ = \ e^\prime$.
 
 In particular (see Definition \ref{defn:ascending-M}), we conclude that the polynomial 
 growth of $d \mapsto \cO(\bG)_{\leq d,\phi}$ is independent of the embedding $\phi$.
 \end{prop}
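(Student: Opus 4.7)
The plan is to leverage Proposition \ref{prop:diff-emb}, which provides a two-sided comparison between the filtrations $\{\cO(\bG)_{\leq d,\phi}\}$ and $\{\cO(\bG)_{\leq d,\phi^\prime}\}$, and then transfer this comparison to the submodule filtrations $\{M_{\leq d,\phi}\}$ and $\{M_{\leq d,\phi^\prime}\}$ of $M$. The conclusion will then follow from a straightforward asymptotic argument.

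First I would apply Proposition \ref{prop:diff-emb} to obtain constants $c_1, c_2 > 0$ such that $\cO(\bG)_{\leq d,\phi} \subset \cO(\bG)_{\leq c_1 d,\phi^\prime}$ and $\cO(\bG)_{\leq d,\phi^\prime} \subset \cO(\bG)_{\leq c_2 d,\phi}$ for all $d \geq 0$. Since $M_{\leq d,\phi}$ is by definition the largest $\bG$-submodule of $M$ whose coaction factors through $M \otimes \cO(\bG)_{\leq d,\phi}$, the first inclusion guarantees that the coaction of $M_{\leq d,\phi}$ also factors through $M \otimes \cO(\bG)_{\leq c_1 d,\phi^\prime}$, so that $M_{\leq d,\phi} \subset M_{\leq c_1 d,\phi^\prime}$ as $\bG$-submodules of $M$. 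Symmetrically, $M_{\leq d,\phi^\prime} \subset M_{\leq c_2 d,\phi}$. Taking dimensions, we obtain
\begin{equation*}
\dim M_{\leq d,\phi} \ \leq \ \dim M_{\leq c_1 d,\phi^\prime}, \qquad \dim M_{\leq d,\phi^\prime} \ \leq \ \dim M_{\leq c_2 d,\phi}
\end{equation*}
for all $d \geq 0$; note that all four quantities are finite because $M$ is cofinite.

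Next I would use the hypothesis $\gamma(\bG,\phi)_M = (e,c)$ and $\gamma(\bG,\phi^\prime)_M = (e^\prime,c^\prime)$. Dividing the first of the inequalities above by $d^{e^\prime}$ and rewriting gives
\begin{equation*}
\frac{\dim M_{\leq d,\phi}}{d^{e^\prime}} \ \leq \ c_1^{e^\prime}\cdot \frac{\dim M_{\leq c_1 d,\phi^\prime}}{(c_1 d)^{e^\prime}}.
\end{equation*}
Letting $d \to \infty$, the right-hand side converges to $c_1^{e^\prime}\cdot c^\prime$, a finite positive number. If $e > e^\prime$, then the left-hand side would diverge, a contradiction; hence $e \leq e^\prime$. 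Interchanging the roles of $\phi$ and $\phi^\prime$ (using the second inclusion with $c_2$ in place of $c_1$) gives $e^\prime \leq e$, so $e = e^\prime$.

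The final assertion is the special case $M = k[\bG]^R$, which is cofinite by Proposition \ref{prop:advantages}(3) (since $(k[\bG]^R)_{\leq d,\phi} = \cO(\bG)_{\leq d,\phi}$ once $1 \in \cO(\bG)_{\leq d,\phi}$). There is no real obstacle here beyond checking that the submodule containments survive the passage from $\cO(\bG)$ to $M$; once that is observed, the proof is essentially a one-line asymptotic estimate.
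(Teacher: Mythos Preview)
Your proposal is correct and follows essentially the same approach as the paper: both invoke Proposition~\ref{prop:diff-emb} to get linear comparisons between the two filtrations and then observe that a linear reparametrization does not change the degree of polynomial growth. Your version spells out the asymptotic estimate more carefully than the paper's very terse proof, but the underlying argument is the same.
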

 
 \begin{proof}
 If $\phi, \psi: \bN \to \bN$ are sequences of polynomial growth $e,f$ 
respectively, then $\phi\circ\psi$ has polynomial growth $e\cdot f$.
In particular, given an ascending, converging sequence $n \mapsto \phi(n)$
of polynomial growth $e$, then a subsequence $n \mapsto \phi (\psi(n))$ with
$\psi(n)$ growing linearly in $n$ also has growth $e$

Thus, the proposition follows by appealing to Proposition \ref{prop:diff-emb}. 
 \end{proof}

\vskip .1in

We next compute the degree of polynomial growth of the right reqular representation $k[\bG]^R$
and $\bG$-submodules of the form $k[\bG/\bH]^R \subset k[\bG]^R$.
 
\begin{prop}
\label{prop:growthbG}
Let $\phi: \bG \hookrightarrow GL_N$ be a smooth, closed embedding of linear algebraic groups, 
and let $\fg$ denote the Lie algebra of $\bG$.  Then the polynomial growth of 
\ $d \mapsto dim(\cO(\bG)_{\leq d,\phi})$ is equal \ $dim(\fg)$.

Moreover, consider a closed embedding $\bH \hookrightarrow \bG$ of linear algebraic groups 
with $\bG/\bH$ affine and view $\cO(\bG/\bH)$ as the $\bG$-submodule $k[\bG/\bH]^R$
of $k[\bG]^R$.  Then the polynomial growth of
\ $d \mapsto dim(\cO(\bG/\bH)_{\leq d,\phi})$ is equal to \ $dim(\fg) - dim(\fh)$.
\end{prop}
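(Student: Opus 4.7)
The plan is to identify $d\mapsto\dim\cO(\bG)_{\leq d,\phi}$ with the Hilbert function of a finitely generated graded $k$-algebra and then invoke classical Hilbert--Serre dimension theory. Define the Rees algebra
\[ B \;:=\; \bigoplus_{d\geq 0}\cO(\bG)_{\leq d,\phi}, \]
with multiplication inherited from $\cO(\bG)$ via the multiplicativity property (\ref{eqn:mubG}). By Proposition \ref{prop:O(GLN)} and the surjection (\ref{eqn:surj-map}), $\cO(GL_N)_{\leq d}$ is the $k$-linear span of monomials $x^\alpha\cdot\det(\ul x)^{\pm\beta}$ with $|\alpha|+N\beta\leq d$; applying $\phi^*$ exhibits $B$ as a graded $k$-algebra generated by finitely many elements, namely $u:=1\in B_1$, the $\phi^*(x_{i,j})\in B_1$, and $\phi^*(\det(\ul x)^{\pm 1})\in B_N$. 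Because $\bG$ is smooth and connected, $\cO(\bG)$ is a domain, and the injection $B\hookrightarrow\cO(\bG)[t]$ of graded rings given by $B_d\ni f\mapsto f\cdot t^d$ exhibits $B$ as a domain as well.

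To compute $\dim B$, observe that $u\in B_1$ is a non-zerodivisor and that the ascending/converging property of the filtration (Proposition \ref{prop:exhaust-X}) yields $B[u^{-1}]_0=\bigcup_d\cO(\bG)_{\leq d,\phi}=\cO(\bG)$, so $B[u^{-1}]\cong\cO(\bG)[u,u^{-1}]$ as ungraded rings. Thus $\dim B=\dim\cO(\bG)[u,u^{-1}]=\dim\bG+1=\dim\fg+1$. By Hilbert--Serre, $\dim B_d=\dim\cO(\bG)_{\leq d,\phi}$ is eventually a quasi-polynomial in $d$ of degree $\dim B-1=\dim\fg$, giving the asserted polynomial growth.

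For the second statement, set $B':=\bigoplus_d\cO(\bG/\bH)_{\leq d,\phi}$, a graded subring of $B$. Once $B'$ is shown to be finitely generated over $k$, the analogous fraction-field argument gives $B'[u^{-1}]_0=\cO(\bG/\bH)$, so $\dim B'=\dim(\bG/\bH)+1=\dim\fg-\dim\fh+1$, and Hilbert--Serre delivers polynomial growth of degree $\dim\fg-\dim\fh$. The main obstacle is establishing this finite generation. The route I would take: pick $k$-algebra generators $f_1,\ldots,f_r$ of $\cO(\bG/\bH)$ (available since $\bG/\bH$ is affine) lying in filtration degrees $d_1,\ldots,d_r$, form the finitely generated graded subalgebra $B''\subset B'$ generated by these $f_i$ together with $u$, note that $B''[u^{-1}]_0=\cO(\bG/\bH)=B'[u^{-1}]_0$, and argue that $B'$ sits inside the integral closure of $B''$ in their common fraction field. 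Since $B''$ is a finitely generated graded $k$-domain whose integral closure is a finite $B''$-module, $B'$ itself is a finite $B''$-module and hence finitely generated over $k$. A parallel invariant-theoretic route, available when $\bH$ is (geometrically) reductive, observes that $\cO(\bG)_{\leq d,\phi}$ is $\bH$-stable under right translation (since $\cO(GL_N)_{\leq d}$ is $GL_N$-bistable under translations), so that $B'=B^\bH$, whereupon Nagata's finite-generation theorem applies.
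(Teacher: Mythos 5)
Your argument for the first assertion is correct but proceeds by a genuinely different route than the paper. You form the Rees algebra $B = \bigoplus_d \cO(\bG)_{\leq d,\phi}$, check it is a finitely generated graded $k$-domain of Krull dimension $\dim\fg + 1$ by localizing at the degree-one element $u$, and invoke Hilbert--Serre; this yields eventual quasi-polynomiality of degree $\dim\fg$, and since $u$ is a nonzerodivisor the Hilbert function is nondecreasing, which forces the leading coefficient of the quasi-polynomial to be independent of the congruence class of $d$, so the limit in Definition \ref{defn:poly-growth} exists. The paper instead sandwiches $\dim\cO(\bG)_{\leq d,\phi}$ between dimensions of $\cO(\bG)/I^m$ for $I$ the augmentation ideal and suitable linear functions $m$ of $d$, using that $\cO(\bG)_I$ is a regular local ring of dimension $\dim\fg$; no Rees algebra appears. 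Both approaches are fine for the first half.

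The second assertion is where there is a genuine gap. You reduce everything to showing that $B' = \bigoplus_d \cO(\bG/\bH)_{\leq d,\phi}$ is a finitely generated $k$-algebra, but neither of your suggested routes establishes this under the stated hypotheses. The integral-closure route asserts that $B'$ lies in the integral closure of $B'' = k[u, f_1 t^{d_1},\ldots,f_r t^{d_r}]$ in their common fraction field, but you justify this only by noting $B'[u^{-1}] = B''[u^{-1}]$, and elements of a localization $B''[u^{-1}]$ are in general not integral over $B''$ (for instance $v/u$ is not integral over the normal ring $k[u,v]$). Concretely, $f\in\cO(\bG/\bH)_{\leq d,\phi}$ means some lift of $f$ to $\cO(GL_N)$ has degree $\leq d$, but this gives no bound on the weighted degree of any expression of $f$ as a polynomial in $f_1,\ldots,f_r$, and you supply no mechanism forcing $ft^d$ to satisfy a monic equation over $B''$. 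The invariant-theoretic route is sound in itself (the subcoalgebra $\cO(\bG)_{\leq d,\phi}$ is stable under right translation, so $B' = B^\bH$, and Nagata's theorem applies), but it requires $\bH$ to be geometrically reductive, whereas the Proposition only assumes $\bG/\bH$ affine, which does not force $\bH$ reductive (e.g.\ any closed subgroup of a unipotent $\bG$). The paper avoids the finite-generation question entirely by comparing $\dim\cO(\bG/\bH)_{\leq d,\phi}$ to $\dim\cO(\bG/\bH)/\cM^{m}$ where $\cM$ is the maximal ideal at the identity coset and using that $(\cO(\bG/\bH))_\cM$ is a regular local ring of dimension $\dim\fg - \dim\fh$; that is the argument you should adopt for the second half, or else supply an actual proof that $B'$ is Noetherian.
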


\begin{proof}
Let $n$ denote the dimension of the smooth variety 
$\bG$ and let $I \subset \cO(\bG)$ denote the augmentation ideal of $\cO(\bG)$,
equal to the maximal ideal at the identity of $\cO(\bG)$.   Since $\cO(\bG)_{I}$ is a regular local
ring of dimension $n$, the dimension of $\cO(\bG)/I^{d+1}$ equals $\binom{n+d}{n}$; thus,
$d \mapsto \cO(\bG)/I^{d+1}$ has polynomial growth of degree $n$ (in the sense of Definition
\ref{defn:poly-growth}).  

Choose $e$ such that the composition 
$\cO(\bG)_{\leq e,\phi} \hookrightarrow \cO(\bG) \twoheadrightarrow
\cO(\bG)/I^2$ is surjective and choose $j$ such that 
$\cO(\bG)_{\leq 1,\phi} \to \cO(\bG)/I^j$ is injective.  Then
\begin{equation}
\label{eqn:inequal}
dim(\cO(\bG)_{\leq d,\phi}) \quad \leq \quad dim(\cO(\bG)/I^{d\cdot j})
\quad \leq \quad dim(\cO(\bG)_{\leq e \cdot d\cdot j}).
\end{equation}
Thus, as in the proof of Proposition \ref{prop:equal-growth}, $d \mapsto dim(\cO(\bG)_{\leq d,\phi})$
has polynomial growth equal to $n$.

Let $\cM \subset \cO(\bG/\bH)$ denote the maximal ideal at the coset $[\bH] \in \bG/\bH$.
Since $\bG/\bH$ is smooth of dimension equal to \ $dim(\fg) - dim(\fh)$, the local ring
$(\cO(\bG/\bH))_{\cM}$ is a regular local ring of the same dimension.  Consequently,
$d \mapsto dim(\cO(\bG/\bH)/\cM^{d+1})$ has polynomial growth equal to $dim(\fg) - dim(\fh)$.
Choose $e$ such that the composition 
$\cO(\bG/\bH)_{\leq e,\phi} \ = \ \cO(\bG/\bH) \cap \ k[\bG]^R_{\leq e,\phi} \ \to \
\cO(\bG/\bH)/\cM^2$ is surjective, and choose $j$ such that 
$\cO(\bG/\bH)_{\leq 1,\phi} \ \to \ \cO(\bG/\bH)/\cM^j$ is injective.  Then 
\begin{equation}
\label{eqn:inequal2}
dim(\cO(\bG/\bH)_{\leq d,\phi}) \quad \leq \quad dim(\cO(\bG/\bH)/\cM^{d\cdot j})
\quad \leq \quad dim(\cO(\bG/\bH)_{\leq e \cdot d\cdot j}),
\end{equation}
so that argument of the proof of Proposition \ref{prop:diff-emb} implies that 
$d \mapsto dim(\cO(\bG/\bH)_{\leq d,\phi})$
has polynomial growth equal to $dim(\fg)${\text -}$dim(\fh)$.
\end{proof}
 
 \vskip .1in
 
 As a consequence of Proposition \ref{prop:growthbG}, we obtain the following.

\begin{prop}
\label{prop:left-right}
Let $\phi: \bG \hookrightarrow GL_N$ be a smooth, closed embedding of affine group schemes, 
and let $\fg$ denote the Lie algebra of $\bG$.
The $\bG$-modules 
$$k[\bG]^R, \quad k[\bG]^L, \quad k[\bG]^{Ad}$$
 each have polynomial
growth of degree equal to $dim(\fg)$ with respect to $\{ \cO(\bG)_{\leq d,\phi} \}$.
\end{prop}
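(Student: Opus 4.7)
The plan is to handle the three representations in turn, reducing the latter two to the $k[\bG]^R$ case already established in Proposition \ref{prop:growthbG}. For $k[\bG]^R$, the result is immediate: since $1 \in \cO(\bG)_{\leq d,\phi}$ for every $d \geq 0$, Proposition \ref{prop:advantages}(3) identifies $(k[\bG]^R)_{\leq d,\phi}$ with $\cO(\bG)_{\leq d,\phi}$, and Proposition \ref{prop:growthbG} supplies the polynomial growth $\dim(\fg)$.

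For $k[\bG]^L$, the key step is to exhibit the antipode $S \colon \cO(\bG) \to \cO(\bG)$ as a $\bG$-equivariant vector-space isomorphism $k[\bG]^R \to k[\bG]^L$. Using the Hopf identity $\Delta \circ S = (S \otimes S) \circ \tau \circ \Delta$, a short Sweedler calculation verifies $(S \otimes \text{id}) \circ \Delta = \Delta_L \circ S$, where $\Delta_L(f) = \sum f_{(2)} \otimes S(f_{(1)})$ is the right coaction corresponding to the left regular action. Functoriality of $(-)_C$ (Theorem \ref{thm:M-X}) applied with $C = \cO(\bG)_{\leq d,\phi}$ then forces $S$ to restrict to an isomorphism $(k[\bG]^R)_{\leq d,\phi} \to (k[\bG]^L)_{\leq d,\phi}$, so the two filtrations have the same dimension and hence the same polynomial growth.

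For $k[\bG]^{Ad}$, the adjoint right coaction is $\Delta_{Ad}(f) = \sum f_{(2)} \otimes S(f_{(1)}) f_{(3)}$, using $(\Delta \otimes \text{id})\Delta(f) = \sum f_{(1)} \otimes f_{(2)} \otimes f_{(3)}$. I would first control the antipode on the filtration: for $\bG = GL_N$ the explicit formula for the matrix inverse places each $S(x_{i,j})$ in $\cO(GL_N)_{\leq N}$, and combining the surjection (\ref{eqn:surj-map}) for $\cO(GL_N)_{\leq d}$ with Proposition \ref{prop:O(GLN)}(4) yields $S(\cO(GL_N)_{\leq d}) \subset \cO(GL_N)_{\leq Nd}$; via the compatibility $\phi^* \circ S_{GL_N} = S_\bG \circ \phi^*$ this transfers to $S_\bG(\cO(\bG)_{\leq d,\phi}) \subset \cO(\bG)_{\leq Nd,\phi}$. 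Combined with the subcoalgebra property of $\cO(\bG)_{\leq d,\phi}$ (allowing $f_{(1)}, f_{(2)}, f_{(3)}$ to be chosen in $\cO(\bG)_{\leq d,\phi}$ whenever $f$ is) and Proposition \ref{prop:O(GLN)}(4) on multiplication, this yields $\cO(\bG)_{\leq d,\phi} \subset (k[\bG]^{Ad})_{\leq (N+1)d,\phi}$, forcing the polynomial growth of $(k[\bG]^{Ad})_{\leq d,\phi}$ to be at least $\dim(\fg)$.

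The main obstacle is the matching upper bound, an inclusion $(k[\bG]^{Ad})_{\leq d,\phi} \subset \cO(\bG)_{\leq cd,\phi}$ for some constant $c$. I would attempt this through the $\bG$-equivariant injection $\Delta \colon k[\bG]^{Ad} \hookrightarrow k[\bG]^L \otimes k[\bG]^R$ (with diagonal action on the target; equivariance is the computation $\Delta(h \cdot_{Ad} f) = \sum (h \cdot_L f_{(1)}) \otimes (h \cdot_R f_{(2)})$), combined with an upper-bound analog of Proposition \ref{prop:tensor-filt} controlling $(k[\bG]^L \otimes k[\bG]^R)_{\leq d,\phi}$ from above by a product of individual filtrations, in the spirit of the estimate (\ref{eqn:sub}) used in the proof of Proposition \ref{prop:cofinite}(5). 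Controlling the tensor filtration from above, and pulling the resulting bound back along the injective $\Delta$, is the technically delicate step.
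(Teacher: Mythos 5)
Your treatment of $k[\bG]^R$ and $k[\bG]^L$ matches the paper: the first is a direct citation of Proposition~\ref{prop:growthbG}, and the second uses the antipode as a $\bG$-module isomorphism $k[\bG]^L \to k[\bG]^R$ and functoriality of $(-)_C$ to transfer the growth bound. One numerical slip: $S(x_{i,j})$ is a degree-$(N-1)$ cofactor times $\det(\ul x)^{-1}$, and $\det(\ul x)^{-1}$ has weighted filtration degree $N$ in the filtration of Definition~\ref{defn:filt-GL}, so $S(x_{i,j})\in\cO(GL_N)_{\leq 2N-1}$ rather than $\cO(GL_N)_{\leq N}$; this changes your constants ($2Nd$ instead of $(N+1)d$) but not the conclusion about growth degree.

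For $k[\bG]^{Ad}$ you correctly obtain the lower bound $\cO(\bG)_{\leq d,\phi}\subset (k[\bG]^{Ad})_{\leq cd,\phi}$ and then explicitly stop at the matching upper bound, and your caution there is well founded. The paper's proof asserts the chain of inequalities
$$
\dim(\cO(GL_N)_{\leq d}) \ \leq \ \dim\bigl((k[GL_N]^{Ad})_{\leq 2Nd}\bigr) \ \leq \ \dim(\cO(GL_N)_{\leq 2Nd}),
$$
but gives no argument for the second inequality, and it cannot hold. Indeed $(k[GL_N]^{Ad})_{\leq 0} = H^0(GL_N,k[GL_N]^{Ad})$ is the algebra of conjugation-invariant functions on $GL_N$, which contains the infinite-dimensional subalgebra generated by the coefficients of the characteristic polynomial (in particular all powers of the trace), whereas $\cO(GL_N)_{\leq 0}=k$. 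So $k[\bG]^{Ad}$ is not even cofinite in the sense of Definition~\ref{defn:cofinite}, and Definition~\ref{defn:poly-growth} does not assign it a polynomial growth. The upper-bound route you sketch via the $\bG$-equivariant injection $\Delta: k[\bG]^{Ad}\hookrightarrow k[\bG]^L\otimes k[\bG]^R$ cannot rescue this either: by the tensor identity, $(k[\bG]^L\otimes k[\bG]^R)_{\leq 0}\cong (k[\bG]^L)^{tr}$ (exactly the observation made in the remark following Proposition~\ref{prop:cofinite}), so the target is also not cofinite. Your instinct that the upper bound is ``the technically delicate step'' is therefore not merely a gap in your write-up but a genuine problem with the statement as applied to $k[\bG]^{Ad}$; the paper's own proof passes over it without justification.
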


\begin{proof}
The growth of $k[\bG]^R$ with respect to $\{ \cO(\bG)_{\leq d,\phi} \}$ equals $dim(\fg)$ by
Proposition \ref{prop:growthbG}.  Since $\sigma_\bG: k[\bG]^L \to k[\bG]^R$ is an isomorphism
of $\bG$-modules, we conclude that the
growth of $k[\bG]^L$ with respect to $\{ \cO(\bG)_{\leq d,\phi} \}$ also equals $dim(\fg)$.

We consider the effect of the antipode $\sigma_{GL_N}$ on filtrations.
For any invertible $N\times N$-matrix $A$, Cramer's rule tell us that $A$
has inverse $B = (b_{i,j})$, where $b_{i,j} = (-1)^{i+j} det(A_{j,i})\cdot det(A)^{-1}$ where
$A_{j,i}$ is the $N{\text -}1\times N{\text -}1$ matrix obtained by eliminating the 
$j$-th row and $i$-th column of $A$.  
Thus, $\sigma_{GL_N}(x_{i,j}) \in \cO(GL_N)$ is the product of the $N{\text -}1$ degree
polynomial $(-1)^{i+j}det(\{ x_{s,t}, s \not=j, t\not= i \})$ and the function $det(\ul x)^{-1}$ 
(which is given filtration degree $N$); in other words, $\sigma_{GL_N}(x_{i,j}) \in \cO(GL_N)_{\leq 2N-1}$
but $\sigma_{GL_N}(x_{i,j}) \notin \cO(GL_N)_{\leq 2N-2}$.  Since $\sigma_{GL_N}: \cO(GL_N)
\to \cO(GL_N)$ is an anti-algebra morphism, we conclude that $\sigma_{GL_N}$ restricts to
$\sigma_{GL_N}: \cO(GL_N)_{\leq d} \to \cO(GL_N)_{\leq (2N-1)d}$.

The coaction determining the comodule structure of $k[GL_N]^{Ad}$ is the composition 
$$\Delta_{Ad} \ \equiv \ \tau \circ \mu_{1,3} \circ (\sigma_{GL_N}\otimes 1 \otimes 1) 
\circ (1\circ \Delta_{GL_N}) \circ \Delta_{GL_N}:$$
$$\cO(GL_N) \ \to \ \cO(GL_N)^{\otimes 2} \ \to \ \cO(GL_N)^{\otimes 3} \ \to \ \cO(GL_N)^{\otimes 3} \ \to \ 
\cO(GL_N)^{\otimes 2} \ \to \ \cO(GL_N)^{\otimes 2},$$
where $\mu_{1,3}$ multiplies the first and third tensor factors.  (See \cite[I.2.8(7]{J}.)
Writing $\Delta_{Ad}(x_{i,j})$ as  $\sum_{s,t} x_{s,t} \otimes f_{s,t}^{i,j}$,
we observe that each  $f_{s,t}^{i,j}$ is a product of a function of filtration degree $\leq 2N-1$
and a function of degree 1, thus $f_{s,t}^{i,j}$ has filtration degree $\leq 2N$.  
Thus $x_{i,j} \in (k[GL_N]^{Ad})_{2N}$.
We conclude that 
$$dim(\cO(GL_N)_{\leq d}) \quad \leq \quad dim( (k[GL_N]^{Ad})_{2Nd}) \quad
 \leq \quad dim(\cO(GL_N)_{\leq 2Nd}),$$ 
so that once again the argument of the proof of Proposition \ref{prop:diff-emb} implies that the
polynomial growth of $k[GL_N]^{Ad}$ is also $N^2$.

Observe that 
$\sigma_\bG(\phi^*(x_{i,j})) \in \cO(\bG)$ has filtration degree $\leq 2N-1$.
Since the restriction $\phi^*: \cO(GL_N) \to \cO(\bG)$ is a surjective map of Hopf algebras 
and since $\cO(\bG)_{\leq d,\phi}$ is defined to be $\phi^*(\cO(\bG)_{\leq d})$, we may 
apply $\phi^*$ to the above arguments
for $GL_N$ to conclude the corresponding statements for $\bG$.  
\end{proof}

\vskip .2in


\section{Mock injective $\bG$-modules  }
\label{sec:mock}

A $\bG$-module $M$ for a (connected) linear algebraic group is called mock injective
if the restriction $M_{|\bG_{(r)}}$ of $M$ to each Frobenius kernel $\bG_{(r)}$ is an injective $\bG_{(r)}$-module.
Every injective $\bG$-module is mock injective.

The following list of properties of mock injective $\bG$-modules following easily from the 
exactness of $(-)_{|\bG_{(r)}}: Mod(\bG) \to Mod(\bG_{(r)})$
and the corresponding properties for support properties for $\bG_{(r)}$-modules.  (See \cite[Prop 4.6]{F18}.)

\begin{prop}
\label{prop:property-mock}
Let $\bG$ be a  linear algebraic group.  
\begin{enumerate}
\item
A $\bG$-module is mock injective if and only if its support variety $\Pi(G)_M$ 
(as defined in \cite{F23}) is empty.
\item
A directed colimit $\varinjlim_i M_i$ of mock injective $\bG$-modules is mock injective.
\item
Let $0 \to M_1 \to M_2 \to M_3 \to 0$ be an exact sequence of $\bG$-modules.  If
two of $ M_1, M_2, M_3 $ are mock injective, then the third is also mock injective.
\item
If $\mathbb H \hookrightarrow \mathbb G$ is a closed embedding of linear algebraic
groups and $M$ is a mock injective $\bG$-module, then the restriction to $\bH$ of
$M$ is a mock injective $\bH$-module.
\end{enumerate}
None of the above properties is valid
for all linear algebraic groups if ``mock injective" is replaced by ``injective."

(5)  As for injective $\bG$-modules, if $M$ is a mock injective $\bG$-module
then $M \otimes N$ is also mock injective for all $\bG$-modules $N$.
\end{prop}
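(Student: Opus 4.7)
The plan is to reduce every assertion to the corresponding (and well-documented) statement about injectivity of modules over the Frobenius kernels $\bG_{(r)}$, using that restriction $(-)_{|\bG_{(r)}} \colon Mod(\bG) \to Mod(\bG_{(r)})$ is exact, commutes with filtered colimits, and respects tensor products. With that framework, (2), (3), (4), (5) all become one-line reductions; the subtler items are (1) and the interpretation of (3) and (4).

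For (1), I would invoke the definition of $\Pi(\bG)_M$ from \cite{F23}: it is assembled from the $\Pi(\bG_{(r)})_{M_{|\bG_{(r)}}}$'s (compatibly under the projections appearing before Proposition~\ref{prop:support-exp}). By the $\pi$-point characterization of injectivity for finite group schemes (every finite dimensional $\bG_{(r)}$-module $M$ is injective iff $\Pi(\bG_{(r)})_M = \emptyset$, extended to arbitrary modules), each $M_{|\bG_{(r)}}$ is injective precisely when its $\pi$-point support is empty; taking the colimit over $r$ yields the equivalence $M$ mock injective $\Leftrightarrow$ $\Pi(\bG)_M = \emptyset$. For (2), restriction commutes with filtered colimits, so $(\varinjlim_i M_i)_{|\bG_{(r)}} = \varinjlim_i (M_i)_{|\bG_{(r)}}$, and the latter is injective because a filtered colimit of injectives over a finite group scheme is injective (finite-dimensional Hopf algebras are Noetherian and, in fact, self-injective). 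For (3), applying the exact functor $(-)_{|\bG_{(r)}}$ produces a short exact sequence of $\bG_{(r)}$-modules with two injective terms, and the two-out-of-three property for injectives over finite group schemes (which follows from splitting of short exact sequences with an injective end term) gives injectivity of the third.

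For (4), given a closed embedding $\bH \hookrightarrow \bG$ of linear algebraic groups, the induced map on Frobenius kernels $\bH_{(r)} \hookrightarrow \bG_{(r)}$ is again a closed embedding of finite group schemes, and by a theorem of Oberst--Schneider (or the equivalent assertion that $\cO(\bG_{(r)})$ is a free $\cO(\bH_{(r)})$-module), the restriction of an injective $\bG_{(r)}$-module to $\bH_{(r)}$ is again injective; since $(M_{|\bH})_{|\bH_{(r)}} = (M_{|\bG_{(r)}})_{|\bH_{(r)}}$, mock injectivity of $M$ as a $\bG$-module passes to mock injectivity as an $\bH$-module. For (5), observe $(M \otimes N)_{|\bG_{(r)}} = M_{|\bG_{(r)}} \otimes N_{|\bG_{(r)}}$, and for the finite group scheme $\bG_{(r)}$ (whose distribution algebra is Frobenius, in particular self-injective) tensoring an injective module with an arbitrary module yields an injective module; this is the standard ``tensor identity'' for finite dimensional Hopf algebras.

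The main obstacle, if any, is packaging the references for (1): the paper defines $\Pi(\bG)$ via a colimit of the $\Pi(\bG_{(r)})$'s, so one must check carefully that ``emptiness of $\Pi(\bG)_M$'' is equivalent to ``emptiness of each $\Pi(\bG_{(r)})_{M_{|\bG_{(r)}}}$,'' which is immediate from the construction but worth stating explicitly. The final sentence about failure for ``injective'' can be illustrated by any proper mock injective $\bG$-module (e.g.\ $J_d$ of Example~\ref{ex:Ga-mock}), which shows the listed properties do not characterize injective modules.
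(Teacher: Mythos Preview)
Your proof of (1)--(5) is correct and follows exactly the approach the paper takes: the paper gives no detailed argument, saying only that these properties ``follow easily from the exactness of $(-)_{|\bG_{(r)}}: Mod(\bG) \to Mod(\bG_{(r)})$ and the corresponding properties for support properties for $\bG_{(r)}$-modules,'' with a pointer to \cite[Prop 4.6]{F18}. Your write-up is in fact more detailed than the paper's.

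One small correction concerns your treatment of the sentence ``None of the above properties is valid for all linear algebraic groups if `mock injective' is replaced by `injective.'\,'' You interpret this as saying the properties fail to \emph{characterize} injectivity, and you justify it only via (1). The paper's assertion is stronger: \emph{each} of (1)--(4) becomes false for some $\bG$ when ``mock injective'' is replaced by ``injective.'' Property (1) fails by the existence of proper mock injectives (as you note); property (4) fails by Example~\ref{ex:first-mock}, since $(k[\bG]^R)_{|\bH}$ is not injective when $\bG/\bH$ is not affine; property (3) fails because $Mod(\bG)$ is not Frobenius in general, so a kernel of a surjection between injectives need not be injective; and property (2) fails because $Mod(\bG)$ need not be locally Noetherian. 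This is an auxiliary remark and does not affect the correctness of your argument for the numbered assertions.
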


\vskip .1in

We continue to find the class of mock injective $\bG$-modules mysterious.  The dichotomy
between the classes of injective $\bG$-modules and mock injective $\bG$-modules 
is emphasized by the following proposition.

\begin{prop}
\label{prop:dichotomy}
Let $\bG$ be a linear algebraic group with closed embedding $\phi: \bG \hookrightarrow GL_N$.
\begin{enumerate}
\item
$M$ is mock injective if and only if the support variety
$\Pi(\bG_{(r)})_{M_{|\bG_{(r)}}}$ is empty for all $r > 0$.
\item
$M$ is injective if and only if the $\bG$-module $M_{\leq d,\phi}$ 
is an injective $\cO(\bG)_{\leq d,\phi}$ comodule for every $d \geq 0$.
\end{enumerate}
\end{prop}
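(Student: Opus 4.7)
The plan is to handle the two assertions separately, as each reduces to a result already in hand.

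For assertion (1), the proof is essentially unpacking the definition of mock injective and invoking a standard characterization from the support theory of finite group schemes. By definition, $M$ is mock injective precisely when $M_{|\bG_{(r)}}$ is injective as a $\bG_{(r)}$-module for every $r > 0$. For a finite group scheme such as $\bG_{(r)}$, injectivity of a module is equivalent to the vanishing of its $\pi$-point support variety, which is the construction of \cite{FP2} that underlies the definition of $\Pi(\bG_{(r)})_{(-)}$. Hence $M$ is mock injective if and only if $\Pi(\bG_{(r)})_{M_{|\bG_{(r)}}} = \emptyset$ for all $r > 0$. I would write this as a one-paragraph citation-style argument, referencing \cite{FP2} and the definition of mock injective from \cite{F18}.

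For assertion (2), the plan is to apply Proposition \ref{prop:X-injectivity} directly to the ascending, converging sequence of subspaces $X_i := \cO(\bG)_{\leq i,\phi}$ of $\cO(\bG)$. That this is indeed an ascending, converging sequence follows from Definition \ref{defn:ascending}, and each $X_i$ is a finite dimensional subcoalgebra of $\cO(\bG)$ by Lemma \ref{lem:surj-coalg} and Proposition \ref{prop:finite}. By Proposition \ref{prop:advantages}(1), the abelian subcategory $Mod(\bG, \cO(\bG)_{\leq d,\phi})$ of $Mod(\bG)$ is identified with $CoMod(\cO(\bG)_{\leq d,\phi})$, and under this identification the functor $(-)_{\cO(\bG)_{\leq d,\phi}}$ coincides with $(-)_{\leq d,\phi}$ of Definition \ref{defn:ascending-M}. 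Proposition \ref{prop:X-injectivity} therefore asserts that $M$ is injective as a $\bG$-module if and only if $M_{\leq d,\phi}$ is injective in $CoMod(\cO(\bG)_{\leq d,\phi})$ for every $d \geq 0$, which is exactly the desired equivalence.

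In neither part do I anticipate a serious obstacle: both statements are consequences of results established earlier in the paper (together with the well-known $\pi$-point characterization of injectivity for finite group schemes). The only point worth being slightly careful about is ensuring in (2) that the identification of categories in Proposition \ref{prop:advantages}(1) transports $(-)_X$ to $(-)_{\leq d,\phi}$; this is immediate from the respective definitions but is worth explicitly stating to justify the invocation of Proposition \ref{prop:X-injectivity} in the coalgebra setting.
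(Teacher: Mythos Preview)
Your proposal is correct and follows essentially the same approach as the paper: assertion (1) reduces to the detection-of-injectivity property for support varieties of Frobenius kernels (the paper cites \cite{SFB2} and \cite{Pevt} rather than \cite{FP2}, but the content is the same), and assertion (2) is a direct application of Proposition \ref{prop:X-injectivity} to the sequence $\{\cO(\bG)_{\leq d,\phi}\}$. Your added care in invoking Proposition \ref{prop:advantages}(1) to identify $Mod(\bG,\cO(\bG)_{\leq d,\phi})$ with $CoMod(\cO(\bG)_{\leq d,\phi})$ is a helpful clarification that the paper leaves implicit.
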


\begin{proof}
Assertion (1) follows from the detection of injectivity property for support varieties for infinitesimal group
schemes (\cite{SFB2}, \cite{Pevt}) now expressed in the notation/terminology of \cite{F23}.

Assertion (2) is given by Proposition \ref{prop:X-injectivity}.
\end{proof}

\vskip .1in

If a mock injective $\bG$-module is not injective (as a $\bG$-module), then it is called
a proper mock injective $\bG$-module. A $\bG$-module $L$ is an injective 
if and only if every short exact sequence $0 \to L \to M \to N \to 0$
of $\bG$-modules splits.  If $J$ is a proper mock injective $\bG$-module $J$ with a embedding
$J \hookrightarrow I$ of $J$ into an injective $\bG$-module $I$, then the short exact sequence
$0 \to J \to I \to I/J \to 0$ does not split.

\begin{remark}
\label{rem:parshall}
Mock injective modules are necessarily infinite dimensional.  They contrast greatly to ``Parshall's Conjecture"
proved by Lin and Nakano \cite[Cor 3.5]{L-N} which states that for a finite dimensional $\bG$-module $M$
that if $M$ is injective over $\bG_{(1)}$ then it must be projective over $\bG(\bF_p)$.  
See Example \ref{ex:Ga-mock} below and extended to the analogue for the restriction to 
$\bG_{(r)}$ versus restriction to $\bG_{\bF_{p^r}}$ in \cite[Thm 4.5, Prop 5.1]{F11}.
\end{remark}

\vskip .1in

We recall the first examples of proper mock injective 
$\bG$-modules, an interpretation of
results of Cline, Parshall, and Scott concerning induced modules. (See \cite{CPS77}.)

\begin{ex} \cite[Prop 4.54]{F18}
\label{ex:first-mock}
Let $\bG$ be a linear algebraic group and $j: \bH \hookrightarrow \bG$ a closed embedding
of the linear algebraic group $\bH$.
Then the restriction $(k[\bG]^R)_{|\bH}$ to $\bH$ of the right regular representation of $\bG$ is
a mock injective $\bH$-module.  On the other hand, $(k[\bG]^R)_{|\bH}$ is an injective $\bH$-module
if and only if $\bG/\bH$ is an affine variety.

In particular, if $\bG$ is a reductive algebraic group and $\bH$ is not reductive, then
$(k[\bG]^R)_{|\bH}$ is a proper mock injective $\bH$-module.
\end{ex}

\vskip .1in 

\begin{remark}
\label{rem:useful}
The reader may gain some intuition, as we have, by comparing the fixed points of $k[\bG_a]^R$
under the actions of the finite subgroup schemes $\bG_a(\bF_p)$ and $\bG_{a(1)}$ of $\bG_a$.  
We identify these actions using
$$ (k[\bG_a(\bF_q)])^* \otimes k[\bG_a(\bF_q)]^R \ \to \ k[\bG_a(\bF_q)]^R, \quad
\alpha \otimes t^n  \ \mapsto \ \sum_{i\leq n} {n \choose i} \alpha^{n-i} t^i$$
$$(k[\bG_{a(r)}])^* \otimes k[\bG_a(\bF_q)]^R \ \to \ k[\bG_a(\bF_q)]^R, \quad
\hat{t^i} \otimes t^n \ \mapsto \  \sum_{i\leq n}  {n \choose i} t^{n-i};$$
here, $\alpha$ denotes an element of $\bG_a(\bF_q)$, a generator of $(k[\bG_a(\bF_q)])^*$.
The fixed point space $k[t]^{\bG(\bF_p)}$ consists of those $f(t)$ which are polynomials
in $t^p$-$t$ (see Proposition \ref{prop:ind}); this is a proper mock injective module by
Theorem \ref{thm:HNS}.  On the other hand, the fixed point 
space $k[t]^{\bG_{a(1)}}$ consists of those $f(t)$ which are polynomials in $t^p$; the restriction
to $\bG_{a(1)}$ of this fixed point space has trivial $\bG_{a(1)}$-action and thus is not 
an injective $\bG_{a(1)}$-module.

See Example \ref{ex:Ga-mock} for an investigation of the role of $\bG_a(\bF_q)$ fixed points
in constructing mock injective $\bG_a$-modules.
\end{remark}

\vskip .1in

We provide a somewhat surprising property of mock injective $\bG$-modules.

\begin{thm}
\label{thm:mock-quotients}
Consider a linear algebraic group $\bG$, a finite dimensional subspace $X \subset \cO(\bG)$,
and a mock injective $\bG$-module $J$.  Then 
$$Hom_\bG(J,M_X) \quad = \quad 0$$
for every $\bG$-module $M$ provided that either
\vskip .05in

(i.)  $\bG$ is unipotent,  \quad or 
\vskip .05in

(ii.)  $\bG$ is semi-simple and simply connected and $p \geq 2h-2$, where $h$ is
the Coxeter number of $\bG$. 

\end{thm}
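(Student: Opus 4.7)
The plan is to reduce the assertion to the simple-target case, $\Hom_\bG(J,L)=0$ for every simple $\bG$-module $L$, and then establish that simple-target vanishing separately in each of (i) and (ii).

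\emph{Step 1: reduction to simple targets.}  Let $\phi\colon J\to M_X$ be a $\bG$-module homomorphism and set $N=\phi(J)\subseteq M_X$. Because $X$ is finite dimensional, Proposition~\ref{prop:finite} provides a finite dimensional subcoalgebra $C=\cO(\bG)_{\langle X\rangle}$ containing $X$; then $M_X$, and hence $N$, is a comodule for $C$, equivalently a module for the finite-dimensional dual algebra $C^*$. If $\phi\neq 0$ then $N\neq 0$, and because $\Rad(C^*)$ is nilpotent a Nakayama-type argument shows that $N/(\Rad(C^*)\cdot N)$ is a non-zero semisimple $C^*$-module; projecting onto any simple summand and precomposing with $\phi$ yields a non-zero $\bG$-module map $J\to L$ for some simple $\bG$-module $L$ whose matrix coefficients lie in $C$. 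Hence it suffices to show $\Hom_\bG(J,L)=0$ for every simple $\bG$-module $L$.

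\emph{Step 2: case (i).}  For $\bG$ unipotent the only simple $\bG$-module is $k$, so the task is to show $\Hom_\bG(J,k)=0$, equivalently that the coinvariants $J_\bG$ vanish. A $k$-linear map $\phi\colon J\to k$ is $\bG$-equivariant exactly when, writing $\Delta_J(j)=j\otimes 1+\sum_\alpha j_\alpha\otimes f_\alpha$ with $\{1\}\cup\{f_\alpha\}$ a basis of $\cO(\bG)$, one has $\phi(j_\alpha)=0$ for every $\alpha$ and every $j$. For $\bG=\bG_a$ and $J=k[\bG_a]^R=k[t]$ this specializes to $\binom{n}{j}\phi(t^{n-j})=0$ for all $n\geq j\geq 1$; Lucas's theorem — choose $j=p^s$ with $p^s>m$ so that $\binom{m+j}{j}\equiv 1\pmod{p}$ — then forces $\phi(t^m)=0$ for every $m$. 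For general connected unipotent $\bG$, $\cO(\bG)$ is a polynomial algebra over $k$ and the same multinomial Lucas-type identity applies to the relevant coaction coefficients. Mock injectivity of $J$ enters via freeness of $J|_{\bG_{(r)}}$ as a $k\bG_{(r)}$-module for each $r$: every element of $J$ lies in a free $\bG_{(r)}$-summand generated by an element whose coaction is ``generic enough'' that the Lucas-type identities force the vanishing of $\phi$ on it, and letting $r\to\infty$ exhausts $J$.

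\emph{Step 3: case (ii).}  For $\bG$ semisimple simply connected with $p\geq 2h-2$, the simple $\bG$-modules appearing as $C$-comodules form a finite list of modules $L(\lambda)$; using Steinberg's tensor product theorem, write $L(\lambda)=L(\lambda_0)\otimes L(\lambda_1)^{(1)}\otimes\cdots$. The short exact sequence $1\to\bG_{(r)}\to\bG\to\bG^{(r)}\to 1$ produces a spectral sequence
\[
H^p\bigl(\bG^{(r)},\Ext^q_{\bG_{(r)}}(J,L(\lambda))\bigr)\;\Longrightarrow\;\Ext^{p+q}_\bG(J,L(\lambda)),
\]
which collapses because $\Ext^{>0}_{\bG_{(r)}}(J,-)=0$ by mock injectivity of $J$, leaving the identification $\Hom_\bG(J,L(\lambda))=\Hom_{\bG_{(r)}}(J,L(\lambda))^{\bG^{(r)}}$. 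The bound $p\geq 2h-2$ is then invoked — through the linkage principle and Steinberg-type properties of injective $\bG_{(r)}$-modules over a reductive $\bG$ — to show that these $\bG^{(r)}$-invariants vanish for every sufficiently large $r$.

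\emph{Main obstacle.}  The central difficulty, common to both cases, is that freeness of $J|_{\bG_{(r)}}$ as a $k\bG_{(r)}$-module does not by itself force any vanishing of $\Hom_{\bG_{(r)}}(J,L)$; typically this space is large. The vanishing of $\Hom_\bG(J,L)$ must instead be extracted from the fact that $\bG$-equivariance is a substantially stronger condition than $\bG_{(r)}$-equivariance for any single $r$, together with the rationality of the $\bG$-coaction encoded in the polynomial structure of $\cO(\bG)$. In case (ii) the additional technical point is to pinpoint exactly how the hypothesis $p\geq 2h-2$ interacts with the linkage principle to force the $\bG^{(r)}$-invariants above to vanish; this is where I expect the main work to lie.
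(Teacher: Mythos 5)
Your proposal is correct in outline at the places you actually carry out an argument, but it leaves a genuine gap precisely at the step you yourself flag as the ``main obstacle,'' and that gap is the entire content of the theorem.  Let me be concrete about how the paper's argument handles it and where your plan diverges.

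\textbf{The paper's argument is not a reduction to simple targets.}  Your Step~1 is valid: if a nonzero $\phi\colon J\to M_X$ exists, then its image is a nonzero $C$-comodule for $C=\cO(\bG)_{\langle X\rangle}$, and the Nakayama argument over the finite-dimensional algebra $C^*$ produces a nonzero map $J\to L$ onto a simple $\bG$-module.  But this observation does not lighten the load: the paper never performs this reduction and instead works directly with $M=M_{\leq d,\phi}$.  The actual mechanism in the paper is an element-chasing contradiction built from two competing facts about a hypothetical element $j\in J$ with $\rho(j)=m\neq 0$: (a) mock injectivity makes $J|_{\bU_{(r)}}$ free, so $j$ lives in a rank-one free $\bU_{(r)}$-summand $J_j\cong\cO(\bU_{(r)})$ whose generator $g_r$ has coaction that ``fills up'' $\cO(\bU_{(r)})$; and (b) $M=M_{\leq d,\phi}$ confines the $\bU_{(r)}$-coaction of every element of $M$ to the image $X$ of $\cO(\bU)_{\leq d,\phi}$ inside $\cO(\bU_{(r)})$, which for $r\gg 0$ is a \emph{proper} subcoalgebra.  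One deduces $\rho(g_r)=0$ from (b), and $\rho(g_r)\neq 0$ from the fact that $j=\theta\cdot g_r$ for some $\theta\in k\bU_{(r)}$, forcing $m=\rho(j)=\theta\cdot\rho(g_r)$.  The point is that $r$ is allowed to grow, so the ``size'' of the generator's coaction eventually outstrips the finite-dimensional confinement imposed by $X$.  Your Step~2 gropes toward this (``generated by an element whose coaction is generic enough''), but you never state or prove the quantitative fact that the generator of a free rank-one summand of $J|_{\bU_{(r)}}$ has coaction that cannot live inside $J_j\otimes X$ once $X\subsetneq\cO(\bU_{(r)})$, nor do you exhibit how to propagate this back to a contradiction with $\rho(j)\neq 0$.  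The Lucas computation you give for $J=k[t]$ is a special case where $J$ is \emph{injective}, so it illustrates the flavor but says nothing about general proper mock injectives.

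\textbf{Case (ii) is routed differently, and not clearly workably so.}  Your Lyndon--Hochschild--Serre argument gives the (easy, and true for any $J$) identity $\Hom_\bG(J,L)=\Hom_{\bG_{(r)}}(J,L)^{\bG/\bG_{(r)}}$; mock injectivity only kills the higher $\Ext$ terms and contributes nothing in degree zero, so this is just the statement that $\bG$-equivariance is stronger than $\bG_{(r)}$-equivariance.  You then ``invoke'' $p\geq 2h-2$ through the linkage principle, but you offer no mechanism by which linkage forces these fixed points to vanish, and it is unclear that it does.  The paper uses $p\geq 2h-2$ for something quite different: under this bound, injective hulls of simple $\bG_{(r)}$-modules lift to $\bG$-module structures (the Donkin--Jantzen liftability statement, \cite[II.11.17]{J}), and this liftability lets the paper run the same ``large cyclic summand vs.~finite-dimensional confinement'' contradiction from case (i), with free rank-one summands replaced by cyclic summands whose dimensions grow with $r$.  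If you want to pursue your spectral-sequence route, the step you would need to supply is a proof that for $r$ sufficiently large no nonzero $\bG_{(r)}$-map $J\to L$ can be $\bG/\bG_{(r)}$-invariant; at present you acknowledge you do not have it, and it is not obvious how linkage alone supplies it.

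\textbf{Summary.}  The reduction to simple targets is a correct but unused simplification; the Lucas computation is a correct illustration in a degenerate case; the spectral-sequence identity is correct but vacuous for $\Hom$.  The theorem's content lives entirely in the step you labeled ``Main obstacle,'' and the paper fills it with a direct size-comparison argument between the full-spread coaction of a free-summand generator (available because $J$ is mock injective) and the finite confinement of the coaction of $M_X$ (available because $X$ is finite dimensional), together with Donkin-type liftability in case (ii).  To complete your proof you would need either to reproduce that size-comparison argument or to find a genuinely different way to extract vanishing, which you have not done.
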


\begin{proof}
First, consider a unipotent linear algebraic group $\bU$ with
closed embedding $\phi: \bU \hookrightarrow GL_N$ and a mock injective $\bU$-module
$J$.  Assume the existence of a map $\rho: J \to M$ sending $j\in J$ to $m \not= 0 \in M$.
For all $r > 0$, $J_{|\bU_{(r)}}$ is a free $\bU_{(r)}$-module so that $j$ lies in 
some free, rank 1 $\bU_{(r)}$-summand $(J_{|\bU_{(r)}})_j$ of $J_{|\bU_{(r)}}$.  Namely,
we can take the cyclic $\bU_{(r)}$ submodule $k\bU_{(r)}\cdot j \subset J_{|\bU_{(r)}}$,
and extend this inclusion to the injective hull of $k\bU_{(r)}\cdot j$ which is a free, rank 1
$\bU_{(r)}$-summand of $J_{|\bU_{(r)}}$.    For notational simplicity, let $J_j$ denote 
$(J_{|\bU_{(r)}})_j$.

Assume that $M = M_{\leq d,\phi}$
and choose $r$ sufficiently large that the composition $\cO(\bU)_{\leq d,\phi} \subset \cO(\bU)
\twoheadrightarrow \cO(\bU_{(r-1})$ is injective.  Choose an identification  $\cO(\bU_{r)}) \ \simeq  
 J_j$ of $\bU_{(r)}$-modules and choose $\psi \in k\bU_{(r)} = (\cO(\bU_{r)}))^*$
such that $\psi$ vanishes on 
the image $X$ of $\cO(\bU)_{\leq d,\phi} \subset \cO(\bU) \twoheadrightarrow \cO(\bU_{(r)})$
but does not vanish on a generator $g_r$ of $J_j$.  Observe that 
$\Delta_{J_j}(g_r) \notin J_j\otimes X$.  This implies that $\rho(g_r) \notin M_{\leq d,\phi}$
and thus must equal 0.
Since $g_r$ is a generator of $J_j$, there exists some $\theta  \in k\bU_{(r)}$ sending $g_r$ to $j$,
so that $\theta(\rho(g_r)) = \rho(\theta(g_r)) = \rho(j) = m$.  This contradicts our
assumption that $0 \not= m \in M$.

Assume now that $\bG$ is semi-simple and simple connected and that $M = M_X$ 
for some finite dimensional subspace $X \subset \cO(\bG)$. Once again we proceed by 
contradiction, assuming the exists a map $\rho: J \to M$ sending $j\in J$ to $ m \not= 0 \in M$.
Let $\{ S_\lambda \}$
be the set of isomorphism classes of irreducible $\bG$-modules, each of finite 
dimension $d(\lambda)$, and assume that the injective hull of $S_\lambda$ restricted to $\bG_{(r)}$
for $r$ sufficiently large admits a $\bG$-structure.  This implies that the injective hull of
$S_\lambda$ as a $\bG$-module is the colimit the injective hulls as $\bG_{(r)}$-modules
(see \cite[II.11.17]{J}).  Then $k[\bG]$ is isomorphic as a $\bG$-module to 
a direct sum of $I_\lambda$'s, with $I_\lambda$ occurring finitely often depending on $d_\lambda$.
Each $I_\lambda$ is infinite dimensional (see \cite[II.11.4]{J}).  

Since $J$ embeds into 
$J^{tr} \otimes k[\bG]^R$ as a $\bG$-module and since this embedding splits when restricted
to each $\bG_{(r)}$, we conclude that $J_{|\bG_{(r)}}$ for each $r > 0$ is a direct sum of
cyclic modules, the images of generators of copies of $\cO(\bG_{r)})$
constituting $(J^{tr} \otimes k[\bG]^R)_{\bG_{(r)}}$.   Moreover, as $r$ increases, the
dimension of these cyclic summands of $J_{|\bG_{(r)}}$ also increases.  Thus, if the restriction
 to $J_{|\bG_{(r)}}$ of $g_r \in J$ is the image of a generator of a copy of $\cO(\bG_{r)})$ 
 in $\cO(\bG)_{|\bG_{(r)}}$, then $\Delta_J(g_r) \in  J\otimes \cO(\bG)$ 
 restricts to a sum of terms in $J_{\bG_{(r)}} \otimes \cO(\bG_{(r)})$ such that the number of
 terms of this sum grows as $r$ increases.

We can thus repeat the argument given for $\bU$, now replacing
``some free $\bU_{(r)}$-summand $(J_{|\bU_{(r)}})_j$ $ \ \ldots \  \ldots g_r \in (J_{|\bU_{(r)}})_j$"
by a cyclic summand of  $(J_{|\bU_{(r)}})_j$ of $J_{|\bG_{(r)}}$ containing the chosen element $j$
assumed to map to $m \not= 0 \in M$ and also containing some $g_r$ which is the image of
a generator of a copy of $\cO(\bG_{(r)})$  in $\cO(\bG)_{|\bG_{(r)}}$.  Namely, these conditions
enable us to chose $\psi \in k\bG_{(r)} = (\cO(\bG_{r)}))^*$
such that $\psi$ vanishes on 
the image $X$ of $\cO(\bG)_{\leq d,\phi} \subset \cO(\bG) \twoheadrightarrow \cO(\bG_{(r)})$
but does not vanish on $g_r$ of $J_j \subset J$.  Once again, we obtain a contradiction 
because $j \in J$ when restricted to $J_j$ is the image under some $\theta \in k\bG_{(r)}$ of $g_r$.
\end{proof}

\vskip .1in

We summarize some useful properties of the functor $ind^{\bG}_H(-)$ following
\cite{CPS77}

\begin{prop}
\label{prop:ind-H}
Let $\bG$ be a linear algebraic group and $H \subset \bG$ a closed subgroup scheme with the property
that $\bG/H$ is affine.
\begin{enumerate}
\item
The induction functor \ $ind^{\bG}_H(-): Mod(H) \ \to \ Mod(\bG)$ \
is an exact functor between abelian categories 
\item
If $M$ is an injective $H$-module, then $ind_H^\bG(M)$ is an injective $\bG$-module.
\item
There is a natural identification $H^i(\bG,ind_H^\bG(M)) \simeq H^i(H,M)$ for any
$H$-module $M$, any $i \geq 0$.
\item
If $M$ is an $H$-module such that $H^n(H,k) \not=0$ for some $n > 0$, then
$Ind_H^\bG(M)$ is not an injective $\bG$-module.
\item
If $M$ is a finite dimensional $H$-module, then $ind_H^\bG(M)$ is a cofinite $\bG$-module.
\end{enumerate}
\end{prop}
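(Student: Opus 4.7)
The plan is to dispatch parts (1)--(4) by reduction to classical results of Cline--Parshall--Scott \cite{CPS77} and standard homological algebra, and then to prove (5) using the subcoalgebra finiteness machinery of Section \ref{sec:coalgebras}. For (1), $ind_H^\bG(-)$ is always right adjoint to the restriction functor $(-)_{|H}$, hence left exact; the hypothesis that $\bG/H$ is affine is precisely the condition under which \cite{CPS77} upgrades this to exactness. For (2), restriction $Mod(\bG)\to Mod(H)$ is (trivially) exact, so its right adjoint $ind_H^\bG(-)$ preserves injectives by a standard categorical argument.

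For (3), combining (1) and (2), any injective resolution $M \to I^\bullet$ in $Mod(H)$ yields an injective resolution $ind_H^\bG M \to ind_H^\bG I^\bullet$ in $Mod(\bG)$. Computing $\HHH^i(\bG,-) = \Ext^i_\bG(k,-)$ via this resolution and using Frobenius reciprocity $\Hom_\bG(k, ind_H^\bG J) \simeq \Hom_H(k, J)$ (since the restriction of the trivial $\bG$-module $k$ is the trivial $H$-module) gives the asserted natural isomorphism. Part (4) follows immediately (modulo what appears to be a minor misprint in the stated hypothesis): taking $M = k$, or more generally any $M$ with $\HHH^n(H,M) \neq 0$, in (3) produces $\HHH^n(\bG, ind_H^\bG M) \neq 0$, which precludes injectivity of $ind_H^\bG M$.

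For (5), the key observation is that $ind_H^\bG M$ embeds as a $\bG$-submodule of $M^{tr} \otimes k[\bG]^R$ (the $H$-equivariant ``functions'' under the diagonal $H$-action), where $M^{tr}$ denotes $M$ equipped with the \emph{trivial} $\bG$-coaction, so that the $\bG$-action comes entirely from the right regular representation on $k[\bG]^R$. Since $M$ is finite dimensional, a choice of basis of $M$ yields a (non-canonical) isomorphism of $\bG$-modules $M^{tr} \otimes k[\bG]^R \simeq (k[\bG]^R)^{\oplus \dim M}$. Hence for any subspace $X \subset \cO(\bG)$,
\[
(ind_H^\bG M)_X \ \subset \ (M^{tr} \otimes k[\bG]^R)_X \ = \ M \otimes (k[\bG]^R)_X.
\]
Since $X \subset \cO(\bG)_{\langle X\rangle}$, every $X$-subcomodule of $k[\bG]^R$ is \emph{a fortiori} a $\cO(\bG)_{\langle X\rangle}$-subcomodule, so Proposition \ref{prop:advantages}(3) gives
\[
(k[\bG]^R)_X \ \subset \ (k[\bG]^R)_{\cO(\bG)_{\langle X\rangle}} \ = \ \cO(\bG)_{\langle X\rangle}.
\]
When $X$ is finite dimensional, so is $\cO(\bG)_{\langle X\rangle}$ by Proposition \ref{prop:finite}, and consequently $(ind_H^\bG M)_X \subset M \otimes \cO(\bG)_{\langle X\rangle}$ is finite dimensional, establishing cofiniteness.

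The main (mild) obstacle is careful bookkeeping: verifying that the coaction on $ind_H^\bG M$ is indeed the restriction of $id_M \otimes \Delta_{k[\bG]^R}$, and that the triviality of the $\bG$-coaction on $M^{tr}$ promotes the generic inclusion of Proposition \ref{prop:prod-coalg}(1) to the equality $(M^{tr} \otimes k[\bG]^R)_X = M \otimes (k[\bG]^R)_X$ (which one can verify using $k$-linear functionals $\varepsilon_i: M \to k$ dual to a basis, each inducing a $\bG$-equivariant projection $M^{tr} \otimes k[\bG]^R \twoheadrightarrow k[\bG]^R$ that sends $X$-subcomodules to $X$-subcomodules). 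Once these identifications are in place, the proof of (5) is a compact assembly of the coalgebra-finiteness results of Section \ref{sec:coalgebras}.
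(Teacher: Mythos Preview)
Your proposal is correct and follows essentially the same route as the paper: parts (1)--(4) are handled identically (the paper phrases (3) via the Grothendieck spectral sequence for $H^0(\bG,-)\circ ind_H^\bG(-)$, which collapses by exactness to your direct injective-resolution argument), and for (5) the paper likewise embeds $ind_H^\bG(M)$ into $M^{tr}\otimes k[\bG]$ and invokes cofiniteness of the latter. The only cosmetic differences are that the paper uses $k[\bG]^L$ rather than $k[\bG]^R$ (harmless, as they are isomorphic via the antipode) and concludes (5) by citing Proposition~\ref{prop:cofinite}(1),(5) rather than unpacking the subcoalgebra machinery as you do.
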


\begin{proof}
The exactness of $ind^{\bG}_H(-)$ is proved in \cite{CPS77}.  
Assertion (2) follows from the 
fact that $ind^{\bG}_H(-)$ has an exact left adjoint (namely, the restriction functor) (see the
proof of Corollary \ref{cor:enough}).  The equivalence $H^0(\bG,ind_H^\bG(M)) \simeq H^0(H,M)$
is given by the universal property for $Hom_\bG(-,ind_H^\bG(M))$ applied to the $\bG$-module $k$.
Assertion (3) follows by applying the Grothendieck spectral sequence for the composite functor
$H^0(\bG,-) \circ ind_H^\bG(-)$ together with this equivalence and the exactness of $ind_H^\bG(-)$.
Assertion (3) immediately implies assertion (4).

Finally, if $M$ is finite dimensional, then we may realize $ind_H^\bG(M)$ as the fixed point space
under the right action of $H$ on $M^{tr} \otimes k[\bG]_L$ and thus a $\bG$-submodule of $M^{tr} \otimes k[\bG]_L$.
Hence, assertion (5) follows from the fact that $M^{tr} \otimes k[\bG]_L$ is a cofinite $\bG$-module.
\end{proof}

\vskip .1in

We supplement Proposition \ref{prop:ind-H} with the following proposition concerning the
composition $ind_H^\bG(-) \circ (-)_H$.

\begin{prop}
\label{prop:ind-restrict}
Let $\bG$ be a linear algebraic group and $H \subset \bG$ a closed subgroup scheme with the property
that $\bG/H$ is affine.  Then the exact functor
$$ind_H^\bG(-) \circ (-)_H: Mod(\bG) \quad \to \quad Mod(\bG)$$
is faithful.

Moreover, if $M, \ N$ are $\bG$-modules and if $\psi: ind_H^\bG(M_{|H}) \to ind_H^\bG(N_{|H})$ is
an isomorphism of $\bG$-modules, then $\psi$ induces an isomorphism of $H$-modules
$M_{|H} \stackrel{\sim}{\to} N_{|H}$.
\end{prop}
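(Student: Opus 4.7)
The proposition has two assertions, and I would address them in sequence.

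For the faithfulness of $ind_H^\bG(-) \circ (-)_{|H}$: my plan is to show that each factor is both exact and faithful, since the composition of exact faithful functors between abelian categories is again exact and faithful. Restriction $(-)_{|H}$ is manifestly exact and faithful, as it acts as the identity on underlying vector spaces and morphisms. The functor $ind_H^\bG$ is exact by Proposition~\ref{prop:ind-H}(1), which uses the hypothesis that $\bG/H$ is affine. It remains to verify that $ind_H^\bG$ reflects the zero object, which for an exact additive functor between abelian categories is equivalent to faithfulness. For this I would invoke the geometric description of $ind_H^\bG(L)$ as the space of global sections of the associated quasi-coherent sheaf $\bG \times^H L$ on the affine scheme $\bG/H$; since the global-sections functor on an affine scheme is exact and faithful, and the fiber of this sheaf is $L$, we have $L \neq 0 \Rightarrow ind_H^\bG(L) \neq 0$.

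For the induced isomorphism, the natural construction uses the unit $\eta_M: M \to ind_H^\bG(M_{|H})$ (an injective $\bG$-equivariant map given by $v \mapsto (g \mapsto g \cdot v)$) and the counit $\epsilon_N: ind_H^\bG(N_{|H})_{|H} \to N_{|H}$ (evaluation at the identity) of the Frobenius reciprocity adjunction $(-)_{|H} \dashv ind_H^\bG$. Define
\[
\bar\psi \;:=\; \epsilon_N \circ \psi_{|H} \circ (\eta_M)_{|H}\;:\; M_{|H} \longrightarrow N_{|H}.
\]
This is an $H$-map, and by Frobenius reciprocity it is the unique $H$-map corresponding to the $\bG$-map $\psi \circ \eta_M: M \to ind_H^\bG(N_{|H})$ under the bijection $\mathrm{Hom}_\bG(M, ind_H^\bG(N_{|H})) \xrightarrow{\sim} \mathrm{Hom}_H(M_{|H}, N_{|H})$. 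Combining the explicit formula $\eta_M(v)(g) = g \cdot v$ with the $\bG$-equivariance of $\psi$ yields the clean identity
\[
\psi(\eta_M(v))(g) \;=\; \bar\psi(g \cdot v) \qquad \text{for all } g \in \bG,\ v \in M,
\]
which encodes the restriction $\psi|_{\eta_M(M)}$ entirely through $\bar\psi$. The analogous construction applied to $\psi^{-1}$ yields an $H$-map $\overline{\psi^{-1}}: N_{|H} \to M_{|H}$.

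The main obstacle is the verification that $\bar\psi$ and $\overline{\psi^{-1}}$ are mutually inverse. A naive attempt to check $\overline{\psi^{-1}} \circ \bar\psi = id_{M_{|H}}$ by direct calculation compares $\eta_M(\overline{\psi^{-1}}(\bar\psi(v)))$ with $\psi^{-1}(\psi(\eta_M(v))) = \eta_M(v)$ inside $ind_H^\bG(M_{|H})$, and these elements agree as functions $\bG \to M_{|H}$ precisely when the induced maps are $\bG$-equivariant rather than merely $H$-equivariant, which is not automatic. I expect to resolve this by exploiting the bijectivity of Frobenius reciprocity twice, once for $\psi$ and once for $\psi^{-1}$: the endomorphism $\overline{\psi^{-1}} \circ \bar\psi$ of $M_{|H}$ corresponds under the Frobenius bijection $\mathrm{Hom}_\bG(M, ind_H^\bG(M_{|H})) \cong \mathrm{Hom}_H(M_{|H}, M_{|H})$ to the $\bG$-map $ind_H^\bG(\overline{\psi^{-1}}) \circ \psi \circ \eta_M$, and using the adjunction triangle identities together with the relation $\psi \circ \eta_M = ind_H^\bG(\bar\psi) \circ \eta_M$ one sees this equals $\eta_M$ itself. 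Uniqueness in Frobenius then forces $\overline{\psi^{-1}} \circ \bar\psi = id_{M_{|H}}$, and the symmetric argument completes the proof that $\bar\psi$ is the desired isomorphism.
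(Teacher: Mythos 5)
For the faithfulness assertion your route differs from the paper's but is perfectly sound.  The paper works purely with the adjunction, using the identity $ev_M \circ ind_H^\bG(f)\circ i_N = f$ (the counit--unit triangle together with naturality) to see that $ind_H^\bG(f)=0$ forces $f=0$.  You instead invoke the geometric description of $ind_H^\bG$ as global sections of the associated sheaf on the affine scheme $\bG/H$.  Both are valid; yours requires the sheaf-theoretic picture, the paper's requires only the adjunction.  Either way the composite $ind_H^\bG(-)\circ(-)_{|H}$ is exact and faithful.

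For the second assertion the proposal does not go through, and you have correctly sensed where the trouble lies.  The step ``using the triangle identities and $\psi\circ\eta_M = ind_H^\bG(\bar\psi)\circ\eta_M$ one sees $ind_H^\bG(\overline{\psi^{-1}})\circ\psi\circ\eta_M = \eta_M$'' is circular: after substituting the displayed relation you get $ind_H^\bG(\overline{\psi^{-1}}\circ\bar\psi)\circ\eta_M$, and asserting that this equals $\eta_M = ind_H^\bG(\mathrm{id})\circ\eta_M$ is, by the uniqueness clause of Frobenius reciprocity, precisely the claim $\overline{\psi^{-1}}\circ\bar\psi = \mathrm{id}$ that you set out to prove.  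The adjunction gives you no independent handle on $\psi$ itself, only on $\psi\circ\eta_M$, and $\eta_M$ is very far from being an epimorphism.

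In fact the claim that $\bar\psi = \epsilon_{N_{|H}}\circ\psi_{|H}\circ(\eta_M)_{|H}$ is an isomorphism is false in general, so the gap cannot be patched.  Take $\bG = \bG_a\times\bG_a$, $H=\bG_a\times\{1\}$ (so $\bG/H\cong\bG_a$ is affine), and let $M=N=k[y]$ carry the $\bG$-action that factors through the second projection and acts by translation.  Then $M_{|H}$ is the trivial $H$-module of countable dimension, so
\[
ind_H^\bG(M_{|H}) \;\cong\; M_{|H}\otimes\cO(\bG/H)\;=\;k[y]\otimes k[t]\;=\;k[y,t],
\]
with $\bG$ acting only by translating $t$; under this identification $\eta_M(p(y))=p(y-t)$ and $\epsilon_{M_{|H}}(q(y,t))=q(y,0)$.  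The substitution $\psi\colon q(y,t)\mapsto q(y,t+y)$ is a $k$-algebra automorphism of $k[y,t]$ commuting with translation in $t$, hence a $\bG$-module automorphism of $ind_H^\bG(M_{|H})$, yet
\[
\bar\psi(p(y)) \;=\; \epsilon_{M_{|H}}\bigl(\psi(p(y-t))\bigr)
\;=\; \epsilon_{M_{|H}}(p(-t)) \;=\; p(0),
\]
which is not injective.  The same example defeats the paper's own proof of this assertion: the paper's argument rests on ``naturality of this splitting'' to conclude $\phi'\circ\phi=\mathrm{id}$, but naturality of $\eta$ and $\epsilon$ only applies to morphisms of the form $ind_H^\bG(f)$, and a general $\bG$-isomorphism $\psi$ is not of this form.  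So while your faithfulness argument stands, the second part of the proposition needs either a different construction of the comparison map or additional hypotheses; neither your outline nor the paper's proof, as written, establishes it.
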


\begin{proof}
To prove that $ind^{\bG}_H(-)\circ (-)_H$ is faithful, we observe for $\bG$-modules $N$ and $M$ 
that if $f: N_{|H} \to M_{|H}$ is a map of 
$H$-modules, then $ind^{\bG}_H(f)$: $ind^{\bG}_H(N_{|H}) 
\to ind^{\bG}_H(M_{|H})$ has the property
that pre-composition with the canonical $\bG$-map $i_N: N \to ind^{\bG}_H(N_{|H})$ and 
post-composition with the canonical $H$-map 
$ev_H: ind^{\bG}_H(M_{|H})\\ \twoheadrightarrow M_{|H}$ equals $f: N_{|H} \to M_{|H}$.

Let $N, \ M$ be $\bG$-modules and let $\psi: ind^{\bG}_H(N_{|H}) \to ind^{\bG}_H(M_{|H})$ be 
an isomorphism of $\bG$-modules.  Observe that the identity map $N \to N_{|H}$ determines
$i_N: N \to ind^{\bG}_H(N_{|H})$ which is inverse to the canonical map $ev_N: ind^{\bG}_H(N_{|H}) \to
N_{|H}$ of $H$-modules.  The naturality of this splitting implies that the map
$ev_M \circ \psi \circ i_N: N \to M_H$ of $H$-modules when composed with $ev_N \circ \psi^{-1} \circ i_M:
M \to N_H$ equals the identity of $N_H$.  This implies that $ev_M \circ \psi \circ i_N$ is an
isomorphism of $H$-modules.
\end{proof}

\vskip .1in

In  \cite[\S2]{HNS}, 
Hardesty, Nakano, and Sobaje provide a method for constructing many proper mock
injective modules.  We elaborate upon their construction in the following theorem.

\begin{thm} (cf. \cite[Prop 2.1.1, Prop 2.2.2]{HNS}
\label{thm:HNS}
Let $\bG$ be a linear algebraic group defined over $\bF_p$, let $q = p^r$ be some power of $p$, and 
assume that $k$ contains $\bF_q$.  Let $H$ be a subgroup of $\bG(\bF_q)$ stable 
under the action of $F^r$, the $r$-th power of the Frobenius map.  
\begin{enumerate}
\item
Then  \ $ind^{\bG}_H(-): Mod(H) \quad \to \quad Mod(\bG)$ \
 takes values in mock injective $\bG$-modules.   
\item
If every irreducible $H$ module is the restriction of a $\bG$-module, 
then $ind^{\bG}_H(M)$ is a proper mock injective $\bG$-module if and only if the $M$ is not 
an injective $H$-module.
\end{enumerate}
\end{thm}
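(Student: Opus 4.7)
The plan is to prove the two parts in turn, with the bulk of the work going to part (1). The cornerstone of (1) is the observation that $H\cap \bG_{(s)}=1$ for every $s\ge 1$: any $g\in H\cap \bG_{(s)}$ satisfies $F^r(g)=g$ (as a point of $\bG(\bF_q)$) and $F^s(g)=1$ (as a point of $\bG_{(s)}$), forcing $g=F^{rs}(g)=1$. Since $\bG_{(s)}$ is normal in $\bG$, this makes $K_s:=\bG_{(s)}\cdot H$ a closed subgroup of $\bG$ carrying the semidirect product structure $\bG_{(s)}\rtimes H$, with multiplication $\bG_{(s)}\times H\to K_s$ an isomorphism of schemes.

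Using transitivity of induction,
\[
ind^{\bG}_H(M)\;\cong\;ind^{\bG}_{K_s}\bigl(ind^{K_s}_H(M)\bigr),
\]
I proceed via two computations. First, the scheme isomorphism $K_s\cong \bG_{(s)}\times H$ together with $H\cap \bG_{(s)}=1$ yields $ind^{K_s}_H(M)|_{\bG_{(s)}}\cong k[\bG_{(s)}]\otimes M$ as a $\bG_{(s)}$-module (with trivial action on $M$), which is free and hence injective. Second, $ind^{\bG}_{K_s}(N)|_{\bG_{(s)}}$ is $\bG_{(s)}$-injective whenever $N|_{\bG_{(s)}}$ is: normality of $\bG_{(s)}$ gives $\bG_{(s)}\subset gK_sg^{-1}$ for every $g\in\bG$, and one identifies the restriction as the sections of a sheaf of twists of $N|_{\bG_{(s)}}$ over the affine quotient $\bG/K_s\cong \bG^{(s)}/H$. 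This second computation is the main technical obstacle: since $K_s$ has infinite index in $\bG$, no ordinary finite Mackey formula is available, so one must verify sheaf-theoretically that a ``continuous product'' of twists of an injective $\bG_{(s)}$-module over an affine base remains $\bG_{(s)}$-injective, leveraging that injectives over the finite group scheme $\bG_{(s)}$ are closed under arbitrary products and automorphism-twists.

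For (2), Proposition \ref{prop:ind-H}(2) gives one direction: if $M$ is $H$-injective, then $ind^{\bG}_H(M)$ is $\bG$-injective, hence not proper mock. Conversely, suppose $M$ is not $H$-injective. Since $H$ is finite, some irreducible $H$-module $L$ satisfies $Ext^1_H(L,M)\ne 0$, and by hypothesis $L=\tilde L|_H$ for some finite-dimensional $\bG$-module $\tilde L$. Combining the projection formula $ind^{\bG}_H(\tilde L^*|_H\otimes M)\cong \tilde L^*\otimes ind^{\bG}_H(M)$ with Proposition \ref{prop:ind-H}(3) yields
\[
Ext^1_\bG\bigl(\tilde L,ind^{\bG}_H(M)\bigr)\;=\;H^1\bigl(H,L^*\otimes M\bigr)\;=\;Ext^1_H(L,M)\;\ne\;0,
\]
so $ind^{\bG}_H(M)$ is not $\bG$-injective, and combined with (1) it is proper mock injective.
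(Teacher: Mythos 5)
Your approach to part (1) is genuinely different from the paper's, and it has a real gap precisely at the point you flag as ``the main technical obstacle.'' The paper does \emph{not} decompose $ind^\bG_H$ through the intermediate group $K_s = \bG_{(s)}\cdot H$; instead it uses the tensor identity directly. Set $L = (F^{rm})^*(k[\bG]^R)$ for $m$ large enough that $rm\geq s$. Then $L$ is trivial as a $\bG_{(s)}$-module (Frobenius kernel killed by $F^{rm}$), while $F^{rm}$ is the identity on $H\subset\bG(\bF_q)$, so $L_{|H}\cong k[\bG]^R_{|H}$ is injective (as $\bG/H$ is affine). Hence $M\otimes L_{|H}$ is an injective $H$-module, so $ind_H^\bG(M\otimes L_{|H})$ is an injective $\bG$-module by Proposition \ref{prop:ind-H}(2). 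The tensor identity gives $ind_H^\bG(M)\otimes L\cong ind_H^\bG(M\otimes L_{|H})$, and since $L$ is trivial over $\bG_{(s)}$, $(ind_H^\bG(M))_{|\bG_{(s)}}$ is a direct summand of the injective $\bG_{(s)}$-module $(ind_H^\bG(M\otimes L_{|H}))_{|\bG_{(s)}}$. Done.

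The gap in your argument: you assert, but do not prove, that $ind^\bG_{K_s}(N)_{|\bG_{(s)}}$ is $\bG_{(s)}$-injective whenever $N_{|\bG_{(s)}}$ is. Your sketch appeals to ``injectives over $\bG_{(s)}$ are closed under arbitrary products and automorphism-twists,'' but the module of global sections of a sheaf over the affine base $\bG/K_s$ is emphatically \emph{not} the product of its fibers --- it is a much smaller $\cO(\bG/K_s)$-module --- so closure of injectives under products does not apply. What you actually need is a statement along the lines of: a finitely generated $\cO(\bG/K_s)$-projective module equipped with an $\cO(\bG/K_s)$-linear $\bG_{(s)}$-action, all of whose closed fibers are injective $\kappa(x)\bG_{(s)}$-modules, is itself an injective $k\bG_{(s)}$-module (plus a colimit argument to drop the finite-generation hypothesis on $M$). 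That is plausibly true --- it amounts to an ``openness of projectivity in flat families over the Frobenius algebra $k\bG_{(s)}$'' argument --- but it is not obvious and you have not supplied it. As written, the proposal does not establish part (1). Your preliminary steps (that $H\cap\bG_{(s)}=1$, that $K_s\cong\bG_{(s)}\rtimes H$ with $\bG_{(s)}$ acting trivially on $\bG/K_s$, and that $ind^{K_s}_H(M)_{|\bG_{(s)}}\cong k[\bG_{(s)}]\otimes M$ is free) are all correct and give a nice conceptual picture; the missing piece is exactly what the tensor-identity trick is engineered to avoid.

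Part (2) is fine and matches the argument the paper defers to \cite{HNS}: use the tensor/projection formula together with Proposition \ref{prop:ind-H}(3) to produce a nonvanishing $Ext^1_\bG$, showing $ind^\bG_H(M)$ is not $\bG$-injective when $M$ is not $H$-injective, with the converse supplied by Proposition \ref{prop:ind-H}(2).
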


\begin{proof}
The proof that $ind^{\bG}_H(-)$ takes values in mock injective $\bG$-modules
is essentially given in \cite{HNS}; we give details of the formulation stated above.
We first observe that the restriction $(k[\bG]^R)_{H}$ of the $\bG$-module $k[\bG]^R$ to $H$ is 
an injective $H$-module since $\bG/H$ is affine (see \cite{CPS77}).  Moreover, if $I$ is an
injective $\bG$-module, then the embedding $Soc(I) \hookrightarrow Soc(I)^{tr} \otimes k[\bG]^R$
extends to an embedding $I  \hookrightarrow Soc(I)^{tr} \otimes k[\bG]^R$ which splits
as a map of $\bG$-modules, so that $I_{|H}$ must be injective as an $H$-module.
If $F^r: \bG \to \bG$ restricts to an isomorphism on $H$, then the restriction $L_{|H}$
to $H$ of the $\bG$-module $L \equiv (F^r)^*(k[\bG]^R)$ is an injective $H$-module;
this implies that $M \otimes L_{|H}$ is an injective $H$-module for any $H$-module $M$. 

The tensor identity (see \cite[I.3.6]{J}) tells us that for any $H$-module $M$ 
there is a natural isomorphism
\begin{equation}
\label{eqn:ind-iso}
ind_H^\bG(M) \otimes L \quad \simeq \quad ind_H^\bG(M \otimes L_{|H})
\end{equation}
of $\bG$-modules.  Since $L$ is trivial as a $\bG_{(r)}$-module, (\ref{eqn:ind-iso}) implies 
that $(ind_H^\bG(M))_{|\bG_{(r)}}$ is a direct summand of $(ind_H^\bG(M \otimes L_{|H}))_{|\bG_{(r)}}$. 
Since $L_{|H}$ is injective as an $H$-module, $M \otimes L_{|H}$ is injective as an $H$-module,
so that Proposition \ref{prop:ind-H}(2) tells us that $ind_H^\bG(M \otimes L_{|H})$ is injective as a $\bG$-module.
As seen above, this implies that $(ind_H^\bG(M \otimes L_{|H}))_{|\bG_{(r)}}$ is injective as a $\bG_{(r)}$-module.
Thus, the direct summand $(ind_H^\bG(M))_{|\bG_{(r)}}$ is also injective as a $\bG_{(r)}$-module.  Since this
applies to any $r>0$, we conclude that $ind_H^\bG(M)$ is a mock injective $\bG$-module.

Assertion (2) follows  from a simple argument using $Ext$-groups as detailed in \cite{HNS}.
\end{proof}

\vskip .1in

The following observation is of interest in view of the challenge of studying quotients of 
infinite dimensional $\bG$-modules as seen in Theorem \ref{thm:mock-quotients}.

\begin{cor}
\label{cor:short-exact}
As in Theorem \ref{thm:HNS}, consider a linear algebraic group $\bG$ defined over $\bF_p$,
assume $\bF_q \subset k$ with $q = p^r$, and consider a subgroup  $H \subset \bG(\bF_q)$ stable 
under the action of $F^r$.  If $0 \to N \to M \to Q \to 0$ is an exact sequence in $Mod(H)$, then
\begin{equation}
\label{eqn:mock-short}
0 \to \ ind^{\bG}_H(N) \quad \to \quad ind^{\bG}_H(M)  \quad \to \quad
ind^{\bG}_H(Q) \to 0.
\end{equation}
is a short exact sequence of mock injective $\bG$-modules.
\end{cor}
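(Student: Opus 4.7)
The plan is to derive this corollary as an immediate consequence of two results already established in Theorem \ref{thm:HNS} and Proposition \ref{prop:ind-H}. Since $H$ is a subgroup of $\bG(\bF_q)$, it is a finite (in fact étale) subgroup scheme of the affine group scheme $\bG$, so the quotient $\bG/H$ is affine by the standard fact that the quotient of an affine group scheme by a finite subgroup scheme is again affine. The hypothesis of Proposition \ref{prop:ind-H}(1) is therefore met, and the induction functor $ind_H^{\bG}(-): Mod(H) \to Mod(\bG)$ is exact.

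Applying this exact functor to the given short exact sequence $0 \to N \to M \to Q \to 0$ in $Mod(H)$ immediately produces the short exact sequence (\ref{eqn:mock-short}) in $Mod(\bG)$. To finish, I need only identify each of the three $\bG$-modules appearing there as mock injective. But this is precisely the content of Theorem \ref{thm:HNS}(1): under the running hypotheses that $\bG$ is defined over $\bF_p$, $\bF_q \subset k$, and $H \subset \bG(\bF_q)$ is $F^r$-stable, every $\bG$-module of the form $ind_H^{\bG}(L)$ is mock injective.

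Thus the corollary reduces to a two-line argument (exactness plus pointwise mock injectivity), and the only minor step requiring comment is the affineness of $\bG/H$ for $H$ finite, which allows the appeal to Proposition \ref{prop:ind-H}(1). There is no genuine obstacle, and no delicate case analysis is needed; the two ingredients assemble into the statement directly.
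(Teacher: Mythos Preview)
Your proposal is correct and is exactly the argument the paper intends: the corollary is stated without proof precisely because it follows immediately from the exactness of $ind_H^{\bG}(-)$ (Proposition~\ref{prop:ind-H}(1), using that $\bG/H$ is affine for $H$ finite) together with Theorem~\ref{thm:HNS}(1).
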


\vskip .1in

We make explicit the preceding discussion in the 
special case in which $\bG$ equals the additive group $\bG_a$.   
Notice that the third assertion shows how cofinite type can distinguish isomorphism classes
of mock injective modules, whereas the last assertion emphasizes that proper mock injective $\bG$-modules
are unlikely to have finite injective dimension.

\begin{ex}
\label{ex:Ga-mock}
Let $J_d$ denote the  proper mock injective $\bG_a$-module $ind^{\bG}_{ \bG_a(\bF_{p^d})}(k)$,
equipped with the natural $\bG_a$-equivariant embedding into $k[t]^L$
(which we denote by $\cO(\bG_a) \ = \ k[t]$ since $\bG_a$ is abelian).
\begin{enumerate}
\item
With respect to this embedding, $J_d$ can be identified as the subalgebra of $k[t]$
consisting of polynomials in $t^{p^d}-t$.  This is verified in greater generality in Proposition \ref{prop:ind}
below.
\item
$\cO(\bG_a)/J_d$ admits an embedding into $\cO(\bG_a)^{\oplus d}$.  Namely, the images of
$\{ t, t^p, \ldots, t^{p^{d-1}} \}$ in $\cO(\bG_a)/J_d$ form a basis for the socle of $\cO(\bG_a)/J_d$.  
\item
The cofinite type $\gamma(\bG_a,\phi)_{J_d}$ of the $\bG_a$-module $J_d$ equals $(1,q^{-1})$.   
This readily follows from (1) above.  On the other hand, the cofinite type of $\cO(\bG)/J_d$
equals $(q-1,q^{-1})$.
\item
In the special case $d = 1$, $J_1 \ = \ ind^{\bG}_{ \bG_a(\bF_p)}(k)$ has an injective resolution
$$J_1 \ \hookrightarrow \ I^0 \stackrel{d^1}{\to} I^1 \stackrel{d^2}{\to} I^2 \to \cdots 
\to I^{n-1} \stackrel{d^n}{\to} I^n \cdots $$
 with the property that each $I^n$ is isomorphic to $\cO(\bG_a)$ (i.e., the injective hull of $k$) and that
 $d^n = d^{n+2}$.  
 \end{enumerate}
\end{ex}

\vskip .1in

The identification of $ind_H^\bG(k)$ used in Example \ref{ex:Ga-mock} has the following
elaboration.  The key to the proof of this proposition is 
the Lang isomorphism $\bG/\bG(\bF_q) \ \stackrel{\sim}{\to} \ \bG$, an isomorphism of varieties 
over $k$ \cite{Lang}.

\begin{prop}
\label{prop:ind}
Let $q = p^r$ and assume that $\bF^q \subset k$.  Consider a 
linear algebraic group $\bG$ defined over $\bF_p$ provided with an 
embedding $\phi: \bG \hookrightarrow GL_N$ defined over $\bF_p$
and let $H$ denote the finite group $\bG(\bF_q)$.
Recall the Lang map
\begin{equation}
\label{eqn:Lang}
F^r/id: \bG \  \twoheadrightarrow \ \bG/H \ \simeq \ \bG, \quad g \mapsto F^r(g) \cdot g^{-1},
\end{equation}
a finite \`etale map of $k$-varieties (see \cite{Lang}). 
 
The map on coordinate algebras 
$$(F^r/id)^* = \mu \circ (\sigma_\bG \otimes (F^r)^*) \circ \Delta:  \cO(\bG) \ \to 
\cO(\bG) \otimes \cO(\bG) \to \cO(\bG), \quad f \ \mapsto  \sigma_\bG(f) \cdot f^r$$ 
has image the $\bG$-submodule $ind_H^\bG(k) = (k[\bG]^L)^H \subset k[\bG]^L \simeq k[\bG]^R$.
This implies that 
\begin{equation}
(k[\bG]^R)_{\leq \frac{d}{2N-1+q},\phi} \quad \subset \quad (ind_H^G(k))_{\leq d,\phi} 
\quad \subset \quad (k[\bG]^L)_{\leq d,\phi}.
\end{equation}
\end{prop}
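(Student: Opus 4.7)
The plan is to identify the image of $(F^r/\id)^*$ using Lang's theorem, and then to bound filtration degrees through the constituent maps $\Delta$, $\sigma_\bG \otimes (F^r)^*$, and $\mu$.

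First I would observe that $L(gh) = F^r(g) F^r(h) h^{-1} g^{-1} = F^r(g) g^{-1} = L(g)$ for $h \in H = \bG(\bF_q)$, since $F^r$ fixes $H$ pointwise. Thus $L$ factors through the geometric quotient $\bG/H$, and by Lang's theorem \cite{Lang} this factorization is an isomorphism $\bG/H \stackrel{\sim}{\to} \bG$. Consequently $L^*: \cO(\bG) \to \cO(\bG)$ is injective with image equal to the $H$-invariants; under the antipode identification $k[\bG]^L \simeq k[\bG]^R$, this matches $\ind_H^\bG(k) = (k[\bG]^L)^H$.  The stated formula $L^* = \mu \circ (\sigma_\bG \otimes (F^r)^*) \circ \Delta$ follows by dualizing the factorization of $L$ as diagonal, then inversion on one factor and $F^r$ on the other, then multiplication, using commutativity of $\cO(\bG)$ to absorb tensor-factor order.

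For the filtration bound, the subcoalgebra property yields $\Delta(\cO(\bG)_{\leq d', \phi}) \subset \cO(\bG)_{\leq d', \phi} \otimes \cO(\bG)_{\leq d', \phi}$. From the proof of Proposition \ref{prop:left-right} I extract the bound $\sigma_\bG(\cO(\bG)_{\leq d', \phi}) \subset \cO(\bG)_{\leq (2N-1)d', \phi}$. Since $(F^r)^*$ raises matrix coordinates to the $q$-th power, it sends $\cO(\bG)_{\leq d', \phi}$ into $\cO(\bG)_{\leq qd', \phi}$. The submultiplicative property \eqref{eqn:mubG} then gives $L^*(\cO(\bG)_{\leq d', \phi}) \subset \cO(\bG)_{\leq (2N-1+q)d', \phi}$, and setting $d' = d/(2N-1+q)$ places the image in $\cO(\bG)_{\leq d, \phi}$.

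The two filtration inclusions follow from this bound. For the right inclusion $(\ind_H^\bG(k))_{\leq d, \phi} \subset (k[\bG]^L)_{\leq d, \phi}$, I would invoke Lemma \ref{lem:ker-coker}(1): since $\ind_H^\bG(k)$ is a $\bG$-submodule of $k[\bG]^L$, its largest $\cO(\bG)_{\leq d, \phi}$-subcomodule sits inside that of the ambient module. For the left inclusion, I would use Proposition \ref{prop:advantages}(3) to identify $(k[\bG]^R)_{\leq d/(2N-1+q), \phi}$ with $\cO(\bG)_{\leq d/(2N-1+q), \phi}$, then invoke the filtration bound to place $L^*$ of this subspace inside $\cO(\bG)_{\leq d, \phi} \cap \ind_H^\bG(k)$; since $\cO(\bG)_{\leq d, \phi}$ is itself a subcoalgebra, any element in this intersection has its coaction factoring through $\cO(\bG)_{\leq d, \phi}$, giving the desired subcomodule containment.

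I anticipate the principal obstacle to be the careful bookkeeping required to reconcile the left and right regular representations of $\bG$ on $\cO(\bG)$, between which the Lang map moves us via the antipode. Because $\sigma_\bG$ multiplies filtration degree by up to $2N-1$, the subcomodule filtrations for $k[\bG]^L$ and $k[\bG]^R$ do not coincide as subspaces of $\cO(\bG)$, and one must verify that the image of $L^*$ on a low-filtration subspace lands in the largest $\cO(\bG)_{\leq d, \phi}$-subcomodule of $\ind_H^\bG(k)$ rather than merely inside the intersection of subspaces.  The factor $2N-1+q$ in the bound transparently accounts for both the Frobenius contribution ($q$) and the antipode contribution ($2N-1$) carried by $L$.
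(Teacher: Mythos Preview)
Your proposal is correct and follows essentially the same approach as the paper. The paper's proof is considerably terser: it records only the two filtration bounds $\sigma_\bG(\cO(\bG)_{\leq d,\phi}) \subset \cO(\bG)_{\leq (2N-1)d,\phi}$ (imported from Proposition~\ref{prop:left-right}) and $(F^r)^*(\cO(\bG)_{\leq d,\phi}) \subset \cO(\bG)_{\leq qd,\phi}$, then concludes $(F^r/id)^*(\cO(\bG)_{\leq d,\phi}) \subset \cO(\bG)_{\leq (2N-1+q)d,\phi}$, leaving the identification of the image via Lang's theorem and the passage to the displayed chain of inclusions implicit. You have filled in exactly those omitted steps---the subcoalgebra property of $\Delta$, the multiplicativity bound \eqref{eqn:mubG}, Proposition~\ref{prop:advantages}(3), and Lemma~\ref{lem:ker-coker}(1)---and correctly flagged the left/right bookkeeping as the only point requiring care.
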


\begin{proof}
As seen in the proof of Proposition \ref{prop:left-right}, $\sigma_{GL_N}(\cO(GL_N)_{\leq d}) \subset
\cO(GL_N)_{\leq (2N-1)\cdot d}$, so that
$\sigma_\bG(\cO(\bG)_{\leq d,\phi}) \ \subset \ \cO(\bG)_{\leq (2N-1)\cdot d,\phi}$.  Moreover, if 
$f \in \cO(GL_N)_{\leq d}$, then $(F^r)^*(f) \in \cO(GL_N)_{\leq qd}.$ 
Consequently,
$(F^r/id)^*(\cO(\bG)_{\leq d,\phi}) \ \subset \ \cO(\bG)_{\leq (2N-1+q)d,\phi}.$
\end{proof}

\vskip.1in

In the next example, we determine the cofinite type of the mock injective $\bU_N$-modules
$ind_{\bU_N(\bF_q)}^{\bU_N}(k) \subset k[\bU_N]^L$.  See Example \ref{ex:unipotent} for the
description of $\cO(\bU_N)_{\leq d,\phi}$.   For notational convenience, we omit explicit mention
of the natural inclusion $\phi: \bU_N \hookrightarrow GL_N$.

\begin{ex}
\label{ex:UN-mock}
Consider the antipode $\sigma_{\bU_N}: \cO(\bU_N) = k[x_{i,j}, i <j] \to k[y_{s,t}, s < t] = \cO(\bU_N)$.
Observe that $\sigma_{\bU_N}(\cO(\bU_N)_{\leq d}) \subset \cO(\bU_N)_{\leq (N-1)d}$,
that $\sigma_{\bU_N}(x_{1,N}) \notin \cO(\bU_N)_{\leq N-2}$, and that
$\sigma_{\bU_N}(x_{i,j}) \in \cO(\bU_N)_{\leq N-2}$ and $\sigma_{\bU_N}(x_{i,j}) \in 
\cO(\bU_N)_{\leq N-23} $for $(i,j) \not= (1,N)$.

To determine the cofinite type of $ind_{\bU_N(\bF_q)}^{\bU_N}(k)$ (which we denote by $M$), 
we consider the image of 
$$(F^r/id)^*: k[\bU_N]_{\leq d} \quad \to \quad M_{\leq (p^r+N-1)d} \ \subset \ k[\bU_N]_{\leq (p^r+N-1)d},$$
and proceed to show that $d \mapsto  dim(M_{\leq (p^r+N-1)d})$
has the same polynomial degree and the same leading coefficient as 
$d \mapsto dim((F^r/id)^*(k[\bU_N]_{\leq d})) = dim(k[\bU_N]_{\leq d})$
which was computed explicitly in Example \ref{ex:cofinite}.   
Namely, the above observations about the filtration degree of $\sigma_{\bU_N}(x_{i,j})$ imply
the short exact sequence 
$$0 \to (F^r/id)^*(k[\bU_N]_{\leq d}) \quad \to \quad M_{\leq (p^r+N-1)d} \quad \to \quad 
x_{1,N} \bu (F^r/id)^*(k[\bU_N]_{\leq d-1}) \to 0.$$

Consequently,
$$\gamma(\bU_N)_M \ = \ (N^\prime,((p^r+N-1)^{N^\prime}N!)^{-1}) \quad N^\prime = \frac{N^2-N}{2}.$$
\end{ex}

\vskip .1in

\begin{ex}
\label{ex:GL-mock}
One can apply the argument of Example \ref{ex:UN-mock} to obtain a similar computation for $GL_N$
replacing the structure of $\cO(\bU_N)$
as a polynomial algebra by the ``sum expansion" of $\cO(GL_N)$ given in (\ref{eqn:choose-sum}).
We then conclude for $M = ind_{GL_N(\bF_q)}^{GL_N}(k)$ that
$$\gamma(GL_N)_M \ = \ (N^2,((p^r+N-1)^{N^2}(N^2)!)^{-1}).$$
\end{ex}

\vskip .2in


\section{Stable Module Categories}
\label{sec:stable}

In Theorem \ref{thm:categories}, we provide a stable module-theoretic version of Theorem \ref{thm:HNS} 
by dividing out injective modules, thereby focusing upon proper mock injective modules.  We introduce
the tensor triangulated category $StMock(\bG)$, complementary to stable categories considered in \cite{F23}.

We remind the reader that the stable module category $StMod(G)$ of a finite group scheme $G$
over $k$ is the category whose objects are $G$-modules (i.e., $kG$-modules) and whose space of maps
$Hom_{StMod(G)}(M,N)$ for any pair $M, \ N$ of $G$-modules is the quotient of $Hom_G(M,N)$
by the subspace of maps $M \to N$ of $G$-modules which factor through an injective $G$-module.
Using the fact that $kG$ is self-injective, one provides $StMod(G)$ with the structure of a 
tensor triangulated category.  As shown in \cite[Thm 4.2]{F23}, there is a natural tensor triangulated equivalence 
\begin{equation}
\label{eqn:KbM}
StMod(G) \quad \stackrel{\sim}{\to} \quad K^b(Mod(G))/\cI nj(Mod(G)).
\end{equation}
natural with respect to finite group schemes $G$.  Here, $\cI nj^b(Mod(G))$ denotes the homotopy
category of bounded cochain complexes of $G$-modules which are quasi-isomorphic  to a 
bounded complex of injective $G$-modules.

\vskip .1in

With this in mind, we make the following definition, changing the use of the notation $StMod(\bG)$,
notation which was used in \cite{F23} for
the tensor triangulated category which we denote below by $\ol{StMod}(\bG)$.

\begin{defn}
\label{defn:StMod}
Let $\bG$ be a linear algebraic group.
Denote by $\cK^b(Mod(\bG))$ the tensor triangulated category given as the homotopy category of
 bounded cochain complexes of $\bG$-modules and denote by 
$\cI nj^b(\bG) \hookrightarrow \cK^b(Mod(\bG))$ the thick subcategory of bounded cochain 
complexes $C^\bu$ quasi-isomorphic to a bounded complex of injective $\bG$-modules.
We define $StMod(\bG)$ to be Verdier quotient
$$\cK^b(Mod(\bG)) \quad \twoheadrightarrow \quad \cK^b(Mod(\bG))/\cI nj^b(Mod(\bG)) 
\quad \equiv \quad StMod(\bG).$$

Denote by $\cM ock^b(\bG) \hookrightarrow \cK^b(Mod(\bG))$ the thick subcategory given as the 
homotopy category
of bounded cochain complexes $C^\bu$ whose restriction to $Mod(\bG_{(r)})$ is quasi-isomorphic
to a bounded complex of injective $\bG_{(r)}$-modules for each $r > 0$.
We define $StMock(\bG)$ to be Verdier quotient
$$\cM ock^b(\bG) \quad \twoheadrightarrow \quad \cM ock^b(\bG)/\cI nj^b(Mod(\bG)) \quad \equiv \quad StMock(\bG).$$

Finally, we define $\ol{StMod}(\bG)$ to be the Verdier quotient
\begin{equation}
\label{eqn:Verd-St}
StMock(\bG)  \quad \twoheadrightarrow \quad StMock(\bG)/\cM ock^b(\bG) \quad \equiv \quad \ol{StMod}(\bG).
\end{equation}
\end{defn}

\vskip .1in

The following proposition relates classes of $\bG$-modules 
to their associated classes in the appropriate Verdier quotient formulated using
bounded cochain complexes of $\bG$-modules.

\begin{prop}
Let $\bG$ be a linear algebraic group and $M$ a $\bG$-module determining
the chain complex $M[0] \in \cK^b(Mod(\bG))$.  Then the class of $M[0]$ in
$\ol{StMod}(\bG)$ is $0$ if and only if $M$ is a mock injective.  

Moreover, if
$M$ is a mock injective  $\bG$-module, the class in $StMock(\bG)$ of $M[0] \in 
\cM ock^b(\bG)$ is $0$ if and only if $M$ is an injective $\bG$-module. 
\end{prop}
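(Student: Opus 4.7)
The plan is to unwind the Verdier‐quotient definitions and reduce both assertions to the question of when a single‐module complex $M[0]$ belongs, up to isomorphism in $\cK^{b}(Mod(\bG))$, to the thick subcategories $\cI nj^{b}(\bG)$ and $\cM ock^{b}(\bG)$. The key claim is the following pair of equivalences: (a) $M[0]\in \cI nj^{b}(\bG)$ iff $M$ is an injective $\bG$-module; (b) $M[0]\in \cM ock^{b}(\bG)$ iff $M$ is mock injective. The reverse implications are immediate, since an injective $M$ is itself a bounded complex of injectives, and a mock injective $M$ by definition restricts to an injective $\bG_{(r)}$-module for every $r>0$. For the forward direction of (a), any isomorphism between $M[0]$ and a bounded complex of injectives $I^{\bu}$ in $\cK^{b}(Mod(\bG))$ amounts to a homotopy equivalence, supplying chain maps $f\colon M[0]\to I^{\bu}$ and $g\colon I^{\bu}\to M[0]$ with $g\circ f\sim \id_{M[0]}$. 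Because $M[0]$ is concentrated in degree $0$, there is no room for a nontrivial chain homotopy on it, so $g^{0}\circ f^{0}=\id_{M}$ holds strictly; thus $f^{0}\colon M\to I^{0}$ realizes $M$ as a direct summand of the injective $I^{0}$, forcing $M$ to be injective. Claim (b) follows by applying (a) in $\cK^{b}(Mod(\bG_{(r)}))$ for each $r>0$, using that $(M[0])_{|\bG_{(r)}}=(M_{|\bG_{(r)}})[0]$ and recalling that for a finite group scheme this is already equivalent to the restriction having finite injective dimension, since $k\bG_{(r)}$ is self-injective.

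For Part 1, I would then note that every injective $\bG$-module is mock injective, which gives the chain of thick subcategories $\cI nj^{b}(\bG)\subset \cM ock^{b}(\bG)\subset \cK^{b}(Mod(\bG))$; passing to the Verdier quotient by $\cI nj^{b}(\bG)$ yields $0\subset StMock(\bG)\subset StMod(\bG)$. By the standard vanishing criterion in a Verdier quotient $\cD/\cT$ by a thick subcategory (an object is zero iff, up to isomorphism in $\cD$, it lies in $\cT$), the class of $M[0]$ in $\ol{StMod}(\bG)=StMod(\bG)/StMock(\bG)$ vanishes iff $M[0]$, viewed in $StMod(\bG)$, lies in the subcategory $StMock(\bG)$. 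Because $\cM ock^{b}(\bG)$ is thick and contains $\cI nj^{b}(\bG)$, an elementary chase with triangles shows this is equivalent to $M[0]\in \cM ock^{b}(\bG)$ in $\cK^{b}(Mod(\bG))$, which by (b) is equivalent to $M$ being mock injective. For Part 2, the same vanishing criterion applied to $StMock(\bG)=\cM ock^{b}(\bG)/\cI nj^{b}(\bG)$ gives: for a mock injective $M$, the class of $M[0]$ is zero in $StMock(\bG)$ iff $M[0]\in \cI nj^{b}(\bG)$, which by (a) is equivalent to $M$ being injective.

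The main obstacle is the characterization in step (a). The delicate point is that ``lying in the thick subcategory $\cI nj^{b}(\bG)$'' must be interpreted as being isomorphic in $\cK^{b}(Mod(\bG))$, i.e.\ homotopy equivalent, to a bounded complex of injectives, rather than merely quasi-isomorphic to one. It is precisely the absence of room for chain homotopies on the single-module complex $M[0]$ that pins down $M$ as a genuine direct summand of the injective degree-$0$ term $I^{0}$, rather than only allowing $M$ to possess a finite injective resolution; this is the content that the rest of the proof is free to take for granted once (a) and (b) are in hand.
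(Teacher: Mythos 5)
Your overall strategy matches the paper's: reduce everything to the question of when $M[0]$ belongs to the thick subcategories $\cI nj^b(\bG)$ and $\cM ock^b(\bG)$, then invoke the standard vanishing criterion in a Verdier quotient. You fill in more detail than the paper (which, for part~1, simply cites \cite[Cor 4.5]{F23} to pass from ``restriction of $M[0]$ vanishes in $StMod(\bG_{(r)})$'' to ``$M_{|\bG_{(r)}}$ injective''); your route via the self-injectivity of $k\bG_{(r)}$ is the content of that citation and is fine.

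The one point worth emphasizing is your step~(a), where you read ``$M[0]\in\cI nj^{b}(\bG)$'' as ``$M[0]$ is homotopy equivalent to a bounded complex of injectives.'' You flag this as delicate, and you are right to: Definition~\ref{defn:StMod} in the paper actually defines $\cI nj^{b}(\bG)$ as the full subcategory of complexes \emph{quasi-isomorphic} to a bounded complex of injectives, which is a priori larger ($M[0]$ is quasi-isomorphic to a bounded complex of injectives whenever $M$ merely has finite injective dimension; your chain-homotopy argument then fails because the quasi-isomorphism $M[0]\to I^\bullet$ need not be invertible in $\cK^b$). Under that literal reading, the forward direction of part~2 would require the nontrivial additional fact that a mock injective $\bG$-module of finite injective dimension is already injective --- a statement neither your proposal nor the paper's proof supplies. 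The paper's own proof of part~2 implicitly adopts your reading as well: it asserts that $\ker\bigl(\cM ock^b(\bG)\to StMod(\bG)\bigr)=\cK^b(Inj(\bG))$, i.e.\ that $\cM ock^b(\bG)\cap\cI nj^b(\bG)$ reduces to genuine bounded complexes of injectives, which is exactly the identification your step~(a) provides. So your argument is correct given the (likely intended) interpretation of $\cI nj^b(\bG)$; you have in effect located and patched a small imprecision in the paper's statement of the definition rather than introduced a gap of your own. Note also that your step~(b) is secure under either reading, since over the self-injective algebra $k\bG_{(r)}$ finite injective dimension already forces injectivity.
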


\begin{proof}
If $M$ is a mock injective $\bG$-module, then essentially by definition
the class of $M[0]$ is 0.  Conversely, 
if the class of $M[0]$ in $\ol{StMod}(\bG)$ is $0$, then
$M[0] \in StMock(\bG)$.  In this case, the restriction to each $\bG_{(r)}$ of $M[0]$
is $0 \in StMock(\bG_{(r)})$.   By \cite[Cor 4.5]{F23}, this implies each $M_{|\bG_{(r)}}$ is an injective
$\bG_{(r)}$ module so that $M$ is a mock injective $\bG$-module.

The second assertion follows from the fact that the kernel of $\cM ock^b(\bG)
\to StMod(\bG)$ equals $\cK^b(Inj(\bG))$, since $\cK^b(Inj(\bG)) \ \subset \ \cM ock^b(\bG)$
is a thick subcategory.
\end{proof}

\vskip .1in

The following theorem summarizes the relationships between these various stable categories,
providing a form of ``categorization" of Theorem \ref{thm:HNS}

\begin{thm}
\label{thm:categories}
Let $\bG$ be a linear algebraic group defined over $\bF_p$, let $q = p^r$ be some power of $p$, and 
assume that $k$ contains $\bF_q$.  Let $H$ be a subgroup of $\bG(\bF_q)$ stable 
under the action of $F^r$, the $r$-th power of the Frobenius map.  Then there is a commutative
diagram of tensor triangulated categories and tensor triangulated functors
\begin{equation}
\label{eqn:sub-triang}
 \xymatrixcolsep{5pc}\xymatrix{
 \cI nj^b(Mod(H)) \ar[d]^{\hookrightarrow} \ar[r]^-{ind_H^\bG(-)} &  \cI nj^b(Mod(\bG)) \ar[d]^{\hookrightarrow} 
 \ar[r]^-= & \cI nj^b(Mod(\bG)) \ar[d]^{\hookrightarrow} \\
 D^b(Mod(H)) \ar[d]^\twoheadrightarrow  \ar[r]^-{ind_H^\bG(-)} & D^b(Mock(\bG))\ar[d]^\twoheadrightarrow 
   \ar[r]^-{\hookrightarrow} & D^b(Mod(\bG)) \ar[d]^\twoheadrightarrow  \\
StMod(H) \ar[r]^-{ind_H^\bG(-)}  & StMock(\bG)  \ar[dr]_0 \ar[r]^-{\hookrightarrow} & 
StMod(\bG) \ar[d]^\twoheadrightarrow  \\.
& & \ol{StMod}(\bG)
}
\end{equation}
where the derived categories are Verdier quotients of corresponding homotopy categories of cochain 
complexes ``localized" at subcategories of cochain complexes which are acylic.

The homotopy categories of injective modules in the upper row are the kernels of the Verdier quotients
of the functors from categories in the second row to categories in the third row of (\ref{eqn:sub-triang}).   
Moreover, the bottom right functor is the Verdier quotient with kernel the right horizontal map 
of the third row of (\ref{eqn:sub-triang}).
\end{thm}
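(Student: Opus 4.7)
The plan is to assemble the diagram row by row, establishing functoriality, commutativity, identification of kernels, and the zero composite. I would begin by constructing the horizontal $\ind_H^\bG$ arrows. Since $H$ is finite and $\bG/H$ is affine, Proposition \ref{prop:ind-H}(1) gives that $\ind_H^\bG$ is exact, so it extends degreewise to bounded cochain complexes and descends to $D^b(Mod(H)) \to D^b(Mod(\bG))$. Theorem \ref{thm:HNS}(1) shows that this functor factors through $D^b(Mock(\bG))$, producing the middle-row arrow. The exact left adjoint $\Res_H^\bG$ forces $\ind_H^\bG$ to preserve injectives, yielding the top row (with the right vertical equality rendering the top right square trivially commutative). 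Commutativity of the middle and lower squares is immediate, since each horizontal functor is induced from the single functor $\ind_H^\bG$ at the cochain level and the vertical Verdier quotients are compatible with any exact functor preserving the thick subcategories being divided out.

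Second, I would identify the kernels of the vertical Verdier quotients from the middle to the bottom row. By Definition \ref{defn:StMod}, $StMod(\bG) = \cK^b(Mod(\bG))/\cI nj^b(Mod(\bG))$ and $StMock(\bG) = \cM ock^b(\bG)/\cI nj^b(Mod(\bG))$; the analogous statement for the finite group scheme $H$ is the equivalence (\ref{eqn:KbM}) from \cite[Thm 4.2]{F23}. Hence the top row consists precisely of the kernels of the middle-to-bottom vertical quotients. To justify that $\cM ock^b(\bG)$ is a thick subcategory of $\cK^b(Mod(\bG))$, I would invoke the two-out-of-three closure of mock injective $\bG$-modules from Proposition \ref{prop:property-mock}(3) together with closure under direct summands. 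The tensor triangulated structures on $StMod(\bG)$, $StMock(\bG)$, and $\ol{StMod}(\bG)$ follow because both $\cI nj^b(Mod(\bG))$ and $\cM ock^b(\bG)$ are tensor ideals in $\cK^b(Mod(\bG))$ --- the latter by Proposition \ref{prop:property-mock}(5), the former by the standard fact that tensoring with an arbitrary module preserves injectivity over a Hopf algebra.

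Finally, I would verify the bottom-right Verdier quotient and the zero composite. Applying the third-isomorphism theorem for Verdier quotients to the nested thick subcategories $\cI nj^b(Mod(\bG)) \subset \cM ock^b(\bG) \subset \cK^b(Mod(\bG))$ yields
\[
\ol{StMod}(\bG) \;\simeq\; \cK^b(Mod(\bG))/\cM ock^b(\bG) \;\simeq\; StMod(\bG)/StMock(\bG),
\]
which identifies the kernel of the surjection $StMod(\bG) \twoheadrightarrow \ol{StMod}(\bG)$ as the image of the third-row embedding $StMock(\bG) \hookrightarrow StMod(\bG)$ and simultaneously forces the composite $StMock(\bG) \to \ol{StMod}(\bG)$ to vanish, as required by the diagonal arrow labelled $0$. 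The main obstacle I anticipate is the ``tensor triangulated functors'' clause: $\ind_H^\bG$ is not symmetric monoidal on the nose but only satisfies a projection formula via the tensor identity $\ind_H^\bG(M \otimes N_{|H}) \simeq \ind_H^\bG(M) \otimes N$ (cf. \cite[I.3.6]{J}). Making precise the sense in which the induced horizontal functors between tensor triangulated categories deserve to be called ``tensor triangulated'' will require either restricting to the appropriate module-category framework over $Mod(\bG)$ or weakening the notion (say, to lax tensor functors); with that caveat addressed, all remaining claims follow from the formal Verdier-quotient machinery above.
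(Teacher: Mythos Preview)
Your proposal is correct and follows essentially the same route as the paper: exactness of $\ind_H^\bG$ (Proposition \ref{prop:ind-H}) to define the derived functors, preservation of injectives via the exact left adjoint, Theorem \ref{thm:HNS} to land in $Mock(\bG)$, and then formal Verdier-quotient arguments (naturality, thickness of the relevant subcategories) for the rest. Your treatment is in fact more detailed than the paper's proof, which dispatches the kernel assertions in a single sentence and does not explicitly invoke the third-isomorphism theorem for Verdier quotients; your concern about the ``tensor triangulated functors'' clause for $\ind_H^\bG$ is legitimate and is not addressed in the paper's argument either.
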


\begin{proof}
The upper left horizontal functor is well defined because $ind_H^\bG(-)$ is exact and sends injective
$H$-modules to injective $\bG$-modules.  Exactness of $ind_H^\bG(-)$ justifies the middle left horizontal
functor.  The naturality of the construction of Verdier quotients justifies the lower left horizontal functor
as well as the commutativity of the left squares of (\ref{eqn:sub-triang}) follows from the definitions.  
The commutativity of the right squares of (\ref{eqn:sub-triang}) follows from Definition \ref{defn:StMod}.

The assertion about kernels of Verdier quotients follows from the thickness of the asserted kernels.
\end{proof}

\vskip .2in


\section{Criteria for injectivity of $\bG_a$-modules}
\label{sec:co-pi}

One goal for the study of mock injective $\bG$-modules is to refine the theory
of support varieties $M \mapsto \Pi(\bG)_M$ as considered in \cite{F23}
so that the support of $M$ is empty if and only if $M$ is an injective $\bG$-module.
In the very special case of $\bG = \bG_a$, Theorem \ref{thm:Ga-detect} provides such
a refinement which suggests a possible approach for more general $\bG$.

\begin{prop}
\label{prop:comm-coalg}
For any $r > 0$, there is a commutative square of commutative coalgebras
\begin{equation}
\label{eqn:comm-coalg}
 \xymatrixcolsep{5pc}\xymatrix{ 
\cO(\bG_a)_{\leq p^{r+1}-1} \ar[r] & \cO(\bG_{a(r+1)}) \ar[r] \ar[d]& (k\bZ/p^{\times r+1})^* \ar[d] \\
\cO(\bG_a)_{\leq p^r-1} \ar[r] \ar[u] & \cO(\bG_{a(r)}) \ar[r] & (k\bZ/p^{\times r})^*
}
\end{equation}
whose horizontal arrow are isomorphisms, whose
left vertical arrow is the natural inclusion, whose middle vertical arrow is induced by 
the embedding $\bG_{(r)} \hookrightarrow \bG_{(r+1)}$,
and whose right vertical arrow is the dual of the map group algebras induced by the
embedding $\bZ/p^{\times r} \to \bZ/p^{\times r+1}$ sending $(a_1,\ldots,a_r)$
to $(a_1,\ldots,a_r,0)$.  The left horizontal arrows are induced by the natural maps
$$\cO(\bG_a)_{\leq p^r-1} \to \cO(\bG_a)_{\leq p^{r+1}-1} \to \cO(\bG_a) \to  \cO(\bG_{a(r+1)}) 
\to \cO(\bG_{a(r)})$$
and the right horizontal maps are duals of familiar isomorphisms.
\begin{enumerate}
\item
For each $r>0$, there is a coalgebra splitting 
\begin{equation}
\label{eqn:split}
s_r: \cO(\bG_a)_{\leq p^{r+1}-1} \quad \twoheadrightarrow \quad \cO(\bG_a)_{\leq p^r-1} 
\end{equation}
of the natural inclusion $\cO(\bG_a)_{\leq p^r-1} \hookrightarrow \cO(\bG_a)_{\leq p^{r+1}-1}$
fitting in the commutative square
\begin{equation}
\label{eqn:comm-split}
 \xymatrixcolsep{5pc}\xymatrix{ 
\cO(\bG_a)_{\leq p^{r+1}-1} \ar[r] \ar[d]^{s_r} & \cO(\bG_{a(r+1)})  \ar[d] \\
\cO(\bG_a)_{\leq p^r-1} \ar[r]  & \cO(\bG_{a(r)}) 
}
\end{equation}
\item
For a $\bG_a$-module $M$ and each $r > 0$, let $M_r \subset M$ denote the subset consisting
of elements $m \in M$ such that the action of $\varinjlim_{s>r} k\bZ/p^{\times s}$ on $m$
factors through the projection $\varinjlim_{s>r} k\bZ/p^{\times s} \to k\bZ/p^{\times r}$.
Then the $\cO(\bG_a)_{\leq p^r-1}$-subcomodule $M_{\cO(\bG_a)_{\leq p^r-1}} \subset M$ is the
pull-back of  $M_r \subset M$ along the isomorphism 
$\cO(\bG_a)_{\leq p^r-1} \stackrel{\sim}{\to} (k\bZ/p^{\times r})^*$ of (\ref{eqn:comm-coalg}).  
 \end{enumerate}
\end{prop}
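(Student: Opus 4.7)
The plan is to prove the three assertions---commutativity of the square (\ref{eqn:comm-coalg}) with horizontal isomorphisms, existence of the splitting $s_r$ in part (1), and the identification $M_{\cO(\bG_a)_{\leq p^r-1}} = M_r$ in part (2)---by working with the monomial basis $\{t^i\}$ throughout and exploiting Lucas's theorem on binomial coefficients modulo $p$.

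To set up the horizontal isomorphisms, I would define $\cO(\bG_a)_{\leq p^r-1} \stackrel{\sim}{\to} \cO(\bG_{a(r)}) = k[t]/t^{p^r}$ as the restriction of the quotient map; it is a linear bijection on the shared basis $\{1, t, \ldots, t^{p^r-1}\}$ and preserves coproducts because $\Delta(t^i) = \sum_j \binom{i}{j} t^j \otimes t^{i-j}$ lies in $\cO(\bG_a)_{\leq p^r-1}^{\otimes 2}$ whenever $i < p^r$. For the second iso $\cO(\bG_{a(r)}) \stackrel{\sim}{\to} (k\bZ/p^{\times r})^*$, I would build a coalgebra iso $(k[t]/t^p)^{\otimes r} \stackrel{\sim}{\to} k[t]/t^{p^r}$ by sending the generator $t_i$ of the $i$-th tensor factor to $t^{p^{i-1}}$---a coalgebra map precisely because Lucas gives $\binom{n}{j} \equiv \prod_i \binom{a_i}{b_i} \pmod p$ for $n = \sum a_i p^{i-1}$---then compose with the factor-wise iso $(k\bZ/p)^* \stackrel{\sim}{\to} k[t]/t^p$ sending the primitive function $\tau \colon g^j \mapsto j$ to $t$. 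Commutativity of (\ref{eqn:comm-coalg}) follows by tracing: the natural surjection $\cO(\bG_{a(r+1)}) \twoheadrightarrow \cO(\bG_{a(r)})$ becomes, under these identifications, augmentation of the last tensor factor of $(k[t]/t^p)^{\otimes r+1}$, which dualizes to the inclusion $k\bZ/p^{\times r} \hookrightarrow k\bZ/p^{\times r+1}$ given by $x \mapsto x \otimes 1$.

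For part (1), I would set $s_r(t^i) = t^i$ for $0 \leq i < p^r$ and $s_r(t^i) = 0$ for $p^r \leq i < p^{r+1}$, which manifestly splits the inclusion and makes (\ref{eqn:comm-split}) commute on basis elements. Verifying that $s_r$ is a coalgebra map reduces to showing $(s_r \otimes s_r)\Delta(t^i) = 0$ for $i \geq p^r$: by Lucas, any $\binom{i}{j} \not\equiv 0 \pmod p$ forces the $r$-th base-$p$ digit of $j$ either to vanish or to equal the corresponding digit of $i$ (which is itself $\geq 1$ at some position $\geq r$), whence either $j \geq p^r$ or $i - j \geq p^r$ and one tensor factor is killed.

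For part (2), Proposition \ref{prop:advantages}(2) identifies $M_{\cO(\bG_a)_{\leq p^r-1}}$ as $\{m : \Delta_M(m) \in M \otimes \cO(\bG_a)_{\leq p^r-1}\}$; writing $\Delta_M(m) = \sum_{i \geq 0} m_i \otimes t^i$ this is exactly $m_i = 0$ for all $i \geq p^r$. On the other side, under the isos of the preceding part the projection $k\bZ/p^{\times s} \twoheadrightarrow k\bZ/p^{\times r}$ corresponds to the surjection $k\bG_{a(s)} \twoheadrightarrow k\bG_{a(r)}$ dual to the coalgebra inclusion $\cO(\bG_a)_{\leq p^r-1} \hookrightarrow \cO(\bG_a)_{\leq p^s-1}$, and its kernel is spanned by the dual basis vectors $\delta_i$ for $p^r \leq i < p^s$; since $\delta_i$ acts on $m$ by $\delta_i \cdot m = m_i$, the $M_r$-condition that this kernel annihilate $m$ for every $s > r$ is exactly $m_i = 0$ for all $i \geq p^r$, matching the identification above. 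The main obstacle I foresee is not depth but bookkeeping: Lucas's theorem does essentially all the algebra, but keeping track of which arrows in (\ref{eqn:comm-coalg}) point up versus down, and how the identifications dualize consistently across the three coalgebras in each row, requires care.
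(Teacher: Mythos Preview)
Your proposal is correct and follows essentially the same route as the paper, though with more explicit computation. For part (1), the paper simply says the splitting $s_r$ is ``determined'' by transporting the quotient $\cO(\bG_{a(r+1)}) \twoheadrightarrow \cO(\bG_{a(r)})$ back through the horizontal isomorphisms of (\ref{eqn:comm-coalg}); your explicit formula $s_r(t^i)=t^i$ or $0$ is exactly this map, and your Lucas verification that $(s_r\otimes s_r)\Delta(t^i)=0$ for $i\geq p^r$ makes honest what the paper takes for granted. For part (2), the paper argues via the distribution algebra $k\bG_a \simeq k[u_0,u_1,\ldots]/(u_s^p)$, where $u_s$ acts by reading off the coefficient of $t^{p^s}$ in $\Delta_M(m)$: it then characterizes $M_{\leq p^r-1}$ as those $m$ on which $u_n$ acts trivially for $n\geq r$, and matches this with $M_r$. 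You instead use the full dual basis $\{\delta_i\}$ and check that the kernel of $k\bG_{a(s)} \twoheadrightarrow k\bG_{a(r)}$ is spanned by the $\delta_i$ with $i\geq p^r$. Both arguments are correct; the paper's is marginally slicker because it only tests the algebra generators $u_n$ (and implicitly uses that any $\delta_i$ with $i\geq p^r$ is a product involving some $u_n$, $n\geq r$), whereas yours trades that structural observation for a direct linear check. Your anticipated ``bookkeeping obstacle'' is real but you have handled it.
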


\begin{proof}
The left commutative square of (\ref{eqn:comm-coalg}) together with the fact that the
its horizontal map is an isomorphism determines the coalgebra splitting $s_r$ of
(\ref{eqn:split}) which fits in the commutative diagram (\ref{eqn:comm-split}).

The distribution algebra $k\bG_a$ is the polynomial algebra on countably infinitely
many $p$-nilpotent generators $k[u_0,u_1,\ldots,u_n \ldots]/(\{ u_i^p\})$.  The action of 
$u_r$ on an $\cO(\bG_a) = k[t]$-comodule $M$ with coproduct $\Delta_M: M \to M\otimes k[t]$
sends $m \in M$ to $(id_M\otimes \gamma_{p^r})(\Delta_M)(m))$, where $\gamma_{p^r}: k]t] \to k$
reads off the coefficient of $t^{p^r}$ of each $f (t) \in k[t]$.  Thus, $M_{\leq p^r-1} \subset M$
consists of those elements $m \in M$ such that $u_n$ acts trivially for $n \geq r$.   Indexing
the factors of $\bZ/p^{\times r}$ as $\prod_{s= 0 }^r \bZ/p$, we conclude that the pull-back
of $M_r \subset M$ along the isomorphism of coalgebras 
$\cO(\bG)_{\leq p^r-1} \stackrel{\sim}{\to} (k\bZ/p^{\times r})^*$ equals  $M_{\leq p^r-1}$.
\end{proof}

\vskip .1in

Proposition \ref{prop:comm-coalg}(2) together with the commutativity of (\ref{eqn:comm-coalg})
easily implies the following corollary
which gives a concrete description of $\bG_a$-modules in terms of a sequence of 
modules for elementary abelian $p$-groups.

\begin{cor}
\label{cor:Mr}
The isomorphisms of  (\ref{eqn:comm-coalg}) determine a 1-1 correspondence between 
the data of a filtered $\bG_a$-module $M = \varinjlim_r M_{\leq p^r-1}$ 
and the data of a sequence 
$M_1 \subset \cdots \subset M_r \subset M_{r+1} \subset \cdots$ of $k$ vector spaces 
with each $M_r$ equipped with an action of $k\bZ/p^{\times r}$ such that 
$M_r \subset M_{r+1}$ consists of those $m \in M_{r+1}$ on which the action of $k\bZ/p^{\times r+1}$
factors through the projection onto the first $r$ factors, $k\bZ/p^{\times r+1} \twoheadrightarrow
k\bZ/p^{\times r}$ inducing the action of $k\bZ/p^{\times r}$ on $M_r$.
\end{cor}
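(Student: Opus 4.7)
The plan is to construct two mutually inverse maps between the data of a $\bG_a$-module $M$ and that of a compatible sequence of elementary abelian group representations, leveraging Proposition \ref{prop:comm-coalg}(2) and the commutative diagram \eqref{eqn:comm-coalg} throughout.

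For the forward direction, starting from a $\bG_a$-module $M$, I would set $M_r := M_{\cO(\bG_a)_{\leq p^r-1}}$ and invoke Proposition \ref{prop:comm-coalg}(2) to identify each $M_r$ as a $k\bZ/p^{\times r}$-module via the coalgebra isomorphism $\cO(\bG_a)_{\leq p^r-1} \simeq (k\bZ/p^{\times r})^*$. The containment $M_r \subset M_{r+1}$ is inherited from the filtration, and the exhaustion $M = \bigcup_r M_r$ is supplied by Proposition \ref{prop:exhaust-X} applied to the ascending, converging sequence $\{\cO(\bG_a)_{\leq p^r-1}\}$. The compatibility condition---that the $k\bZ/p^{\times r+1}$-action on elements of $M_r$ factors through the projection $k\bZ/p^{\times r+1} \twoheadrightarrow k\bZ/p^{\times r}$---follows by combining the right square of \eqref{eqn:comm-coalg} with the intrinsic description in Proposition \ref{prop:comm-coalg}(2) of $M_r$ as those $m \in M$ on which $\varinjlim_{s > r} k\bZ/p^{\times s}$ acts through $k\bZ/p^{\times r}$.

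For the backward direction, starting from a compatible sequence $M_1 \subset M_2 \subset \cdots$, I would set $M := \bigcup_r M_r$ and equip each $M_r$ with a $\cO(\bG_a)_{\leq p^r-1}$-comodule structure $\Delta_r \colon M_r \to M_r \otimes \cO(\bG_a)_{\leq p^r-1}$ dual to the given $k\bZ/p^{\times r}$-action. The content to verify is that these coactions assemble into a single coaction $\Delta \colon M \to M \otimes \cO(\bG_a)$, which requires $\Delta_{r+1}$ to restrict on $M_r \subset M_{r+1}$ to $\Delta_r$ post-composed with the inclusion $\cO(\bG_a)_{\leq p^r-1} \hookrightarrow \cO(\bG_a)_{\leq p^{r+1}-1}$.

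This gluing step is where the real work lies, and I would resolve it by dualizing. The hypothesis that the $k\bZ/p^{\times r+1}$-action on $m \in M_r \subset M_{r+1}$ factors through the first-$r$-factor projection says, dually, that $\Delta_{r+1}(m)$ is annihilated by any functional in $(k\bZ/p^{\times r+1})^*$ vanishing on $(k\bZ/p^{\times r})^*$; in other words, $\Delta_{r+1}(m)$ lies in the subspace $M_r \otimes \cO(\bG_a)_{\leq p^r-1}$ and agrees there with $\Delta_r(m)$, the coalgebra splitting $s_r$ of \eqref{eqn:split} providing the algebraic bridge via \eqref{eqn:comm-split}. Once this is established, both compositions of the two constructions are identity maps: starting from $M$, Proposition \ref{prop:comm-coalg}(2) recovers each $M_{\leq p^r-1}$ exactly as $M_r$; starting from $\{M_r\}$, the coaction constructed on $M = \bigcup_r M_r$ has $r$-th filtration piece $M_{\leq p^r-1} = M_r$ by the same proposition.
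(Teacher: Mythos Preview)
Your proposal is correct and follows the same line as the paper, which does not give an independent proof but simply remarks (in the sentence preceding the corollary) that Proposition~\ref{prop:comm-coalg}(2) together with the commutativity of \eqref{eqn:comm-coalg} easily implies the result. Your argument is a faithful unpacking of that claim: the forward direction is exactly Proposition~\ref{prop:comm-coalg}(2), and your gluing step in the backward direction is precisely where the commutativity of \eqref{eqn:comm-coalg} (and the splitting \eqref{eqn:split}) enters.
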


\vskip .1in

Combining Proposition \ref{prop:comm-coalg} and Corollary \ref{cor:Mr}, we obtain the following
proposition.

\begin{prop}
\label{prop:Pi-inj}
Retain the notation of Proposition \ref{prop:comm-coalg}.  Then the the map \\
$\Pi(\bZ/p^{\times r}) \to \Pi(\bZ/p^{\times r+1})$ induced by the embedding of finite groups
$\bZ/p^{\times r} \hookrightarrow \bZ/p^{\times r+1}$ which sends $(a_1,\ldots,a_r)$ to $(a_1,\ldots,a_r,0)$
restricts to 
\begin{equation}
\label{eqn:restrict-Pi}
\Pi(\bZ/p^{\times r})_{M_r} \  = \ \Pi(\bZ/p^{\times r+1})_{M_{r+1}} \cap \Pi(\bZ/p^{\times r})
\ \hookrightarrow \ \Pi(\bZ/p^{\times r+1})_{M_{r+1}}.
\end{equation}

Moreover, there is a commutative square 
\begin{equation}
\label{eqn:comm-split2}
 \xymatrixcolsep{5pc}\xymatrix{ 
 \Pi(\bG_{(r)})_{M_{< p^{r}-1}} \ar[r]^\simeq \ar[d] & \Pi(\bZ/p^{\times r})_{M_{r}} \ar[d]\\
 \Pi(\bG_{(r+1)})_{M_{< p^{r+1}-1}} \ar[r]^\simeq  & \Pi(\bZ/p^{\times r+1})_{M_{r+1}} 
}
\end{equation}
whose horizontal bijections are given by Proposition \ref{prop:comm-coalg}(2) and 
whose left vertical arrow is the composition of the map $\Pi(\bG_{(r)})_{M_{< p^{r}-1}} 
\ \to \ \Pi(\bG_{(r+1)})_{M_{< p^{r}-1}} $ associated to the splitting $s_r$ of (\ref{eqn:split}) 
followed by the map
$\Pi(\bG_{(r+1)})_{M_{< p^{r}-1}}  \ \to \  \Pi(\bG_{(r+1)})_{M_{< p^{r+1}-1}} $
enabled by (\ref{eqn:restrict-Pi}).
\end{prop}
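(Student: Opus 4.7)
The plan is to deduce both assertions from Proposition \ref{prop:comm-coalg}(2) and Corollary \ref{cor:Mr}, which transport the statements from the coalgebra side (comodules for $\cO(\bG_a)_{\leq p^r-1}$) to the group-algebra side (modules over $k\bZ/p^{\times r}$ via $\cO(\bG_a)_{\leq p^r-1} \simeq (k\bZ/p^{\times r})^*$), and then to invoke the naturality of $\pi$-point supports along subgroup embeddings of elementary abelian $p$-groups.

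For (\ref{eqn:restrict-Pi}), I would first apply the standard naturality of $\pi$-point supports along the group inclusion $\iota: \bZ/p^{\times r} \hookrightarrow \bZ/p^{\times r+1}$: for any $k\bZ/p^{\times r+1}$-module $N$ one has $\Pi(\bZ/p^{\times r})_{\iota^*N} = \Pi(\bZ/p^{\times r}) \cap \Pi(\bZ/p^{\times r+1})_N$. Taking $N = M_{r+1}$, it remains to establish the equality $\Pi(\bZ/p^{\times r})_{M_r} = \Pi(\bZ/p^{\times r})_{\iota^*M_{r+1}}$. The inclusion $\subseteq$ is immediate from $M_r \subset \iota^*M_{r+1}$ (which is part of Corollary \ref{cor:Mr}). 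For $\supseteq$, let $\psi_{r+1}$ denote the $p$-nilpotent endomorphism of $M_{r+1}$ induced by the last generator of $k\bZ/p^{\times r+1}$; since $\psi_{r+1}$ commutes with the action of $k\bZ/p^{\times r}$ under $\iota$, the kernel filtration $K_i = \ker\psi_{r+1}^i$ is a $k\bZ/p^{\times r}$-equivariant filtration of $\iota^*M_{r+1}$ with $K_1 = M_r$ and $K_p = \iota^*M_{r+1}$. Applying $\psi_{r+1}^i$ embeds the subquotient $K_{i+1}/K_i$ as a $k\bZ/p^{\times r}$-submodule of $M_r$, so each successive quotient has support contained in $\Pi(\bZ/p^{\times r})_{M_r}$. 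Induction on $i$ using the two-out-of-three property of $\pi$-point supports then yields $\Pi(\bZ/p^{\times r})_{\iota^*M_{r+1}} \subseteq \Pi(\bZ/p^{\times r})_{M_r}$.

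For the commutative square (\ref{eqn:comm-split2}), the horizontal bijections come from Proposition \ref{prop:comm-coalg}: the chain of coalgebra isomorphisms $\cO(\bG_a)_{\leq p^r-1} \stackrel{\sim}{\to} \cO(\bG_{a(r)}) \stackrel{\sim}{\to} (k\bZ/p^{\times r})^*$ identifies $M_{\leq p^r-1}$ with $M_r$ as a $k\bZ/p^{\times r}$-module, and similarly after replacing $r$ with $r+1$. Dualizing the coalgebra splitting $s_r$ of (\ref{eqn:split}) yields exactly the Hopf algebra inclusion $k\bZ/p^{\times r} \hookrightarrow k\bZ/p^{\times r+1}$, which is also the inclusion inducing the right vertical arrow via (\ref{eqn:restrict-Pi}). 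Commutativity of (\ref{eqn:comm-split2}) then reduces to a diagram chase verifying that the two paths from the top left to the bottom right of the square agree under the horizontal identifications with the group-algebra side and hence induce the same map on support varieties.

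The main obstacle will be handling the filtration argument for the (possibly) infinite dimensional module $M_{r+1}$. One must verify that the $\pi$-point formalism of \cite{F23} is well behaved for the kernel filtration of a commuting $p$-nilpotent operator and that the two-out-of-three property holds across an ascending filtration of at most $p$ steps; both facts ultimately rest on the coincidence of projective and free modules over $k[t]/(t^p)$ together with the splitting of extensions of projectives by projectives over this self-injective algebra, but the bookkeeping across the identifications of Proposition \ref{prop:comm-coalg} requires care.
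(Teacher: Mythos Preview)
Your argument for (\ref{eqn:restrict-Pi}) relies twice on the claim that $\pi$-point support is monotone under passage to submodules: once when you say $\subseteq$ is ``immediate from $M_r \subset \iota^*M_{r+1}$'', and again when you infer that each subquotient $K_{i+1}/K_i$, embedding in $M_r$, has support inside $\Pi(\bZ/p^{\times r})_{M_r}$.  That monotonicity is false: the trivial module $k$ sits inside the regular module $kG$, yet $\Pi(G)_k = \Pi(G)$ while $\Pi(G)_{kG} = \emptyset$.  The two-out-of-three property only gives $\Pi(G)_N \subset \Pi(G)_M \cup \Pi(G)_{M/N}$ for $N\subset M$, which is not the containment you invoke.

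Worse, the equality (\ref{eqn:restrict-Pi}) is false as stated.  Take $p=2$, $r=1$, and let $M$ be the $4$-dimensional $\bG_a$-module $B^{\oplus 2}$ with $B=k[u_0]/(u_0^2)$, where $u_1$ acts by $\bigl(\begin{smallmatrix}0 & u_0\\ 0 & 0\end{smallmatrix}\bigr)$ and $u_i=0$ for $i\ge 2$.  Then $M_2=M$ is free over $B$, so $\Pi(\bZ/2\times\bZ/2)_{M_2}\cap\Pi(\bZ/2)=\emptyset$; but $M_1=\ker(u_1)=B\oplus u_0B\cong B\oplus k$ is not free over $B$, so $\Pi(\bZ/2)_{M_1}$ is the whole one-point space.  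A $3$-dimensional example with basis $a,b,c$ and $u_0a=b$, $u_1c=b$ (all other actions zero) defeats the opposite inclusion: there $M_1=\langle a,b\rangle\cong B$ is free while $M_2$ has odd dimension.  The paper's own argument asserts $(i_r\circ\alpha)^*((M_{r+1})_K)=\alpha^*((M_r)_K)$, which conflates $\iota^*M_{r+1}$ with its proper submodule $M_r$ and already fails at the level of underlying vector spaces; so the defect lies in the statement rather than in your strategy.
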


\begin{proof}
Let $\alpha: K[t]/t^p \to K\bZ/p^{\times r}$ be a flat map representing a $\pi$-point in $\Pi(\bZ/p^{\times r}).$
Than $\alpha$ represents a $\pi$ point of $\Pi(\bZ/p^{\times r})_{M_r}$ if and only if 
$\alpha^*((M_r)_K)$ is not a free $K[t]/t^p$-module.  The composition of $\alpha$ with 
the embedding $i_r: K\bZ/p^{\times r} \hookrightarrow K\bZ/p^{\times r+1}$ has the property that
$(i_r \circ \alpha)^*(M_{r+1})_K) = \alpha^*((M_r)_K)$.  Thus, $i_r \circ \alpha$ represents 
a $\pi$-point in $\Pi(\bZ/p^{\times r})_{M_{r+1}}$ if and only if  $\alpha$ represents a $\pi$ point of 
$\Pi(\bZ/p^{\times r})_{M_r}$

The verification of the commutativity of the square (\ref{eqn:comm-split2}) follows from the definitions
of the maps involved.
\end{proof}

\vskip .1in

The following criterion for injectivity for a $\bG_a$-module might possibly permit a generalization
linear algebraic groups other than $\bG = \bG_a$.

\begin{thm}
\label{thm:Ga-detect}
Retain the notation of Proposition \ref{prop:comm-coalg}. Then the following are equivalent.
\begin{enumerate}
\item
$M$ is an injective $\bG_a$-module $M$.
\item
$\varinjlim_r \Pi(\bG_{(r)})_{M_{\cO(\bG_a)_{\leq p^r-1}}}$ is empty.
\item
$\varinjlim_r \Pi(\bZ/p^{\times r})_{M_r}$ is empty.
\end{enumerate}
\end{thm}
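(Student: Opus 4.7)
The plan is to chain three equivalences using results already developed in the paper. The sequence of finite-dimensional subcoalgebras $X_r := \cO(\bG_a)_{\leq p^r-1} \subset \cO(\bG_a)=k[t]$ is ascending and converging, and by Proposition~\ref{prop:comm-coalg} each $X_r$ is isomorphic as a coalgebra to $\cO(\bG_{a(r)})$. This lets me reduce the infinite-dimensional injectivity question for $M$ to a finite-level question on each $M_{X_r}$.

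For (1) $\Leftrightarrow$ (2), the first ingredient is Proposition~\ref{prop:X-injectivity} applied to $\{X_r\}$: $M$ is injective as a $\bG_a$-module iff $M_{X_r}$ is injective in $CoMod(\cO(\bG_a)_{\leq p^r-1})$ for every $r \geq 1$. The coalgebra isomorphism $\cO(\bG_a)_{\leq p^r-1} \xrightarrow{\sim} \cO(\bG_{a(r)})$ of Proposition~\ref{prop:comm-coalg} identifies $CoMod(\cO(\bG_a)_{\leq p^r-1})$ with $Mod(\bG_{a(r)})$, endowing $M_{X_r}$ with an intrinsic $\bG_{a(r)}$-module structure. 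The second ingredient is the $\pi$-point detection of injectivity for infinitesimal group schemes (the Suslin--Friedlander--Bendel theorem for $\bG_{a(r)}$, in the $\pi$-point formulation of \cite{F23}): a $\bG_{a(r)}$-module $N$ is injective iff $\Pi(\bG_{(r)})_N = \emptyset$. Combining these, (1) holds iff $\Pi(\bG_{(r)})_{M_{X_r}}$ is empty for every $r$. Finally, by Proposition~\ref{prop:Pi-inj} the transition maps $\Pi(\bG_{(r)})_{M_{X_r}} \hookrightarrow \Pi(\bG_{(r+1)})_{M_{X_{r+1}}}$ are injections, so the filtered colimit is the union of this nested system of sets and is empty iff each stage is empty. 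This yields (1) $\Leftrightarrow$ (2).

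The equivalence (2) $\Leftrightarrow$ (3) is then read off directly from the commutative square (\ref{eqn:comm-split2}) of Proposition~\ref{prop:Pi-inj}: its horizontal maps are bijections compatible with the vertical transition maps in the two colimit systems, so taking colimits produces a bijection $\varinjlim_r \Pi(\bG_{(r)})_{M_{X_r}} \simeq \varinjlim_r \Pi(\bZ/p^{\times r})_{M_r}$, and one is empty precisely when the other is.

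The one point requiring care is ensuring in the reduction step that the $\bG_{a(r)}$-module structure on $M_{X_r}$ produced by the coalgebra isomorphism is the one against which the support $\Pi(\bG_{(r)})_{M_{X_r}}$ is defined, so that the detection theorem applies without ambiguity. Once that compatibility is spelled out---which is essentially built into Proposition~\ref{prop:comm-coalg}(2) together with the definition of $(-)_{X_r}$---the rest of the argument is bookkeeping that assembles the cited results.
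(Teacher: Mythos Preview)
Your proof is correct and follows essentially the same approach as the paper: both combine Proposition~\ref{prop:X-injectivity} with the coalgebra identifications of Proposition~\ref{prop:comm-coalg}, the $\pi$-point detection of injectivity for finite group schemes, and the injectivity of the transition maps from Proposition~\ref{prop:Pi-inj}, then read off the remaining equivalence from the commutative square~(\ref{eqn:comm-split2}). The only cosmetic difference is that the paper establishes $(1)\Leftrightarrow(3)$ first and then $(2)\Leftrightarrow(3)$, whereas you do $(1)\Leftrightarrow(2)$ first; you are also more explicit about invoking the detection theorem, which the paper leaves implicit.
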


\begin{proof}
By Proposition \ref{prop:X-injectivity} and Proposition \ref{prop:advantages}, $M$ is injective 
if and only if $M_{\cO(\bG_a)_{\leq p^r-1}}$ is
an injective object of $CoMod(\cO(\bG_a)_{\leq p^r-1})$ for all $r > 0$.  
The injectivity assertion of Proposition \ref{prop:Pi-inj} together with Proposition
(\ref{prop:comm-coalg}) implies the equivalence of conditions (1) and (3).
The equivalence of (2) and (3) now follows from (\ref{eqn:comm-split2}).
\end{proof}

\vskip .2in


\section{Questions}
\label{sec:questions}

We mention a few of the many questions which arise when considering 
mock injective $\bG$-modules.
\vskip .1in

\begin{question}
\label{ques:fundamental}
Is there a useful ``geometric invariant" for mock $\bG$-modules which complements support varieties
by detecting injectivity of $\bG$-modules?  (See Theorem \ref{thm:Ga-detect} above.)
\end{question}

\vskip .1in
\begin{question}
\label{ques:Hoch}
For $\bU$ a unipotent linear algebraic group, can we utilize ascending, converging sequences
of subcoalgebras of $\cO(\bU)$ to enable computations
of the Hochschild cohomology of $\bU$?  (See \cite{F19}.)
\end{question}

\vskip .1in

\begin{question}
\label{ques:new-mocks}
If $M$ is a cofinite mock injective $\bG$-modules, does $M$ admit a filtration by 
mock injective $\bG$-modules which are obtained by inducing $H$-modules to $\bG$
for finite subgroups $H \hookrightarrow \bG$?
\end{question}

\vskip .1in
\begin{question}
\label{ques:quotient-mock}
Under what hypotheses does a cofinite mock injective $\bG$-module $J$ admit an embedding
into an injective $\bG$-module $I$ with quotient $I/J$ which is also cofinite?
\end{question}

\vskip .1in
\begin{question}
\label{ques:test}
Let $\bG$ be semi-simple, simply connected algebraic group defined over $\bF_p$.  If $J$ is 
a mock injective $\bG$-module whose restriction to each $\bG(\bF_q)$ is injective, then is $J$
an injective $\bG$-module?
\end{question}

\vskip .2in


\end{document}